\newtheorem{theorem}{Theorem}[section]
\newtheorem{lemma}[theorem]{Lemma}
\newtheorem{proposition}[theorem]{Proposition}
\numberwithin{equation}{section}
\theoremstyle{definition}
\newtheorem{definition}[theorem]{Definition}
\newtheorem{example}[theorem]{Example}
\newtheorem{remark}[theorem]{Remark}
\newenvironment{manualdefinition}[1]{%
  \manualdefinitioninner
}{\endmanualdefinitioninner}
\def\leq{\leqslant }
\def\geq{\geqslant}
\def\liftl{\vrule depth0pt height11pt width0pt }
\newcommand{\@minipagerestore}{\setlength{\parskip}{\medskipamount}
\setlength{\parindent}{12pt}}
\begin{document}

\title[Phase transition and difference of classical spectra]{Hausdorff dimension
of Gauss--Cantor sets and two applications to classical Lagrange and Markov
spectra} 

\author[C. Matheus, C. G. Moreira, M. Pollicott and P. Vytnova]{Carlos Matheus,
Carlos Gustavo Moreira, Mark Pollicott and Polina Vytnova} 

\address{C. Matheus, CNRS \& \'Ecole Polytechnique, CNRS (UMR 7640), 91128, Palaiseau, France.}
\email{matheus.cmss@gmail.com}

\address{C. G. Moreira, School of Mathematical Sciences, Nankai University, Tianjin 300071, P. R.China, and IMPA, Estrada Dona Castorina 110, CEP 22460-320, Rio de Janeiro, Brazil.}
\email{gugu@impa.br}

\address{M. Pollicott, Department of Mathematics, Warwick University, Coventry,
   CV4 7AL, UK.}
\email{masdbl@warwick.ac.uk}

 \address{P. Vytnova, Department of Mathematics, Warwick University, Coventry,
   CV4 7AL, UK.}
\email{P.Vytnova@warwick.ac.uk}

\thanks{The second author is partly supported by CNPq and FAPERJ. The third author is partly supported by ERC-Advanced Grant 833802-Resonances and EPSRC grant
EP/T001674/1. The fourth author is partly supported by EPSRC grant EP/T001674/1. }

\date{\today}

\begin{abstract}
    This paper is dedicated to the study of two famous subsets of
    the real line, namely Lagrange spectrum~$L$ and Markov spectrum~$M$.
    Our first result, Theorem~\ref{t.A},
    provides a rigorous estimate on the smallest value~$t_1$ 
    such that the portion of the Markov spectrum~$(-\infty,t_1)\cap M$ has Hausdorff
    dimension~$1$. Our second result, Theorem~\ref{t.B}, gives a new upper bound on the Hausdorff
    dimension of the set difference~$M\setminus L$.
    In addition, we also give a plot of the dimension function, which
    hasn't appeared previously in the literature to our knowledge. 

 Our method combines new facts about the structure of the
 classical spectra together with  
finer estimates on the Hausdorff dimension of Gauss--Cantor sets of continued
fraction expansions whose entries satisfy appropriate restrictions. 
\end{abstract}\maketitle


\section{Introduction}
\label{s:intro}
The theory of Diophantine approximations begun with the search
for rational approximations to the solutions of certain algebraic equations
(e.g., $x^2-2=0$, $x^2-x-1=0$, etc.) and well-known mathematical constants
(e.g., $\pi=3.14159265\dots$). Besides its intrinsic beauty, this topic
attracted the attention of several generations of mathematicians thanks to its
deep connections with many other areas including Kolmogorov--Arnold--Moser (KAM)
theory of quasi-periodic motions for Hamiltonian systems (cf. Siegel and Moser
books~\cite{SiegelMoser},~\cite{Moser}) and the spectral theory of certain
quasi-periodic Schr\"odinger operators (cf. Avila--Jitomirskaya's
solution~\cite{AJ} to the ``ten martini problem''). 

The investigation of Diophantine approximations often leads to
the study of the smallest values of quadratic forms on lattices (i.e., the
so-called \emph{geometry of numbers}): for instance, if $\alpha\in\mathbb{R}$
and $p/q\in\mathbb{Q}$, then $|\alpha-p/q| = q^{-2} |h_{\alpha}(p,q)|$ where
$h_{\alpha}(p,q):=q^2\alpha- pq\in h_{\alpha}(\mathbb{Z}^2)$. 
In 1841, Dirichlet used his famous pigeonhole principle to show
that if $\alpha\in\mathbb{R}\setminus\mathbb{Q}$, then
$\#\{p/q\in\mathbb{Q}:|q^2\alpha-pq|<1\}=\infty$, and, subsequently, Hurwitz
improved upon Dirichlet's theorem by proving that if
$\alpha\in\mathbb{R}\setminus\mathbb{Q}$, then
$\#\{p/q\in\mathbb{Q}:|q^2\alpha-pq|<1/\sqrt{5}\}=\infty$, but
$\#\{p/q\in\mathbb{Q}:|q^2\left(\frac{1+\sqrt{5}}{2}\right)-pq|<1/(\sqrt{5}+\varepsilon)\}<\infty$
for all $\varepsilon>0$. More generally, 
it was realised that the finite ``best constants'' of
Diophantine approximations for real numbers and real indefinite binary quadratic
forms are encoded by the \emph{Lagrange spectrum}  
$$L:=\left\{\limsup\limits_{\substack{p,q\to\infty \\ p\in\mathbb{Z},
q\in\mathbb{N}}}\frac{1}{|q^2\alpha-pq|}<\infty: \alpha\in\mathbb{R}\right\}$$ 
and the \emph{Markov spectrum} 
$$M:=\left\{\sup\limits_{\substack{(p,q)\in\mathbb{Z}^2 \\
(p,q)\neq(0,0)}}\frac{1}{|ap^2+bpq+cq^2|}<\infty: ax^2+bxy+cy^2 \textrm{ real
indefinite, }b^2-4ac=1\right\}.$$  

In 1879--1880, Markov~\cite{Mar79},~\cite{Mar80} performed the
first systematic study of the Lagrange and Markov spectra: in particular, he
showed that 
$$
L\cap[\sqrt{5},3) = M\cap[\sqrt{5},3)=\left\{\sqrt{9-4/z_n^2}:n\in\mathbb{N}\right\}
$$ 
where $z_n\in\mathbb{N}$ are \emph{Markov numbers}, i.e., the largest
coordinates of a triple $(x_n,y_n,z_n)\in\mathbb{N}^3$ satisfying the
Markov--Hurwitz cubic equation  
$$
x_n^2+y_n^2+z_n^2=3x_ny_nz_n.
$$ 
\begin{remark} Zagier~\cite{Zagier} showed in 1982 that the set
    of Markov numbers is \emph{sparse} (e.g., $\#\{z_n\leq x:n\in\mathbb{N}\} =
    c(\log x)^2+O(\log x(\log\log x)^2)$), 
    Goldman~\cite{Gol03} observed that the Markov--Hurwitz
    cubic surface is a special example of \emph{character variety}, the Markov
    numbers are known to describe \emph{hyperbolic lengths} of closed geodesics
    on an once-punctured torus~\cite{Mirzakhani}, and the
    reduction modulo $p$ of the Markov--Hurwitz cubic equation leads to an
    interesting family of graphs~\cite{BourgainGamburdSarnak},~\cite{Chen} which
    are conjectured by Bourgain--Gamburd--Sarnak to form an \emph{expander
    family}. 
\end{remark}
In 1921 Perron~\cite{Per21} gave a simple (dynamical)
characterization of the classical spectra in terms of continued fractions. 
Following Perron, see also~\cite{Mor18}, consider the set $(\mathbb N^*)^\mathbb Z$ of bi-infinite sequences
 of elements of $\mathbb{N}^*=\mathbb{N}\setminus\{0\}$. To any element
 $\underline{\alpha} = (\alpha_n)_{n\in\mathbb{Z}} \in (\mathbb N^*)^\mathbb Z$ and each $k\in\mathbb{Z}$, 
we associate a pair of real numbers defined in terms of
continued fraction expansions\footnote{For more details about
the standard relationship between continued fractions, Bernoulli shift, and
Gauss map, see the book of Einsiedler and Ward~\cite{EW}.} 
$$ 
[\alpha_k;\alpha_{k+1},\alpha_{k+2},\ldots]:=\alpha_k+\cfrac{1}{\alpha_{k+1}+\cfrac{1}{\alpha_{k+2}+\cfrac{1}{\ddots}}} 
\qquad \mbox{ and } \qquad 
[0;\alpha_{k-1},\alpha_{k-2},\ldots] = \cfrac{1}{\alpha_{k-1}+\cfrac{1}{\alpha_{k-2}+\cfrac{1}{\ddots}}} ,
$$ 
and consider the map $\lambda_0(\underline{\alpha}): = [\alpha_0;\alpha_{1},\alpha_{2},\ldots] +
[0;\alpha_{-1},\alpha_{-2},\ldots]$. Let us denote by $\sigma$ the Bernoulli
shift on $(\mathbb N^*)^\mathbb Z$ given by $\sigma( (\alpha_n)_{n \in \mathbb
Z} ) = (\alpha_{n+1})_{n\in \mathbb Z}$. The Lagrange value of $\underline{a}$ is the
limit superior of values of $\lambda_0$ along the $\sigma$-orbit of $\underline{a}$: 
$$
\ell(\underline{\alpha}) := \limsup_{n\to \infty} \lambda_0(\sigma^n \underline{a})
=\limsup_{n\to \infty} ([\alpha_n;\alpha_{n+1},\alpha_{n+2},\ldots] +  
[0;\alpha_{n-1},\alpha_{n-2},\ldots])
$$
and the Markov value of~$\underline{\alpha}$ is the supremum of values of $\lambda_0$ along
the $\sigma$-orbit of $\underline{\alpha}$:  
$$
m(\underline{\alpha}) : =  \sup_{n\in \mathbb Z} \lambda_0(\sigma^n \underline{a}) =
\sup_{n\in \mathbb Z}([\alpha_n;\alpha_{n+1},\alpha_{n+2},\ldots] + 
[0;\alpha_{n-1},\alpha_{n-2},\ldots]).
$$ 
In this setting, Perron established that the Lagrange and Markov spectra are the
collections of (finite) Lagrange and Markov values: 
\begin{equation}
    \label{eq:LMdef}
    L := \left\{ \ell(\alpha)\in\mathbb{R} \mid \alpha \in
    (\mathbb N^*)^\mathbb Z \right\} \quad \mbox{ and } \quad
    M := \left\{  m(\alpha)\in\mathbb{R} \mid \alpha \in (\mathbb N^*)^\mathbb Z \right\}. 
\end{equation}
\begin{remark} 
    The values of $n$-ary quadratic forms, $n\geq 3$, can also be studied using
    dynamical ideas: for instance, Margulis famously solved Oppenheim's
    conjecture using higher rank actions on homogenous spaces
    (see~\cite{Margulis} for a nice survey). However, we shall not make further
    comments about this because the techniques in the present paper (inspired
    from \emph{rank one} systems such as the Gauss map and the geodesic flow on
    the modular surface) are fundamentally distinct from the results concerning
    higher rank systems. 
\end{remark}
 
The dynamical result of Perron gives access to many basic
properties of the classical spectra (see, e.g.,~\cite{CF}): for example, it is
known that $L\subset M$ are closed subsets of the real line such that
$\sqrt{12}, \sqrt{13}\in L$ and 
$$
L\cap(\sqrt{12},\sqrt{13})=M\cap(\sqrt{12},\sqrt{13}) = \varnothing.
$$
Moreover, Hall~\cite{Hall47} observed in 1947 that certain portions of $L$ and
$M$ are controlled in terms of the arithmetic sums of Cantor sets of real
numbers whose continued fraction expansions satisfy certain restrictions: for
instance, if $E_4=\{[0;\alpha_1,\alpha_2,\dots]: \alpha_n\in\{1,2,3,4\} \,\,\forall\,
n\in\mathbb{N}\}$, then Hall proved that the arithmetic sum  
$$
E_4+E_4=\{x+y \colon x, y\in E_4\}
$$ 
contains an interval of length $>1$, and this fact was exploited to show that
$[6,\infty)\subset L\subset M$. Actually, since the classical spectra are closed
subsets of the real line, there exists the smallest number~$c_F$ such that $ [c_F, +\infty)\cap L =
[c_F, +\infty)\cap M =  [c_F, +\infty)$: this half-line is usually called
\emph{Hall's ray} in the literature. As it turns out, the value of $c_F$ was
computed explicitly by Freiman in~\cite{Fr75} to be\footnote{All numbers are truncated, not
rounded.}~$c_F=4.5278\dots$ and is called \emph{Freiman's constant}.
\begin{remark} The arithmetic sum $A+B=\{a+b \colon a\in A, b\in B\}$ of $A,
    B\subset \mathbb{R}$ is the image $A+B=\pi(A\times B)$ of the cartesian
    product $A\times B\subset \mathbb{R}^2$ under
    $\pi:\mathbb{R}^2\to\mathbb{R}$, $\pi(x,y)=x+y$. Thus, the results of Hall
    mentioned above point towards a connection between the classical spectra and
    the projections of fractal sets which is going to be relevant in the present
    paper.  
\end{remark}

In contrast to this, the sets $L\cap [3, c]$ and  $M\cap [3, c]$ have a complicated 
and mysterious structure. Nevertheless, some facts have been established. In
particular, it was shown by Hall~\cite{Hall} that  $M\cap [0,
\sqrt{10}]$ has zero Lebesgue measure. A few years later this result was
improved by Pavlova and Freiman~\cite{FP73} (cf.~\cite[Theorem 2, Chapter
6]{CF}), 
when they showed that $M\cap [0, \sqrt{689}/8]$ 
has zero Lebesgue measure\footnote{Note that $\sqrt{689}/8=
3.2811\ldots$ and $\sqrt{10} =  3.162277\dots$}.

More recently, it was shown by the second author in~\cite{Mor18} that for any $t>0$  the sets  
$(-\infty , t]\cap M$ and  $ (-\infty , t] \cap L$ have the same Hausdorff
dimension: 
\begin{equation*}
    \dim_H \left((-\infty , t]\cap M\right)  =\dim_H \left( (-\infty , t] \cap
    L\right)
\end{equation*}
\noindent\begin{minipage}{100mm}
and, moreover, the function 
$$
f(t): = \dim_H \left( (-\infty,t] \cap M)\right)
$$ 
is a continuous non-decreasing function on the real line. 
We now introduce the number which is the subject of our investigations. 

\begin{definition} 
\begin{equation}
    \label{eq:t1def}
    t_1 := \inf\left\{ t\in \mathbb R \mid f(t) = 1 
    \right\}.
\end{equation}
\end{definition}
In view of monotonicity of~$f$ the value~$t_1$
is usually referred to as \emph{the first transition point} of the
classical Lagrange and Markov spectra.
In 1982 Bumby~\cite{Bumby} gave a heuristic estimate  
\begin{equation}\label{e.Bumby-heuristic} 
3.33437 < t_1 < 3.33440,
\end{equation} 
while the results by Hall~\cite{Hall} and the second author~\cite{Mor18} give the best
\emph{rigorous} lower and upper bounds on~$t_1$ to date: 
\begin{equation}
    \label{eq:10-12}
\sqrt{10} = 3.162277\ldots <t_1<\sqrt{12} = 3.464101\dots. 
\end{equation}

Our first result, Theorem~\ref{t.A} confirms Bumby's claim and gives a rigourous estimate of $t_1=3.334384\dots$
The proof is built on ideas developed by Bumby and uses a connection between
Markov values and Gauss--Cantor sets defined in terms of continued fractions of
their elements. 
The argument is computer---assisted and the result could be refined further with 
the method we present, subject to more computer time and resources.  

Using a similar approach, we can also compute a good approximation to
the function~$f$ by solving equations $f(x) = \frac{k}n$ for $ k =
1,2,\ldots n-1$ and $n$ sufficiently large. 
The plot of the resulting function is shown on the right. 
\end{minipage}%
\begin{minipage}{80mm}
  \qquad   \includegraphics{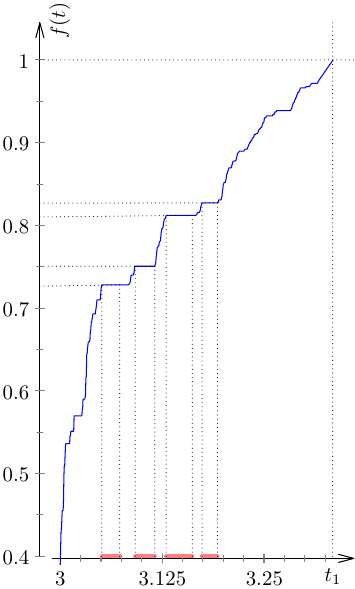}
    \captionof{figure}{A plot of the function~$f$. Pink intervals have been
    identified in~\cite{CF} as the gaps in the Markov spectrum.}
    \label{fig:functionf}
\end{minipage}

Lima and the second author in~\cite{LM} recently
conjectured\footnote{This is motivated by the main results
of~\cite{LM} saying that this conclusion holds in many ``similar'' examples of
dynamical Lagrange and Markov spectra.} that $(t_1, 
t_1+\delta)\cap L $ has non-empty interior for all  $\delta > 0$. Together with
our new result $t_1 = 3.334384\dots$ this would imply, in particular, that
$(3.334384,3.334385)\cap L$ has non-empty interior and thus
would prove an open 
folklore conjecture that the interior of $(-\infty, \sqrt{12})\cap L $ is non-empty. 

The second part of our paper concerns the set difference of the Markov and
Lagrange spectra. It is known that $M\setminus L$ has zero Lebesgue measure.
Furthermore, it was proved
in~\cite{MatheusMoreira} and~\cite{PV20} that the Hausdorff dimension of~$M\setminus L$ satisfies 
\begin{equation*}
0.5312 < \dim_H(M\setminus L) < 0.8823.
\end{equation*}
Our second result, Theorem~\ref{t.B}, shows that the Hausdorff dimension of~$M\setminus L$ has sharper  bounds 
\begin{equation*}
0.537152 < \dim_H(M\setminus L) < 0.796445.
\end{equation*}
The proof is also computer---assisted. Following the approach developed by the first
two authors~\cite{MatheusMoreira}, we use fine-grained combinatorial analysis of
continued fractions to construct a cover 
$M\setminus L$ by arithmetic sums of Gauss--Cantor sets and the so-called
``Cantor sets of the gaps''. We then apply the new method for computing the
Hausdorff dimension 
recently developed by the last two authors~\cite{PV20} to several Gauss--Cantor sets
to obtain sharper upper bounds on~$\dim_H (M \setminus L)$. 

\begin{figure}[htb!]
\includegraphics[width=\textwidth]{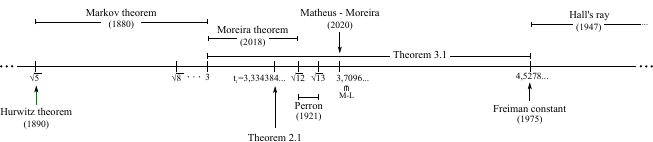}
\caption{Description of several results about the classical spectra (including
our main results, namely Theorems~\ref{t.A} and~\ref{t.B}). }
\label{fig:mainresults}
\end{figure}

 We organize this article as follows. In \S\ref{s.Bumby-improved}, we
 reduce the problem of computing~$t_1$ to the problem of constructing two
 Gauss--Cantor sets~$X$ and~$Y$ such that 
 \begin{equation}
     \label{eq:XY}
 \dim_H X < 0.5 < \dim_H Y, 
 \end{equation}
 and the substrings of $\alpha\in(\mathbb{N}^*)^{\mathbb{Z}}$ with $m(\alpha)$ close to $t_1$ are ``controlled'' by $X$ and $Y$. The conditions that~$X$,~$Y$ and~$t_1$ should jointly satisfy are slightly more subtle and we
 describe them in detail in~\S\ref{ss:aub}.
 Next \S\ref{s.M-L-new} is dedicated to the construction and analysis of the
 arithmetic sums of Gauss--Cantor sets and ``Cantor sets of the gaps'' which
 cover $M\setminus L$, and subsequently allows us to obtain an upper bound on
 $\dim_H (M\setminus L)$. The intricate character of the Gauss--Cantor sets involved in estimates in
 \S\ref{s.Bumby-improved} and~\S\ref{s.M-L-new} means that the algorithm for
 computing the Hausdorff dimension developed in~\cite{PV20} has to be considerably
 adapted and improved. For completeness, in \S\ref{s.PV-new} we explain how the
 Hausdorff dimension of the complicated Gauss--Cantor sets can be
 computed and give some details of the numerical implementation; in
 addition, we also provide pseudocode in the Appendix (with the codes available at https://github.com/Polevita/Gauss\_Cantor\_sets). 
Finally, we discuss in~\S\ref{s.open-problems} a few lines of
future research partly motivated the our main results. 

\begin{remark} On our way to establishing  the results mentioned in the previous
    paragraphs, we encounter some other interesting facts about the structure of
    the classical spectra. For example, Lemma~\ref{l.rational-Lagrange} below 
    says that~$4.5$ is a non-trivial rational point in~$L$ in the sense that
    it occurs after~$3$ and before the beginning $c=4.5278\dots$ of Hall's ray.  
    Hence the value~$4.5$ is realised as the Markov value of two sequences 
    which arise in our study of~$(4.4984,\sqrt{21})\cap(M\setminus L)$. 
\end{remark}

\subsection*{Acknowledgments} 
The authors are thankful to the referee for the valuable comments and
suggestions leading to the current version of this paper. 

\section{Phase transition in classical spectra}\label{s.Bumby-improved}

In this section, we give the theoretical basis for the proof of our first main
result which provides rigorous bounds
on the first transition point~$t_1$ and construct explicitly the relevant
Gauss--Cantor sets. The theoretical background for computer-assisted
calculations which are used to obtain estimates
on Hausdorff dimension of those Gauss--Cantor sets which are constructed here can be found in~\S\ref{s.PV-new}.

\begin{theorem}\label{t.A} $t_1=\inf\{t\in\mathbb{R}:\dim_H((-\infty, t]\cap
    M)=1\}=3.334384\dots$, where this value is accurate to 
    the~$6$ decimal places presented. 
\end{theorem}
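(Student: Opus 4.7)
The plan is to reduce Theorem~\ref{t.A} to the construction of two Gauss--Cantor sets $X$ and $Y$ satisfying~\eqref{eq:XY}, each built from a finite list of ``admissible'' finite words on the natural alphabet, and then to relate them via the Perron characterisation to sequences $\underline{a}\in(\Nset^*)^{\Zset}$ whose Markov value $m(\underline{a})$ lies in a microscopic window around the target value $3.334384\ldots$. Writing $t^{**}:=3.334384$ and $t^{*}:=3.334385$, by monotonicity and continuity of $f$ it suffices to establish the two pinching estimates $f(t^{*})=1$ and $f(t^{**})<1$, whence $t_1\in[t^{**},t^{*}]$, which identifies the first six decimals.

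For the upper bound $t_1\leq t^{*}$ I would construct a Gauss--Cantor set $Y\subset[0,1]$ as the limit set of the iterated function system of inverse branches of the Gauss map restricted to a carefully chosen finite list $\mathcal{B}_Y$ of admissible words over a small alphabet (essentially $\{1,2\}$, with a handful of occurrences of $3$). The list $\mathcal{B}_Y$ must be engineered so that (i) every bi-infinite concatenation of words in $\mathcal{B}_Y$ produces an $\underline{a}$ with $m(\underline{a})\leq t^{*}$, and (ii) $\dim_H Y>1/2$. Condition~(i), via Perron, yields an inclusion of a matchable arithmetic sum $Y+Y\subset(-\infty,t^{*}]\cap M$, while~(ii) together with the classical fact that a Gauss--Cantor set of dimension exceeding $1/2$ has an arithmetic self-sum of full dimension (in fact of positive Lebesgue measure) forces $\dim_H((-\infty,t^{*}]\cap M)=1$.

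For the lower bound $t_1\geq t^{**}$ I would perform the converse combinatorial analysis in the spirit of Bumby: enumerate all finite patterns that can appear inside the continued-fraction expansion of a sequence with $m(\underline{a})\leq t^{**}$, and show that, up to a negligible set of eventually periodic exceptions, every such $\underline{a}$ is a concatenation of words drawn from a second finite list $\mathcal{B}_X$ which defines a Gauss--Cantor set $X$. Then $(-\infty,t^{**}]\cap M$ is contained in the arithmetic sum $X+X$ (again via Perron), so that $\dim_H((-\infty,t^{**}]\cap M)\leq 2\dim_H X$, and the estimate $\dim_H X<1/2$ gives $f(t^{**})<1$.

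The main obstacle is twofold. First, there is a delicate combinatorial task: to refine Bumby's classification to sufficient depth in the continued-fraction tree so that $\mathcal{B}_X$ and $\mathcal{B}_Y$ are sharp enough to pinch the transition within a $10^{-6}$ window; these two lists must agree outside a very narrow band of forbidden words controlled by $t$ in order for the two computations to be compatible. Second, there is a numerical-analytic task: the resulting Gauss--Cantor sets have long admissibility lists, and one must certify \emph{rigorously} that $\dim_H X$ and $\dim_H Y$ lie on opposite sides of $1/2$ with interval widths much smaller than the gap $\dim_H Y-\dim_H X$. This step is what forces the adaptation of the transfer-operator method of~\cite{PV20} detailed in~\S\ref{s.PV-new}, and is where the bulk of the computer-assisted effort is concentrated.
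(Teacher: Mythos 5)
Your proposal follows essentially the same route as the paper: pinch $t_1$ between a threshold $T$ and a value $S$ by constructing two Cantor sets of $\{1,2\}$-continued fractions defined by finite lists of forbidden substrings (Bumby's words plus deeper refinements), certify $\dim_H X<\tfrac12<\dim_H Y$ by a rigorous transfer-operator computation, and conclude via the inclusion $M\cap(\sqrt 5,T)\subset 2+X+X$ for the lower bound and Moreira's inequality $\min\{2\dim_H K,1\}\le\dim_H((\sqrt5,S)\cap M)$ for the upper bound. The only cosmetic discrepancies are that the paper never needs the digit $3$ (Perron already confines everything below $\sqrt{12}$ to $\{1,2\}^{\mathbb Z}$), and that the upper bound rests on the cited dimension inequality from~\cite{Mor18} rather than on a literal containment $Y+Y\subset M$, which would not hold as stated.
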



\subsection{Preliminaries} We begin by describing the basic strategy to deduce bounds
on~$t_1$ which generalizes the approach of Hall. The first Cantor set we
introduce is relatively famous  and consists of all real numbers whose continued
fraction expansion has only digits~$1$ and~$2$: 
$$
E_2 : = \left\{a = [0;\alpha_1,\alpha_2,\dots] \mid \alpha_j \in\{1,2\}, \ j \ge 1
\right\}.
$$
Its Hausdorff dimension has been computed to high precision (see, for
instance~\cite{JP18}, and references therein), and for our purposes it is
sufficient to know that 
\begin{equation}
\label{eq:dimE2}
\dim_H E_2 > 0.53128. 
\end{equation}
In what follows, we identify a subset $A \subseteq E_2$ with a set of one-sided
sequences corresponding to the continued fraction expansions of its elements. 

In the sequel we use a simple observation that the constant bi-infinite sequence $\beta_n \equiv 1$, $n \in
\mathbb Z$, has the minimal Markov value among all bi-infinite sequences $\alpha
\in \{1,2\}^{\mathbb Z}$, and, moreover, for any $\alpha \ne \beta$ we have
$m(\beta) < m(\alpha)$. 
Straightforward computation gives 
$$
m\left(\beta \right) = \sqrt 5 \le m (\alpha) \quad \mbox{ for any } \alpha \in
\{1,2\}^{\mathbb Z}.
$$

\subsubsection{Approach to lower bound.}
\label{ss:alb}
We fix some threshold $T$ and attempt to construct a finite set of finite ``forbidden'' 
strings $\beta_{-k} \ldots \beta_{-1} \beta_0 \beta_1 \ldots \beta_n$ so
that \emph{all} infinite extensions 
$
\left\{ \alpha \in \{1,2\}^{\mathbb{Z}} \mid \beta_j = \alpha_j, \, -k \le j \le
n \right\}
$ 
of these strings have Markov values $m(\alpha) > T$.   

By definition, after excluding from $E_2$ all irrational numbers 
whose continued fraction expansion contains a ``forbidden'' string, we obtain
a Cantor set~$K\subset E_2$ such that 
\begin{equation}
    \label{eq:Klow}
M \cap \bigl(\sqrt 5,T\bigr) 
\subset 2+ K + K.
\end{equation}
Recall that $\dim_H (K+K) \leq \dim_H K+ \overline{\dim_B} K$,
where $\overline{\dim_B} K$ denotes the upper box dimension. 
It is known that  $\overline{\dim_B} K = \dim_H K$ for these
types of sets (cf. Chapter 4 of Palis--Takens book~\cite{PT}) and hence $\dim_H K < 0.5$ implies that $t_1 \geq T$. 

\subsubsection{Approach to upper bound.}
\label{ss:aub}
Now let~$S$ be the maximal Markov value of strings which do not contain a forbidden
string as a substring and let $K\subseteq E_2$ be as above. It was shown in~\cite[proof of Lemma 3]{Mor18}
that 
\begin{equation}
    \label{eq:Ktop}
\min\{2\cdot\dim_H K,1\}\leq \dim_H( (\sqrt5,S) \cap M),
\end{equation}
Therefore we deduce that $\dim_H K \ge 0.5$ implies $t_1 \leq S$.  

In order to illustrate this methodology we shall show the double
inequality~\eqref{eq:10-12}. 
\begin{example}
To establish the lower bound in \eqref{eq:10-12}, we can use a
result by Hall~\cite{Hall} stating that 
if $\alpha\in\{1,2\}^{\mathbb{Z}}$ doesn't contain the string $121$, 
then $m(\alpha)<\sqrt{10}$. So we choose 
$$
K : = \left\{[0;\alpha_1,\alpha_2,\dots] \mid \alpha_j\in\{1,2\}, \mbox{ and }
(\alpha_j \alpha_{j+1} \alpha_{j+2})\neq (121) \, \mbox{ for all } 
\, j \ge 1 \right\},
$$ 
and apply the algorithm from~\S\ref{s.PV-new} to show that $\dim_H K < 0.45$. 
Then~\eqref{eq:Klow} gives
$$
\dim_H\left(\left\{\alpha \in\{1,2\}^{\mathbb{Z}}: \sqrt{5} < m(\alpha) \leq
\sqrt{10} \right\}\right) \le 2\dim_H K \le 0.9
$$
and the lower bound $t_1 \ge \sqrt{10}$ follows. 

To establish the upper bound in~\eqref{eq:10-12}, we recall a result by Perron~\cite{Per21} which
    states that $m(\alpha)\leq\sqrt{12}$ if and only if
$\alpha\in\{1,2\}^{\mathbb{Z}}$. Therefore we may choose the empty set of forbidden
strings and $K = E_2$. Combining~\eqref{eq:dimE2} with~\eqref{eq:Ktop} we
get 
$$
\dim_H( (\sqrt5,\sqrt{12}) \cap M) \ge \min\{2\cdot \dim_H E_2, 1\} =
\min(2\cdot 0.54318,1) = 1,
$$ 
and conclude that $t_1 \le \sqrt{12}$. 
\end{example}
This approach has been used by Bumby to obtain heuristic estimates and we will
review it in detail below in~\S\ref{ss:abumby}. Since we already know that $m(\alpha) \le \sqrt{12}$
if and only if $\alpha \in \{1,2\}^{\mathbb Z}$, until the end of \S\ref{s.Bumby-improved} 
we study only sequences of~$1$s and~$2$s. 

\subsection{Bumby's method for building the set of
forbidden strings}
\label{ss:abumby}
In this section we explain how to find a suitable set of forbidden strings which
can be employed to define a set~$K$ to use in~\eqref{eq:Klow} or~\eqref{eq:Ktop}. 

Recall the map $\lambda$ introduced in~\S\ref{s:intro}
$$
\lambda_0 \colon \mathbb N^{\mathbb Z} \to \mathbb R, \qquad \lambda(\alpha) =
[\alpha_0; \alpha_{1}, \alpha_{2},\dots] + [0;  \alpha_{-1}, \alpha_{-2},\dots].
$$
On the one hand, it is clear from definition that $m(\alpha) \ge
\lambda_0(\alpha)$. On the other hand, it is a well known fact that for any
Markov value $m \in M$, there exists a sequence $\alpha$ such that
$\lambda_0(\alpha) = m$; see for instance~\cite[Lemma 6, Chapter 1]{CF}. 
Therefore, one can attempt to construct a suitable set of forbidden strings by
studying the function~$\lambda_0$. This brings us to introducing a function~$J$,
which associates to a finite string a closed interval.

\begin{definition}
We denote by $J(\alpha_{-k,j})$ the interval given by the convex hull of the set
of values $\lambda_0(\beta)$ for strings $\beta \in\{1,2\}^{\mathbb{Z}}$ such that
$\beta_n=\alpha_n$ for all $-k\leq n\leq j$.
\end{definition}
In the sequel we will use the following shorthand notation for certain finite substrings of a
string $\alpha$: $\alpha_{-k,j} = \alpha_{-k}\dots
\alpha_{-1} \alpha_0 \alpha_1\dots \alpha_j$, where $j,k \ge 0$. 

Let us denote by $\overline{\alpha}$ the periodic
sequence obtained by infinite repetition of a given finite string $\alpha$.   
The following technical Lemma allows one to compute the
interval $J(\alpha_{-k,j})$ explicitly. 
\begin{lemma}
    \label{lem:intlim}
    For any sequence $\alpha \in \{1,2\}^{\mathbb{Z}}$ we have an upper bound
    $$
    \lambda_0(\alpha) \le 
    \begin{cases}
    [\alpha_0; \alpha_1 \dots \alpha_j \overline{12}] + 
    [0;\alpha_{-1} \dots \alpha_{-k} \overline{12}], &\mbox{ if } k \mbox{ and }
    j \mbox{ are even,} \\
    [\alpha_0; \alpha_1 \dots \alpha_j \overline{21}] + 
    [0;\alpha_{-1} \dots \alpha_{-k} \overline{12}], &\mbox{ if } k \mbox{ is
    even and } j \mbox{ is odd,} \\ 
    [\alpha_0; \alpha_1 \dots \alpha_j \overline{12}] + 
    [0;\alpha_{-1} \dots \alpha_{-k} \overline{21}], &\mbox{ if } k \mbox{ is
    odd and } j \mbox{ is even,} \\ 
    [\alpha_0; \alpha_1 \dots \alpha_j \overline{21}] + 
    [0;\alpha_{-1} \dots \alpha_{-k} \overline{21}], &\mbox{ if } k \mbox{ and }
    j \mbox{ are odd;} 
    \end{cases}
    $$
    and a lower bound 
    $$
    \lambda_0(\alpha) \ge 
    \begin{cases}
    [\alpha_0; \alpha_1 \dots \alpha_j \overline{21}] + 
    [0;\alpha_{-1} \dots \alpha_{-k} \overline{21}], &\mbox{ if } k \mbox{ and }
    j \mbox{ are even,} \\
    [\alpha_0; \alpha_1 \dots \alpha_j \overline{12}] + 
    [0;\alpha_{-1} \dots \alpha_{-k} \overline{21}], &\mbox{ if } k \mbox{ is
    even and } j \mbox{ is odd,} \\ 
    [\alpha_0; \alpha_1 \dots \alpha_j \overline{21}] + 
    [0;\alpha_{-1} \dots \alpha_{-k} \overline{12}], &\mbox{ if } k \mbox{ is
    odd and } j \mbox{ is even,} \\ 
    [\alpha_0; \alpha_1 \dots \alpha_j \overline{12}] + 
    [0;\alpha_{-1} \dots \alpha_{-k} \overline{12}], &\mbox{ if } k \mbox{ and }
    j \mbox{ are odd.} 
    \end{cases}
    $$
\end{lemma}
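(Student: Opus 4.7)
The plan is to extremize the two summands $[\alpha_0;\alpha_1,\alpha_2,\ldots]$ and $[0;\alpha_{-1},\alpha_{-2},\ldots]$ in the definition of $\lambda_0(\alpha)$ separately, since they depend on disjoint blocks of the bi-infinite sequence. I will describe the argument for the forward sum in detail; the backward one is identical after relabeling, with the role of $j$ played by $k$.

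The first step is a monotonicity observation. Write
\[
    [\alpha_0;\alpha_1,\ldots,\alpha_j,x_{j+1},x_{j+2},\ldots] = [\alpha_0;\alpha_1,\ldots,\alpha_j,z],
\]
where $z := [x_{j+1};x_{j+2},\ldots]$. A short induction on $j$, using the identity $[a_0;a_1,\ldots,a_j,z]=a_0+1/[a_1;a_2,\ldots,a_j,z]$, shows that with $\alpha_0,\ldots,\alpha_j$ fixed this function is strictly decreasing in $z>0$ when $j$ is even and strictly increasing when $j$ is odd. The second step is a greedy extremization of $z$. Since $z$ itself is a continued fraction with partial quotients in $\{1,2\}$, the same alternating monotonicity applies iteratively: to minimise $z$ one picks $x_{j+1}=1$, which forces the remaining tail to be maximal, which in turn requires $x_{j+2}=2$, and so on, producing the periodic continuation $\overline{12}$; maximising $z$ instead produces $\overline{21}$. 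Combining the two steps, the upper bound on the forward tail is achieved by appending $\overline{12}$ when $j$ is even and $\overline{21}$ when $j$ is odd, and the opposite choices yield the lower bound.

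Applying exactly the same analysis to $[0;\alpha_{-1},\ldots,\alpha_{-k},w]$ with $w:=[y_{k+1};y_{k+2},\ldots]$ gives the parallel statement for the backward tail, with $k$ in place of $j$. Summing the bounds for the two independent tails over the four parity combinations $(k,j)\in\{\text{even},\text{odd}\}^2$ produces all eight formulas in the statement.

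No substantive obstacle is anticipated. The argument is essentially bookkeeping of four parity cases; the only point requiring verification is the monotonicity alternation in $j$, which follows routinely from the convergent recursion, together with the (automatic, since $\{1,2\}$ is finite) fact that the entrywise greedy optimum over $\{1,2\}^{\mathbb N}$ coincides with the infimum or supremum of the tail value because each entry's effect on $z$ is monotone independently of later entries.
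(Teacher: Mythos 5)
Your argument is correct and is essentially the paper's: the paper's one-line proof simply cites $\inf E_2=[0;\overline{21}]$ and $\sup E_2=[0;\overline{12}]$ and leaves the alternating monotonicity in the tail implicit, whereas you spell out both that monotonicity and the greedy derivation of the extremal tails. No gap.
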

\begin{proof}
    This follows immediately from the fact that $\inf E_2 =
    [0;\overline{21}]=\frac12(\sqrt3-1)$ and $\sup E_2 =
    [0;\overline{12}]=\sqrt3-1$, where $\overline{21}$ and $\overline{12}$
    represent infinite sequences of alternating $1$s and $2$s.
\end{proof}
This Lemma also allows us to establish two more properties of the function~$J$ which will be
useful for our analysis. 
\begin{enumerate}
\item The function $J$ is invariant under reversal of the string (note that
    reversal keeps the $0$'th place unchanged).  
\item Extensions of a string $\alpha_{-k,j}$ correspond to
    subintervals of $J(\alpha_{-k,j})$. 
\begin{align*}
J(\alpha_{-k,j}) & = J(1 \alpha_{-k}\dots \alpha_j) \cup J(
2\alpha_{-k} \dots \alpha_j) \\ 
 \quad & = J (\alpha_{-k}\dots \alpha_j 1) \cup J(
\alpha_{-k}\dots \alpha_j 2); 
\end{align*}
Observe that unions need not to be disjoint. 
\end{enumerate}
The second property allows us to organise the intervals obtained from continuations
of a given string in a binary tree, so that the union of children is equal
to the parent. 

Now we can describe a recursive process for the construction of sets of forbidden strings. 
A basic idea is that we fix a threshold~$T$, close to
a conjectured lower bound on~$t_1$ and look for finite strings~$\alpha_{-k,j}$ such that the
corresponding intervals~$J(\alpha_{-k,j})$ lie to the \emph{right} of~$T$.
We call these finite strings ``forbidden'' and obtain the Cantor set~$K$ by
removing from~$E_2$ all numbers whose continued fraction expansion contains a forbidden
substring. If an interval $J(\alpha_{-k,j})$ lies to the \emph{left} of~$T$, 
we make a record of its right end point as a possible upper bound on $t_1$.
If $T \in J(\alpha_{-k,j})$ then we subdivide the interval into two by adding an extra
symbol to $\alpha_{-k,j}$ either in the beginning or at the end and study these
two new intervals at the next step of the recursive process. When we find a new
forbidden substring, we recompute the Hausdorff dimension of the updated set~$K$. We may
need to lower the original threshold, if $\dim_H K > 0.5$ \emph{and} the right end points of
the intervals which lie to the left of $T_1$ is too large; on the other hand, we
may need to increase the original threshold if $\dim_H K < 0.5$ and we
\emph{look to improve} an existing lower bound. We terminate the recursion when 
we find two sets which are suitable to confirm lower and upper bounds on~$t_1$
using~\eqref{eq:Klow} and~\eqref{eq:Ktop} respectively.


\begin{remark}
    A similar approach can be used to solve the equation $f(x) = t$ for other
    values $0 < t < 1$. Namely, Figure~\ref{fig:functionf} was obtained by
    computing the lower and upper bounds on~$x$ with the property that $\dim_H
    \left( (-\infty,x) \cap M \right) = t$ using~\eqref{eq:Klow}
    and~\eqref{eq:Ktop} for $t = 3 + t_1 \cdot \frac{k}{700}$, $k = 1, 2,
    \ldots, 700$. 
\end{remark}

\subsection{Rigorous verification of the Bumby's
estimate~\eqref{e.Bumby-heuristic}. }
In preparation for our estimate for $t_1$ we will  first rigorously confirm bounds 
close to the heuristic values of Bumby~\cite{Bumby}. This analysis will be an
integral part of our subsequent improved estimates.

Following Bumby, keeping in mind the heuristic estimate $3.33437 < t_1 <
3.33440$ which we would like to confirm rigorously, let us fix the threshold 
$$
T_1=3.334369.
$$ 
We are ready to start the recursive process of computing the set of
forbidden strings. 
We use the asterisk to mark the zeroth place in the string, i.e. $22^*1$
corresponds to $\alpha_{-1}=2$, $\alpha_0=2$, $\alpha_1=1$. 

\begin{figure}
\includegraphics[scale=0.85]{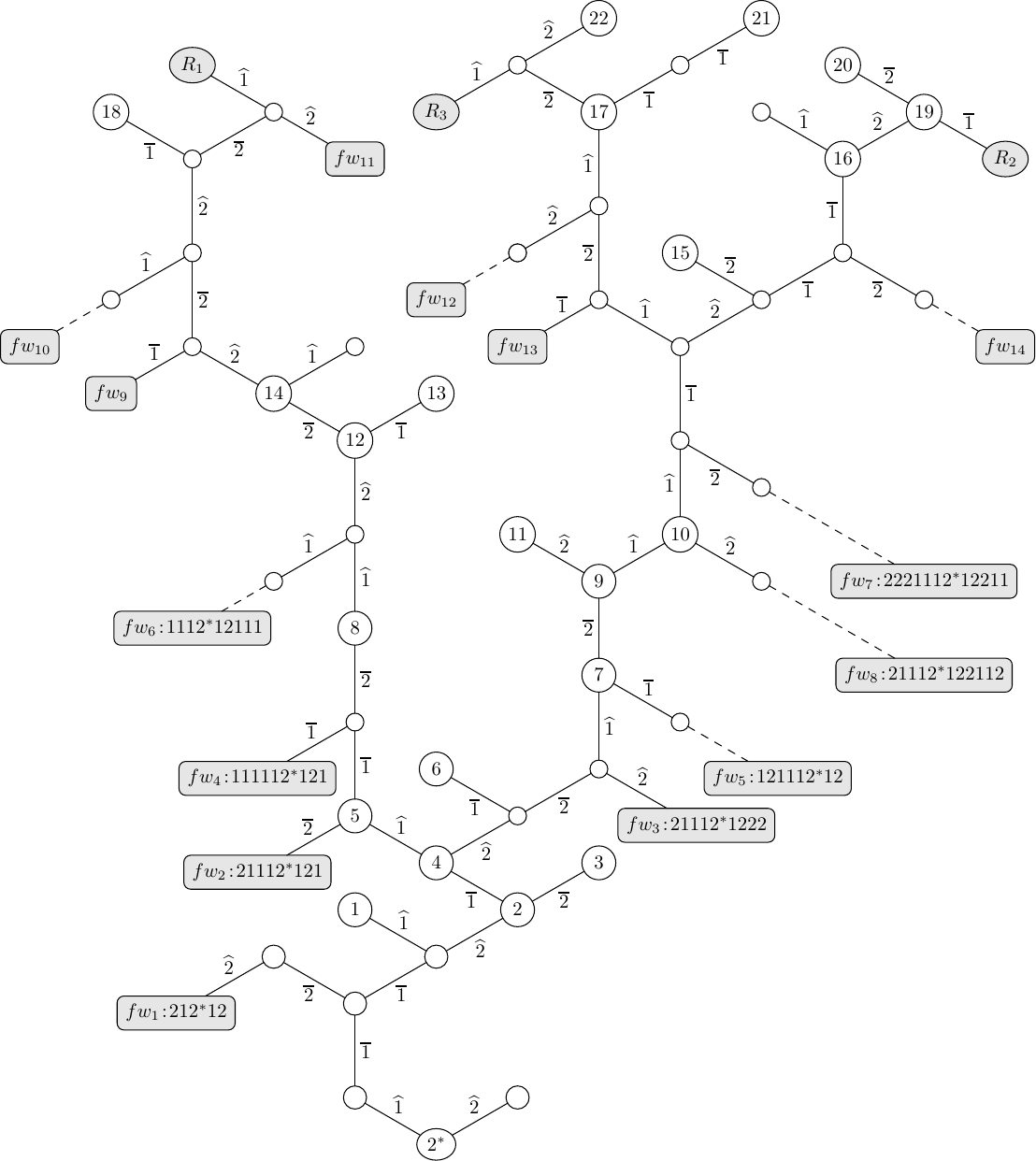}
\caption{The tree depicting the process of constructing forbidden strings. Each
vertex corresponds to a finite string of $1$s and $2$s, which can be recovered
by going down to the root writing the labels along the path marked by bars and going up to the
vertex writing the labels marked by hats. The
short forbidden words are written explicitly, the longer ones abbreviated as
$fw_{j}$, $j = 9,\dots,14$. A dashed edge without a label leading to a forbidden
word means that the forbidden word corresponds to a longer interval than the 
one which corresponds to the vertex it is connected to. The vertices $R_1$, $R_2$,
$R_3$ will be used as the roots for new trees which we build in Section~\ref{ss:t.A} in
order to prove our first main result, Theorem~\ref{t.A}. }
\label{fig:bumbytree}
\end{figure}
We begin with a simple observation that
a sequence $\alpha\in\{1,2\}^{\mathbb{Z}}$ with $3<\lambda_0(\alpha)<\sqrt{12}$
satisfies $\alpha_0=2$. (Since by Lemma~\ref{lem:intlim} we get
$J(1^*)=[\sqrt3,\sqrt{12}-1]$ and $J(2^*) = [ \sqrt3+1,\sqrt{12} ]$.) We may now
consider two continuations $2^*2$ and $2^*1$.  
Applying Lemma~\ref{lem:intlim} again we compute
$$ 
1+\sqrt3 = [2;2\overline{12}] + [0;\overline{21}] \leq
\lambda_0(2^*2)\leq[2;2\overline{21}]+[0;\overline{12}] = 2+\frac{2}{\sqrt3}.
$$
We conclude that $ J(22^*) = J(2^*2) \subset [1+\sqrt3, 3.155] < T_1$ and
proceed to  analyse continuations of $x_{-1}x_0^*x_1=12^*1$. 

The process of constructing the set of forbidden strings is depicted 
in Figure~\ref{fig:bumbytree}. We begin at the root marked $2^*$ and follow two
edges up adding letters marked by the bar symbol in the beginning, as a prefix, and
letters marked by the hat symbol at the end, as a suffix. Thus every vertex
corresponds to a finite string and we compute the interval corresponding to this
string in order to decide how to proceed further. Starting from the root, after two steps 
we arrive at a vertex which corresponds to $12^*1$. Taking two steps
further we obtain $212^*12$ which is our first excluded string because
by Lemma~\ref{lem:intlim},  the 
corresponding interval is $J(212^*12)\subset [3.4,\sqrt{12}] > T_1$.

The intervals corresponding to the vertices of the tree which are crucial to our
analysis are recorded in Table~\ref{tab:bumby}. 
For completeness in \S\S\ref{s:21212}--\ref{s:extra3} we give details of our
analysis. All intervals are computed using Lemma~\ref{lem:intlim}.
\begin{table}
    \begin{tabular}{|c|c|rl|rl|c|}
        \hline
       Set & Vertex & String & $ \kern-9pt \alpha_{-k,j}$ & Interval
        $J_0 \supset $&$  \kern-9pt J(\alpha_{-k,j})$ & Action \\
        \hline
     \multirow{33}{*}{$B_1$}
  &  \cellcolor{black!10}      $fw_1$   &  \cellcolor{black!10} $212^* $  &   \cellcolor{black!10}  $ \kern-13pt 12$                     & $[3.4, $  &   $ \kern-9pt \sqrt{12}]      $ & E \\ 
  &     $1$ & $112^* $  &   $ \kern-11pt 11$                     & $[3.1547, $  &   $ \kern-9pt  3.268]      $ & A \\
  &     $2$ & $112^* $  &   $ \kern-11pt 12$                     & $[3.28, $  &   $ \kern-9pt  3.3661]       $ & S \\
  &     $3$ & $2112^* $  &   $ \kern-11pt 12$                    & $[3.2802, $  &   $ \kern-9pt  3.3193]     $ & A \\
  &     $4$ & $1112^* $  &   $ \kern-11pt 12$                    & $[3.3149, $  &   $ \kern-9pt  3.3661]     $ & S \\
  &     $5$ & $11112^* $  &   $ \kern-11pt 121$                  & $[3.3324, $  &   $ \kern-9pt  3.3524]     $ & S \\
&  \cellcolor{black!10}     $fw_2$    &  \cellcolor{black!10}   $21112^* $  &    \cellcolor{black!10}   $ \kern-13pt 121$                  & $[3.35, $  &   $ \kern-9pt  3.3661]       $ & E \\
  &     $6$ & $11112^* $  &   $ \kern-11pt 122$                  & $[3.3149, $  &   $ \kern-9pt  3.3282]     $ & A \\
&   \cellcolor{black!10}       $fw_3$    &  \cellcolor{black!10}   $21112^* $  &    \cellcolor{black!10}   $ \kern-13pt 1222$                 & $[3.337, $  &   $ \kern-9pt  3.3419]      $ & E \\
  &     $7$ & $21112^* $  &   $ \kern-11pt 1221$                 & $[3.3329, $  &   $ \kern-9pt  3.3389]     $ & S \\
&    \cellcolor{black!10}    $fw_4$    & \cellcolor{black!10}  $111112^* $  & \cellcolor{black!10}    $ \kern-13pt 121$                 & $[3.3376, $  &   $ \kern-9pt  3.3524]     $ & E \\
  &     $8$ & $211112^* $  &   $ \kern-11pt 121$                 & $[3.3324, $  &   $ \kern-9pt  3.3456]     $ & S \\   
   &    \cellcolor{black!10}    $fw_5$  &   \cellcolor{black!10} $121112^* $  &    \cellcolor{black!10}  $ \kern-13pt 12$                  & $[3.3353, $  &   $ \kern-9pt  3.3661]     $ & E \\
  &     $9$ & $221112^* $  &   $ \kern-11pt 1221$                & $[3.3329, $  &   $ \kern-9pt  3.3356]     $ & S \\
  &    \cellcolor{black!10}    $fw_6$    &   \cellcolor{black!10}  $1112^* $  &    \cellcolor{black!10}   $ \kern-13pt 12111$                 & $[3.3351, $  &   $ \kern-9pt  3.3588]     $ & E \\
  &    $10$ & $221112^* $  &   $ \kern-11pt 12211$               & $[3.3341, $  &   $ \kern-9pt  3.3356]     $ & S \\
  &    $11$ & $221112^* $  &   $ \kern-11pt 12212$               & $[3.33294, $  &   $ \kern-9pt  3.33397]   $ & A \\
  &    $12$ & $211112^* $  &   $ \kern-11pt 12112$               & $[3.3324, $  &   $ \kern-9pt  3.3348]     $ & S \\
  &    $13$ & $1211112^* $  &   $ \kern-11pt 12112$              & $[3.3324, $  &   $ \kern-9pt  3.3339]     $ & A \\
 &     \cellcolor{black!10}   $fw_7$     &   \cellcolor{black!10}  $21112^* $  &    \cellcolor{black!10}   $ \kern-13pt 122112$               & $[3.3347, $  &   $ \kern-9pt  3.3389]     $ & E \\
  &    $14$ & $2211112^* $  &   $ \kern-11pt 12112$              & $[3.33369, $  &   $ \kern-9pt  3.33426]   $ & S \\
  &    \cellcolor{black!10}     $fw_8$     &    \cellcolor{black!10}  $2221112^* $  &     \cellcolor{black!10}   $ \kern-13pt 12211$              & $[3.33469, $  &   $ \kern-9pt  3.335541]  $ & E \\
  &     \cellcolor{black!10}    $fw_9$ &    \cellcolor{black!10}  $12211112^* $  &    \cellcolor{black!10}    $ \kern-13pt 121122$          & $[3.33448, $  &   $ \kern-9pt  3.33472]   $ & E \\      
  &   \cellcolor{black!10}      $fw_{10}$  &  \cellcolor{black!10}   $2211112^* $  &    \cellcolor{black!10}    $ \kern-13pt 1211221$       & $[3.33441, $  &   $ \kern-9pt  3.33472]   $ & E \\ 
  &   \cellcolor{black!10}      $fw_{13}$ &    \cellcolor{black!10}  $11221112^* $  &     \cellcolor{black!10}   $ \kern-13pt 1221111$      & $[3.33447, $  &   $ \kern-9pt  3.334684]  $ & E \\ 
  &    $15$  & $21221112^* $  &   $ \kern-11pt 1221112$          & $[3.33414, $  &   $ \kern-9pt  3.33424]   $ & A \\
  &    $16$  & $111221112^* $  &   $ \kern-11pt 1221112$         & $[3.3343, $  &   $ \kern-9pt  3.334393]   $ & S \\
 &      \cellcolor{black!10}   $fw_{14}$ &    \cellcolor{black!10}  $211221112^* $  &     \cellcolor{black!10}   $ \kern-13pt 12211$       & $[3.3343894, $  &   $ \kern-9pt  3.3352]  $ & E \\
  &    $17$  & $21221112^* $  &   $ \kern-11pt 12211111$         & $[3.3343, $  &   $ \kern-9pt  3.334402]   $ & S \\ 
&    \cellcolor{black!10}  $fw_{12}$ & \cellcolor{black!10}  $21112^* $  &   \cellcolor{black!10}  $ \kern-13pt 12211112$        & $[3.3344009, $  &   $ \kern-9pt  3.3384]  $ & E \\
  &    $18$  & $122211112^* $  &   $ \kern-11pt 1211222$         & $[3.334335, $  &   $ \kern-9pt  3.334375] $ & S \\
   &     \cellcolor{black!10}   $R_1$ &   \cellcolor{black!10}  $222211112^* $  &     \cellcolor{black!10}  $ \kern-13pt 12112221$        & $[3.334371, $  &   $ \kern-9pt  3.3343876]$ & E \\
 &     \cellcolor{black!10}   $fw_{11}$ &    \cellcolor{black!10} $222211112^* $  &     \cellcolor{black!10}  $ \kern-13pt 12112222$    & $[3.3343899, $  &   $ \kern-9pt  3.33441] $ & E \\
    \hline
 \multirow{6}{*}{$B_2$} &
     $19$    &       $111221112^* $  &   $ \kern-11pt 12211121$  & $[3.334304, $  &   $ \kern-9pt  3.334363]$ &   A   \\
  &  $20$    &      $2111221112^* $  &   $ \kern-11pt 12211122$ & $[3.33434, $  &   $ \kern-9pt  3.334362]$  &    A  \\
   &  \cellcolor{black!10}  $R_2$   &   \cellcolor{black!10}     $1111221112^* $  &   \cellcolor{black!10}  $ \kern-13pt 12211122$ & $[3.3343695, $  &   $ \kern-9pt  3.334393]$ &   E  \\
  &  $21$    &      $1121221112^* $  &   $ \kern-11pt 12211111$ & $[3.334325, $  &   $ \kern-9pt  3.334373]$ &    S  \\
  &  $22$    &      $221221112^* $  &   $ \kern-11pt 122111112$ & $[3.33435, $  &   $ \kern-9pt  3.3343683]$ &    A  \\
   &   \cellcolor{black!10} $R_3$   &    \cellcolor{black!10}    $221221112^* $  &   \cellcolor{black!10}  $ \kern-13pt 122111111$ & $[3.334378, $  &   $ \kern-9pt  3.33441]$  &    E \\
  \hline
    \end{tabular}
    \medskip
    \caption{Strings and intervals crucial to our analysis. The action column indicates how to proceed
    with the tree construction further: 
    E --- Exclude the string (the case $J_0 > T_1=3.334369$), the corresponding
    vertex is a leaf; A --- Abandon the branch (the case $J_0 < T_1$) the corresponding
    vertex is a leaf; S --- Subdivide the interval into two parts (the
    case  $T_1 \in J_0$), the vertex is a branching point.
    An estimate for $J(211221112^*12211)$ used the fact that $21112121$ is
    excluded, in addition to Lemma~\ref{lem:intlim}. There is only one branch
    out of the $8$th vertex because $2^*1212$ is already excluded, so the only
    possible suffix is $\widehat 1$. 
    }
    \label{tab:bumby}
\end{table}

\subsubsection{Exclusion of $21212$} Note that 
\label{s:21212}
$$J(112^*11)\subset [3.1547, 3.268], \quad J(112^*12)\subset [3.28, 3.3661], \quad J(212^*12)\subset [3.4,\sqrt{12}].$$
Since $3.268 < T_1 < 3.4$, we exclude $212^*12$ and we analyse the continuation
of $212^*11$.  For this purpose, we decompose $J(112^*12)$ into $J(1112^*12)$
and $J(2112^*12)$. 

Note that 
$$J(2112^*12)\subset [3.2802, 3.3193] \quad \textrm{and} \quad J(1112^*12)\subset [3.3149, 3.3661].$$
Thus, it suffices to study the continuations $x=1112^*12$ (as $T_1 > 3.3193$). 

\subsubsection{Exclusion of $21112121$ and $211121222$} Let's consider the
decompositions of the intervals $J(1112^*121)$ and $J(1112^*122)$ where the
string 21212 doesn't appear. For the first interval, it amounts to studying
$J(11112^*121)$, $J(21112^*121)$. Note that 
$$J(11112^*121)\subset [3.3324, 3.3524] \quad \textrm{and} \quad J(21112^*121)\subset [3.35, 3.3661].$$
For the second interval, we have 
$$J(21112^*1222)\subset [3.337, 3.3419], \quad \quad J(11112^*1222)\subset [3.3189, 3.3282]$$
$$J(21112^*1221)\subset [3.3329, 3.3389], \quad \quad J(11112^*1221)\subset [3.3149, 3.3252].$$
Since $3.3282<T_1<3.337$, we exclude $21112121$ and $211121222$, and we shall
consider the decompositions of the intervals $J(11112^*121)$ and
$J(21112^*1221)$. 

\subsubsection{Exclusion of
 $111112121$ and $12111212$}
 Note that 
$$J(111112^*121)\subset [3.3376, 3.3524], \quad J(211112^*121)\subset [3.3324, 3.3456],$$
and 
$$J(121112^*1221)\subset J(121112^*12)\subset [3.3353, 3.3661], \quad J(221112^*1221)\subset [3.3329, 3.3356].$$ 
Because $T_1<3.3353$, we exclude $111112121$ and $12111212$, and we consider the
decompositions of $J(211112^*121)$ and $J(221112^*1221)$. Actually, given that
the string $21212$ is already excluded, our task is to study the decompositions
of $J(211112^*1211)$ and $J(221112^*1221)$. 

\subsubsection{Exclusion of $1111212111$} Observe that 
$$J(211112^*12111)\subset J(1112^*12111)\subset [3.3351, 3.3588], \quad
J(211112^*12112)\subset [3.3324, 3.3348],$$ 
and 
$$J(221112^*12212)\subset [3.33294, 3.33397], \quad J(221112^*12211)\subset [3.3341, 3.3356].$$
Because $3.33397<T_1<3.3351$, we exclude $111212111$ and we decompose
$J(211112^*12112)$ and $J(221112^*12211)$. 

\subsubsection{Exclusion of $21112122112$} Note that $J(211112^*12112)$ decomposes into $J(2211112^*12112)$ and 
$$J(1211112^*12112)\subset [3.3324, 3.3339].$$
Similarly, $J(221112^*12211)$ decomposes into $J(221112^*122111)$ and 
$$J(221112^*122112)\subset J(21112^*122112)\subset [3.3347, 3.3389].$$
Since $3.3339 < T_1 < 3.3347$, we exclude $21112122112$, and we decompose
$J(2211112^*12112)$ and $J(221112^*122111)$.  

\subsubsection{Exclusion of $222111212211$} Note that $J(2211112^*12112)$ breaks
into $J(2211112^*121122)$ and 
$$J(2211112^*12112)\subset [3.33369, 3.33426]$$
Analogously, $J(221112^*122111)$ decomposes into $J(1221112^*122111)$ and 
$$J(2221112^*122111)\subset J(2221112^*12211)\subset [3.33469, 3.335541].$$
Given that $3.33426<T_1<3.33469$, we exclude $222111212211$, and we proceed to
analyse the decompositions of $J(2211112^*121122)$ and $J(1221112^*122111)$. 

\subsubsection{Exclusion of $12211112121122$} We break the previous intervals
into $J(12211112^*121122)$, $J(22211112^*121122)$ and $J(1221112^*1221111)$,
$J(1221112^*1221112)$, and we observe that 
$$J(12211112^*121122)\subset [3.33448,3.33472].$$
Because $T_1<3.33448$, we exclude $12211112121122$, and we decompose
$J(22211112^*121122)$,  $J(1221112^*1221111)$ and $J(1221112^*1221112)$. 

\subsubsection{Exclusion of two extra strings} We decompose the interval
$J(22211112^*121122)$ into $J(22211112^*1211222)$ and 
$$J(22211112^*1211221)\subset J(2211112^*1211221)\subset [3.33441, 3.33472].$$
Similarly, $J(1221112^*1221111)$ subdivides into $J(21221112^*1221111)$ and 
$$J(11221112^*1221111)\subset [3.33447, 3.334684].$$
Analogously, $J(1221112^*1221112)$ breaks into $J(11221112^*1221112)$ and 
$$J(21221112^*1221112)\subset [3.33414, 3.33424].$$
Because $3.33424 < T_1 < 3.33441$, we exclude $112211121221111$ and
$22111121211221$, and we analyse $J(22211112^*1211222)$,
$J(21221112^*1221111)$ and $J(11221112^*1221112)$. 

\subsubsection{Exclusion of three extra strings} 
\label{s:extra3}
We decompose the interval $J(11221112^*1221112)$ into 
$$J(111221112^*1221112)\subset [3.3343, 3.334393],$$ 
$$J(211221112^*1221112)\subset J(211221112^*12211)\subset [3.3343894, 3.3352].$$
(Here, we estimated the second interval using the fact that $21112121$ is excluded.)

Similarly, we break $J(21221112^*1221111)$ into 
$$J(21221112^*12211111)\subset [3.3343, 3.334402],$$ 
$$J(21221112^*12211112)\subset J(21112^*12211112)\subset [3.3344009, 3.3384].$$

Finally, we observe that $J(22211112^*1211222)$ subdivides into $J(122211112^*1211222)$, $J(222211112^*12112221)$, $J(222211112^*12112222)$ with 
$$J(122211112^*1211222)\subset [3.334335, 3.334375],$$
$$J(222211112^*12112221)\subset [3.334371, 3.3343876],$$
and 
$$J(222211112^*12112222)\subset [3.3343899, 3.33441].$$
Since $T_1 < 3.3343894$, we exclude $21122111212211$, $2111212211112$ and $22221111212112222$. 

\subsubsection{Upper bound on $t_1$ revisited} 
\label{sss:t1up}
Using numerical data from the top part of Table~\ref{tab:bumby} we are now in a position to
get an upper bound on $t_1$ in line with the heuristic estimate of $3.33440$
suggested by Bumby. 
Denote by $B_1$ the Cantor set of numbers whose continued fraction expansions in
$\{1,2\}^{\mathbb{N}}$ which do not contain the following fourteen strings (nor
their transposes) taken from the lines of Table~\ref{tab:bumby} marked for
exclusion:
\begin{itemize}
\item $21212$, $21112121$, $211121222$, $111112121$, $12111212$, $111212111$, $21112122112$, 
\item $222111212211$, $12211112121122$, $112211121221111$, $22111121211221$, 
\item $21122111212211$, $2111212211112$ and $22221111212112222$.
\end{itemize} 
The  algorithm described in Section~\ref{s.PV-new} provides us lower and upper
bounds (see Subsection~\ref{B1set} for numerical data and implementation notes)
\begin{equation}
    \label{eq:B1}
0.50001 < \dim B_1 < 0.50005
\end{equation}
which confirms Bumby's heuristics in~\cite{Bumby}. Consequently,
applying~\eqref{eq:Ktop} we get that
$t_1$ is bounded from above by the maximum of the right endpoints of the
non-excluded intervals that appeared in the process of construction of the set
$B_1$ (both abandoned and marked for subdivision). This turns out to be the right end point of the
interval corresponding to the vertex~$17$. In particular, we have that
$$
t_1\leq S_1=3.334402.
$$ 


\subsubsection{Lower bound on~$t_1$ revisited} 
\label{sss:t1low}
With a little more work we can get a lower bound on~$t_1$ which supports Bumby's
lower bound on~$t_1$ of~$3.33437$. 
Continuing to follow Bumby~\cite{Bumby}, let us further analyse the intervals 
$$
J(21221112^*12211111)\ \mbox{ and }\ J(111221112^*1221112),
$$
which correspond to the $16$th and $17$th vertices of the tree and marked for
subdivision in Table~\ref{tab:bumby}. Our computations are presented in
the bottom part of Table~\ref{tab:bumby}.  
In particular, we see that one can also exclude
$111122111212211122$ and $221221112122111111$ 
in order to obtain a smaller Cantor set
$B_2\subsetneq B_1$. Applying the algorithm for computing Hausdorff dimension
described in \S\ref{s.PV-new} we obtain estimates on dimension (see
\S\ref{B2set} for implementation notes): 
\begin{equation}
    \label{eq:B2}
0.499975 < \dim_H B_2 < 0.49999
\end{equation}
This is quite close to Bumby's heuristic claim that $\dim_H(B_2) < 0.499974$ and
we conclude that 
$$t_1\geq T_1=3.334369.$$ 

Summing up, we have rigorously confirmed that the heuristic argument by Bumby in
favour of looking for~$t_1$ inside the interval~$(3.33437,3.33440)$
was correct.  

After the above review (and slight improvement) of Bumby's work~\cite{Bumby}, 
we now turn to the proof of our main result Theorem~\ref{t.A}.

\subsection{Proof of Theorem \ref{t.A}} 
\label{ss:t.A}
Recall that our goal is to show that the first transition point $t_1 =
3.334384\dots$. It is sufficient to prove that 
$$
3.3343840 < t_1 < 3.33438495.
$$
For this purpose, let us fix the thresholds 
\begin{equation}
    \label{eq:T2S2}
T_2 := 3.334384009 \quad \mbox{ and } \quad S_2:= 3.3343849341.
\end{equation}
Our goal now is to modify the Cantor sets~$B_1$ and~$B_2$ defined above to
obtain two Cantor sets~$X$ and~$Y$ such that the intervals corresponding to
forbidden strings used to define~$X$ lie to the right of~$T_2$ and~$S_2$ is the
right end point of the intervals corresponding to the non-excluded strings which
appear in the construction of $Y$. Furthermore, we also require that the double
inequality $\dim_H X < 0.5 < \dim_H Y$ holds. 

In this direction we consider the intervals listed in Table~\ref{tab:bumby}
and choose the smallest (by inclusion) intervals which contain both~$T_2$
and~$S_2$ in order to subdivide them further and to identify forbidden strings
exclusion of which will result in Cantor sets with dimension closer to~$0.5$ than $\dim_H B_1$ and
$\dim_H B_2$. These turn out to be the intervals
corresponding to the vertices $R_1$, $R_2$, and $R_3$. We list the corresponding
strings:
$R_1 = 222211112^*12112221$, $R_2=1111221112^*12211122$, and
$R_3=221221112^*122111111$.  
We subdivide each of the intervals $J(R_1)$, $J(R_2)$, and $J(R_3)$ following
the same process as before, with a separate decision tree in each case. 

\subsubsection{Refinement of $J(222211112^*12112221)$} 
\label{sss:r1}
The tree depicting continuation of the string $R_1=222211112^*12112221$ is shown in
Figure~\ref{fig:r1tree} and the numerical data for the key intervals is
given in Table~\ref{tab:r1tree} (obtained using Lemma~\ref{lem:intlim}).
Three extra strings are marked for exclusion,
namely $1R_112$, $121R_11$, and $21R_1111$. 

\begin{table}
    \begin{tabular}{|rl|rl|c|}
        \hline
       String &$  \kern-9pt  \alpha_{-k,j}$ & Interval
        $J_0 \supset $ & $  \kern-9pt J(\alpha_{-k,j})$ & Action \\
        \hline
   $ $ & $ \kern-9pt R_12      $  &      $[3.334371, $  &   $ \kern-9pt  3.334381]      $ &A \\
  $2 $ & $ \kern-9pt R_11     $  &       $[3.334376, $  &   $ \kern-9pt  3.33438141]     $ &A \\
  $1 $ & $ \kern-9pt R_112    $  &       $[3.334384049, $  &   $ \kern-9pt  3.3343876]   $ &E \\
 $11 $ & $ \kern-9pt R_111   $  &        $[3.334381, $  &   $ \kern-9pt  3.3343837]      $  &A \\
$121 $ & $ \kern-9pt R_11   $  &         $[3.334384009, $  &   $ \kern-9pt  3.3343876]   $   &E \\
$221 $ & $ \kern-9pt R_1112 $  &         $[3.33438368, $  &   $ \kern-9pt  3.33438401]   $   &S \\
 $21 $ & $ \kern-9pt R_1111  $  &        $[3.3343844, $  &   $ \kern-9pt  3.33438551]   $   &E \\
    \hline
    \end{tabular}
    \medskip
    \caption{Numerical data for the subdivision of the interval $J(R_1)=J(222211112^*12112221)$.
    The corresponding tree is shown in Figure~\ref{fig:r1tree}.
    Strings corresponding to the intervals to the right of $T_2 = 3.334384009$ marked
    for exclusion. }
\label{tab:r1tree}
\end{table}

\begin{figure}
\includegraphics{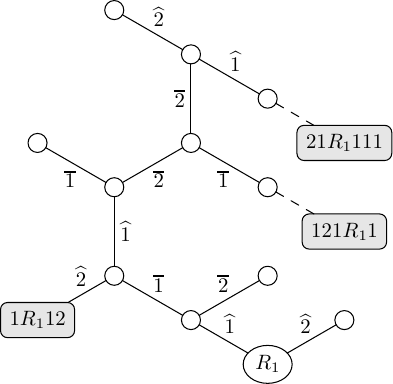}
\caption{Continuation of the string $R_1=222211112^*12112221$.}
\label{fig:r1tree} 
\end{figure}

\subsubsection{Refinement of $J(1111221112^*12211122)$}
\label{sss:r2}
The tree depicting continuation of the string $R_2=1111221112^*12211122$ is shown in
Figure~\ref{fig:r2tree} and the numerical data for the key intervals is
given in Table~\ref{tab:r2tree}. Based on the threshold $T_2$ 
we exclude $2R_21=21111221112^*122111221$ and $12R_222=121111221112^*1221112222$. 
\begin{table}
    \begin{tabular}{|rl|rl|c|}
        \hline
        String & $ \kern-9pt \alpha_{-k,j}$ & Interval
        $J_0 \supset$ & $  \kern-9pt J(\alpha_{-k,j})$ & Action \\
        \hline
    $1 $ & $ \kern-9pt R_2$    &    $[3.334369, $  &   $ \kern-9pt  3.33438361]    $    & A  \\
    $2 $ & $ \kern-9pt R_21$   &    $[3.33438668, $  &   $ \kern-9pt  3.33439261]  $    & E  \\
    $2 $ & $ \kern-9pt R_221$  &    $[3.3343815, $  &   $ \kern-9pt  3.3343847]    $    & S  \\
    $12 $ & $ \kern-9pt R_222$ &    $[3.33438429, $  &   $ \kern-9pt  3.3343856]   $    & E  \\
        \hline
    \end{tabular}
    \medskip
    \caption{Numerical data for the subdivision of the interval $J(R_2) = J(1111221112^*12211122)$. The
    subdivision tree is shown in Figure~\ref{fig:r2tree}. Strings
    corresponding to the intervals to the right of $T_2 = 3.334384009$ marked
    for exclusion. }
\label{tab:r2tree}
\end{table}

\begin{figure}
\includegraphics{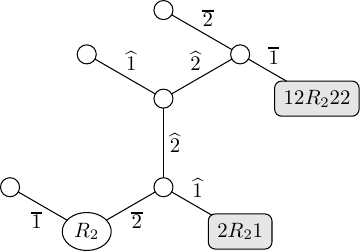}
\caption{Continuation of the string $R_2=1111221112^*12211122$.}
\label{fig:r2tree} 
\end{figure}

\subsubsection{Refinement of $J(221221112^*122111111)$} 
\label{sss:r3}
The tree depicting continuation of the string $R_3=221221112^*122111111$ is shown in
Figure~\ref{fig:r3tree} and the numerical data for the key intervals is
shown in Table~\ref{tab:r3tree}. Based on the threshold $T_2$ 
five additional strings are marked for
exclusion: $R_32$, $21R_3$, $1R_311$, $111R_3122$, and $22R_3112$

\begin{table}
    \begin{tabular}{|rl|rl|c|}
        \hline
        String & $\kern-9pt \alpha_{-k,j}$ & Interval
        $J_0 \supset$ & $ \kern-9pt   J(\alpha_{-k,j})$ & Action \\
        \hline
                 & \kern-9pt $R_32$      & $[3.33439,       $  &   $\kern-9pt  3.334402]$      &  E \\ 
      $21 $  &   $ \kern-9pt R_3$       & $[3.3343856,     $  &   $\kern-9pt  3.334402]$      &  E \\ 
       $1 $  &   $ \kern-9pt R_311$     & $[3.3343866,     $  &   $\kern-9pt  3.3343922]$    &  E \\   
     $211 $  &   $ \kern-9pt R_312$     & $[3.334383,      $  &   $\kern-9pt  3.33438429]$      &  S \\
     $111 $  &   $ \kern-9pt R_3121$    & $[3.33438375,    $  &   $\kern-9pt  3.334384636]$   &  S \\
     $111 $  &   $ \kern-9pt R_3122$    & $[3.3343846357,  $  &   $\kern-9pt  3.3343853]$   &  E \\
      $12 $  &   $ \kern-9pt R_31$      & $[3.334378,      $  &   $\kern-9pt  3.33438459]$     &  S \\
      $22 $  &   $ \kern-9pt R_312$     & $[3.334379,      $  &   $\kern-9pt  3.3343806]$      &  S \\
      $22 $  &   $ \kern-9pt R_3111$    & $[3.3343829,     $  &   $\kern-9pt  3.33438403]$    &  S \\
      $22 $  &   $ \kern-9pt R_3112$    & $[3.3343847,     $  &   $\kern-9pt  3.3343855]$     &  E \\
        \hline
    \end{tabular}
    \medskip
    \caption{Numerical data for the subdivision of the interval $J(R_3)=J(221221112^*122111111)$.
    The corresponding tree is shown in Figure~\ref{fig:r3tree}. Strings
    corresponding to the intervals to the right of $T_2 = 3.334384009$ marked
    for exclusion.}
\label{tab:r3tree}
\end{table}

\begin{figure}
\includegraphics{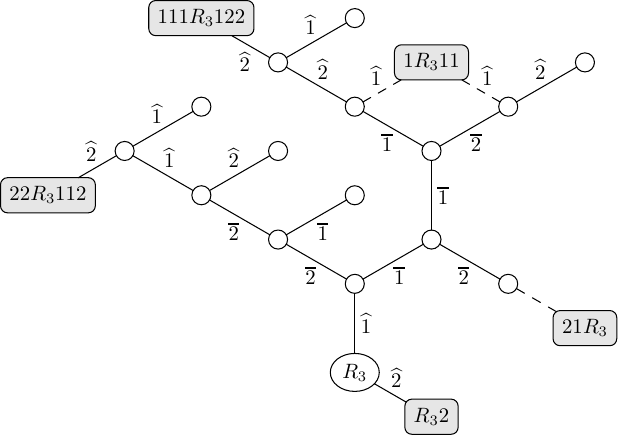}
\caption{Continuation of the string $R_3=1111221112^*12211122$.}
\label{fig:r3tree} 
\end{figure}

\subsubsection{Lower bound on~$t_1$} 
\label{sss:t1low2} In order to confirm the lower bound stated in
Theorem~\ref{t.A}, we collect together numerical data from calculations in~\S\S\ref{sss:r1}--\ref{sss:r3}. 
Consider the Cantor set~$X\subset E_2$ of numbers which continued fraction expansions  
do not contain neither any of the following $24$ strings nor their transposes: 
\begin{itemize}
\item The 14 words proposed by Bumby, listed in~\S\ref{sss:t1up}, cf.
        Table~\ref{tab:bumby}:
        $21212$, $21112121$, $211121222$, $111112121$,
        $12111212$, $111212111$, $21112122112$, $222111212211$,
        $12211112121122$, $112211121221111$, $22111121211221$, 
        $21122111212211$, \\$2111212211112$, $22221111212112222$ ;
    \item The 3 words obtained in~\S\ref{sss:r1} as continuations of $R_1$:
        $12222111121211222112$, $121222211112121122211$, and
        $2122221111212112221111$; 
\item The 2 words obtained in~\S\ref{sss:r2} as continuations of $R_2$:
$21111221112122111221$, $1211112211121221112222$; 
\item The 5 words obtained in~\S\ref{sss:r3} as continuations of $R_3$:
$2212211121221111112$, $21221221112122111111$, $122122111212211111111$, \\$111221221112122111111122$, $22221221112122111111112$
\end{itemize} 

In Subsection~\ref{setX}  the algorithm described in~\cite{PV20} will be
implemented to rigorously establish the bound 
    $\dim_H X < 0.5 - 10^{-8}$.
Summing up, we get the desired lower bound 
\begin{equation}
    \label{eq:dimX}
3.334384009 = T_2 \le t_1. 
\end{equation}

\subsubsection{Upper bound on $t_1$}
\label{sss:t1up2}
We are now ready to justify the upper bound $t_1 \le S_2 = 3.3343849341$ proposed in Theorem~\ref{t.A}. Following the method explained in
\S\ref{ss:aub}, we need to modify the set $X$, increasing its dimension, so that 
the right end point of a non-excluded interval is no smaller than~$S_2$. 
Therefore from the intervals marked for exclusion in Tables~\ref{tab:r1tree}, \ref{tab:r2tree}, \ref{tab:r3tree} we choose the
shortest ones which contain~$S_2$. These turn out to be the intervals
corresponding to the strings 
\begin{align*}
 1R_112 & = 1222211112^*1211222112,       &&  
 221R_1111  = 221222211112^*12112221111,  \\
 12R_222  & =  121111221112^*1221112222, &&  
 121R_111  =  121222211112^*1211222111.  
\end{align*}
We proceed to study their subintervals applying Lemma~\ref{lem:intlim} while excluding all intervals to the
right of the value
$$
 T_3:=3.3343846357 \in (T_2, S_2).
$$
The analysis of the first interval $J(1R_112)$ is relatively simple. 
More precisely, it breaks into 
\begin{align}
J( 2 \,& 1R_112 )\subset [3.334386, 3.3343876] > T_3, \mbox{ and }
\label{eq:yw2} \\
J(11 \,& 1R_112 )\subset [3.33438473, 3.3343858] > T_3 \label{eq:yw3} \\
J(21 \,& 1R_112 )\subset [3.334384049, 3.33438484] < T_3. \notag
\end{align}
Following the approach explained in the beginning of \S\ref{ss:abumby}, 
we exclude the string $21R_1112$ corresponding to the first of them, since every element is
larger than~$T_3$.
 
Similarly, $J(12R_222)$ breaks into
\begin{align}
    J(1\, & 12R_222 )\subset [3.33438429, 3.3343849341] < T_3, \ \mbox{ and }
    \label{eq:yw6} \\ 
    J(2\, & 12R_222 )\subset [3.3343851,  3.3343856] > T_3. \label{eq:yw1}
\end{align}
We exclude the string $212R_222$ corresponding to the second interval, since it
lies to the right of $T_3=3.3343846357$.

The third interval $J(221R_1111)$ subdivides into 
\begin{align}
    J(221R_1111\,& 1)\subset [3.33438448, 3.334384762] \ni T_3, \  \mbox{ and }
    \notag\\
J(221R_1111\,& 2)\subset J(21 R_11112) \subset [3.33438488, 3.33438551] > T_3.
\label{eq:yw4}
\end{align}

Finally, $J(121R_111)$ decomposes as 
\begin{align}
    J( 121R_111 &\, 1)\subset [3.3343848, 3.3343856] > T_3 \label{eq:yw5} \\
J( 121R_111 &\, 2)\subset [3.334384009, 3.33438445] < T_3. \notag
\end{align}

We may now define $Y$ to be the Cantor set of continued fraction expansions in
$\{1,2\}^{\mathbb{N}}$ which do not contain the following $25$ strings (nor their transposes): 
\begin{itemize}
    \item The $14$ words composed by Bumby, listed in \S\ref{ss:abumby} cf.
        Table~\ref{tab:bumby}:  $21212$, $21112121$, $211121222$, $111112121$, $12111212$, $111212111$,
    $21112122112$, $222111212211$, $12211112121122$, $112211121221111$,
    $22111121211221$, $21122111212211$,  \\ $2111212211112$, $22221111212112222$;
\item The $2$ words constructed as continuations of $R_2$:
    $2R_21 =21111221112122111221$ (cf. Table~\ref{tab:r2tree}) and $212R_222 =
    21211112211121221112222$ (see~\eqref{eq:yw1} above), 
\item  The $5$ words obtained in \S\ref{sss:r3} as continuations of $R_3$ : 
    $2212211121221111112$, $21221221112122111111$, $122122111212211111111$,
    $111221221112122111111122$, $22221221112122111111112$ (cf.
    Table~\ref{tab:r3tree} for numerical data on the intervals);
\item  The $4$ words composed as continuations of $R_1$: \\
    $21R_112 = 212222111121211222112$~\eqref{eq:yw2}, $111R_112 =
    1112222111121211222112$ (by~\eqref{eq:yw3}),
    $121R_1111=12122221111212112221111$ (by~\eqref{eq:yw5}), and $21R_11112=21222211112121122211112$
    (by~\eqref{eq:yw4}). 
\end{itemize} 
then the fact that a rigorous
estimate in Subsection~\ref{setY} gives that
$\dim_H Y > 0.5+10^{-8}$ allows to conclude that 
\begin{equation}
\label{eq:dimY}
t_1\le S_2 = 3.3343849341,
\end{equation} 
which is the right end point of the non-excluded interval corresponding to
$112R_222 = 1121111221112^*1221112222$~(see~\eqref{eq:yw6}). 

The inequalities~\eqref{eq:dimX} and~\eqref{eq:dimY} complete the proof of
Theorem~\ref{t.A}.

\section{  Bounds on $\dim_H (M\setminus L)$}
\label{s.M-L-new}
In this section we establish our second main result 
\begin{theorem}\label{t.B} 
The Hausdorff dimension of the difference of Markov and Lagrange spectra satisfies
$$
0.537152 < \dim_H(M\setminus L) < 0.796445
$$
\end{theorem}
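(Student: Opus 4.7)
The plan is to attack the two inequalities independently, since they rest on complementary philosophies: the upper bound is driven by a combinatorial cover of $M\setminus L$, while the lower bound exhibits an explicit subset of $M\setminus L$ of controlled dimension. Throughout, I will systematically translate questions about Markov and Lagrange values into combinatorial questions about bi-infinite words in $(\mathbb N^*)^{\mathbb Z}$ via the Perron characterization from the introduction, and then into dimension questions about Gauss--Cantor sets to which the algorithm developed in \S\ref{s.PV-new} (refining \cite{PV20}) can be applied.

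For the upper bound, I would follow the scheme of \cite{MatheusMoreira}. The key structural fact is that if $v = m(\alpha)\in M\setminus L$, then the supremum in the definition of $m(\alpha)$ is attained but not realized as a limsup, so the central block of $\alpha$ where $\lambda_0$ achieves $m(\alpha)$ is, in a precise sense, combinatorially isolated. First I would localize: cover $M\setminus L$ by pieces $(M\setminus L)\cap I$ for a short, well-chosen list of intervals $I$ (likely refining the partition into $(-\infty,3.06)$, $(3.06,\sqrt{12})$, $(\sqrt{12},\sqrt{13})$, \dots, $(4.4984,\sqrt{21})$, $(\sqrt{21},+\infty)$ used in previous work), paying special attention to the subtle rational accumulation at $4.5$ flagged in the remark in the introduction. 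On each piece, I would classify admissible central blocks of $\alpha$ achieving the Markov value inside $I$, and use Lemma~\ref{lem:intlim}-style estimates to describe all possible prefixes/suffixes as elements of Gauss--Cantor sets $K^{\pm}_I$ of sequences avoiding explicit finite lists of forbidden strings. This produces a cover of the corresponding slice of $M\setminus L$ by arithmetic sums (plus an integer) of the form $K^-_I + K^+_I$, possibly intersected with a ``Cantor set of the gaps''; a standard sum-set bound $\dim_H(A+B)\leq\dim_H A + \overline{\dim_B} B$ then yields an upper bound $\dim_H((M\setminus L)\cap I)\leq \dim_H K^-_I + \dim_H K^+_I$. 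The final step is to run the improved dimension algorithm of \S\ref{s.PV-new} on each Gauss--Cantor set $K^\pm_I$ with enough precision so that the largest sum over all pieces is below $0.796445$.

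For the lower bound, the plan is to produce an explicit Gauss--Cantor set $Z \subset M\setminus L$ (or, after a bi-Lipschitz projection, a set whose image lies in $M\setminus L$) with $\dim_H Z > 0.537152$. Concretely, one constructs a self-similar family of bi-infinite sequences $\alpha$ whose central block is a fixed word $w_0$ on which $\lambda_0$ attains a strictly larger value than on every shift, and whose left and right tails range over a Gauss--Cantor set $G$ of sequences with bounded digits (for example digits in $\{1,2\}$ with suitable avoidances) chosen so that $\lambda_0(\sigma^n\alpha) < \lambda_0(\alpha) - \delta$ for all $n\neq 0$ uniformly. By construction every such Markov value is attained only once, hence lies in $M\setminus L$; and the resulting set of Markov values is bi-Lipschitz equivalent to a set of the form $K + K$ for an explicit Gauss--Cantor set $K$, whose dimension is estimated from below by the algorithm in \S\ref{s.PV-new}. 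The lower bound on $\dim_H (M\setminus L)$ then follows from $\dim_H(K+K)\geq \min(2\dim_H K,1)$ via Moreira's sum-set results.

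The main obstacle is the upper bound. Controlling the dimension of the ``Cantor set of the gaps'' and of the Gauss--Cantor sets $K^\pm_I$ to sufficient precision requires a much longer list of forbidden words than the one used in Theorem~\ref{t.A}, and the alphabet must allow larger digits in the intervals containing $\sqrt{13},\sqrt{14},\dots,\sqrt{21}$, so the transfer operators underlying \S\ref{s.PV-new} become high-dimensional and their determinants expensive to approximate rigorously. Getting each slice's bound well below $0.796445$ while keeping the number of slices manageable is essentially a delicate optimization between combinatorial depth (to tighten forbidden-word lists) and the computational cost of the dimension algorithm; this is the step I expect to be by far the most technical, and it is the one that forces the computer-assisted nature of the proof.
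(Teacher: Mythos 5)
Your overall architecture (explicit subset for the lower bound, combinatorial cover plus dimension algorithm for the upper bound) matches the paper's in spirit, but both halves of your proposal contain a genuine gap at the decisive step.

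For the lower bound, the inference ``every such Markov value is attained only once, hence lies in $M\setminus L$'' is false. That the supremum of $\lambda_0$ along the orbit of $\alpha$ is attained exactly once (and strictly dominates all shifts) says nothing about whether $m(\alpha)$ belongs to $L$: membership in $L$ means $m(\alpha)$ is the Lagrange value of \emph{some} sequence, equivalently (Cusick--Flahive, Ch.~3, Thm.~2) a limit of Markov values of \emph{periodic} sequences, and a priori these have nothing to do with $\alpha$ itself. Excluding this requires a local-uniqueness and non-approximability argument that is the hard content of the constructions in \cite{MatheusMoreira} and \cite{book2021}; the paper avoids redoing it by quoting \cite[\S2.5.4]{book2021}, which identifies $\dim_H((M\setminus L)\cap(3.7,3.71))$ with the dimension of an explicit Gauss--Cantor set $\Omega$, and then only computes $\dim_H\Omega>0.537152$ with the algorithm of \S\ref{s.PV-new}. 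As written, your construction only shows $m(\alpha)\ne\ell(\alpha)$, not $m(\alpha)\notin L$.

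For the upper bound, your operative inequality $\dim_H((M\setminus L)\cap I)\le \dim_H K^-_I+\dim_H K^+_I$ cannot work: in the range $(\sqrt{13},\sqrt{21})$ the relevant Gauss--Cantor sets have dimension up to about $0.709$, so this bound is essentially the bound for $M\cap I$ itself and exceeds $1$. The mechanism that makes $M\setminus L$ strictly smaller is the connection dichotomy of Lemma~\ref{l.key}: choosing nested symmetric transitive subshifts $\Sigma(B)\subset\Sigma(C)$ with $m<x$ on $\Sigma(B)$, any $\gamma$ with $m(\gamma)\in(M\setminus L)\cap(x,y)$ must \emph{fail} to connect to $B$ in at least one direction, which forces all admissible continuations on that side into finitely many gap patterns $X_j$ of $K(B)$; the resulting ``Cantor set of the gaps'' $K_{gap}$ has dimension $s\approx 0.08$--$0.15$, and the bound actually used is $\dim_H K(C)+s$. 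In your write-up the gap set appears only as ``possibly intersected with'' and is dropped from the final inequality, and no analogue of Lemma~\ref{l.key} (nor of its proof via shadowing by periodic orbits built from the transitivity of $\Sigma(B)$) is supplied; without it the whole upper-bound strategy does not close.
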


\subsection{Lower bounds}\label{ss.lower-bound-M-L}
It was shown in~\cite[\S2.5.4]{book2021} that $\dim_H((M\setminus L) \cap (3.7,3.71))$
coincides with the dimension of a certain Gauss--Cantor set~$\Omega$ with
complicated structure. 
Implementing the algorithm described in~\S\ref{s.PV-new}, we obtain an estimate
$\dim_H\Omega=0.537152\dots$ (see \S~\ref{Omegaset} for computation notes).  
A combination of these two results gives the best lower bound on $M\setminus L$ so
far:
$$
\dim_H ((M \setminus L) \cap (\sqrt{13}, 3.84))\geq \dim_H\Omega > 0.537152.
$$

\subsection{Upper bounds}
Recall that Freiman and Schecker independently showed circa 1973 that see, e.g.~\cite{CF}
$$
[\sqrt{21},+\infty)=L\cap[\sqrt{21},+\infty)=M\cap[\sqrt{21},+\infty).
$$
More recently, it was shown in~\cite{MatheusMoreira}
and~\cite{PV20} that $\dim_H((M\setminus L)\cap (\sqrt{5}, \sqrt{13})) <
0.73$. Hence in order to establish an upper bound of $0.796455$, 
it suffices to study $M\setminus L$ within the interval $(\sqrt{13},\sqrt{21})$.
 
Let us now set out the strategy which we will employ for the rest of this section.
We consider a partition of $(\sqrt{13},\sqrt{21})$ into several small
intervals $(x,y)$ and study the intersections $(M\setminus L)\cap
(x, y)$. To find an upper bound for the Hausdorff dimension of $(M\setminus L)\cap
(x, y)$, we continue to develop the ideas from~\cite{MatheusMoreira}. 

Very roughly speaking, we select two transitive subshifts of finite type $B\subset C\subset(\mathbb{N}^*)^{\mathbb{Z}}$ with $m(\alpha)<x$ for all $\alpha\in B$ and any $\beta\in(\mathbb{N}^*)^{\mathbb{Z}}$ with $m(\beta)<y$ belongs to $C$. We require that $B$ and $C$ are symmetric in the sense that $K(B)=K^-(B)$ and $K(C) = K^-(C)$, where $K(A):=\{[0;\alpha_1, \alpha_2,\dots] \mid (\alpha_n)_{n\in\mathbb{Z}}\in A\}$
and $K^-(A):=\{[0;\alpha_{-1},\alpha_{-2},\dots]\mid (\alpha_n)_{n\in\mathbb{Z}}\in
A\}$ stand for the unstable and stable Gauss--Cantor sets associated to a given subshift of finite type $A\subset (\mathbb{N}^*)^{\mathbb{Z}}$. 

At this stage, we want to employ a shadowing lemma type argument to get that, up to transposition, any sequence $\zeta$ with $m(\zeta)\in(M\setminus L)\cap (x,y)$ has the property that if $N$ is large, $n\geq N$, $\tau$ is a finite string and $\alpha$, $\alpha'$ are infinite strings with distinct first elements such that the two sequences $\dots \zeta_{-N}\dots \zeta_n\tau\alpha$ and $\dots \zeta_{-N}\dots \zeta_n\tau\alpha'$ have Markov values in $(M\setminus L)\cap (x,y)$, then the unstable Cantor set $K(B)=\{[0;\theta_1,\theta_2,\dots]:(\theta_n)_{n\in\mathbb{Z}}\in B\}$ of $B$ doesn't intersect the interval $[[0;\alpha], [0;\alpha']]$. In particular, by taking $\tau=\varnothing$, the allowed continuations of $\zeta$ with $m(\zeta)\in (M\setminus L)\cap (x,y)$ live in a small ``Cantor set'' $K_{gap}$ in the gaps of $K(B)$, so that $\textrm{dim}_H((M\setminus L)\cap (x,y))\leq \textrm{dim}_H(K(C))+\textrm{dim}_H(K_{gap})$. 

As it turns out, the rest of this section relies on the formalisation of the idea of the previous paragraph based on a version of Lemma 6.1 of \cite{MatheusMoreira}. 

\begin{definition}
    \label{def:cplus}
    Consider two transitive and symmetric subshifts of finite type $\Sigma(B)
    \subset \Sigma(C)$. 
     Let $\alpha\in \Sigma(C)$ be a sequence with
$m(\alpha) = \lambda_0(\alpha) = m \in M$. We say that $\alpha$ {\it
connects positively} to~$B$, if for every $k\in\mathbb{N}$ there exist
a finite sequence~$\tau$ and 
an infinite sequence $\upsilon\in\Sigma^{+}(B)$ such that for
$\tilde{\alpha}:=\dots \alpha_{-2}\alpha_{-1}\alpha_0\dots \alpha_{k}
\tau\upsilon$ we have 
\begin{equation}
    \label{eq:m<m}
m(\tilde{\alpha})<m(\alpha)+2^{-k}.
\end{equation}
We say that $\alpha$
{\it connects negatively} to~$B$ if the reversed sequence $\alpha^t$ connects
positively to~$B$. 
\end{definition}
\begin{remark} Observe that we can replace~$2^{-k}$ in~\eqref{eq:m<m}
    by any sequence converging to~zero, or, in other words, the
    inequality~\eqref{eq:m<m} can be replaced by  
    \begin{equation}
        \label{eq:m<m2}
        \lim_{k\to\infty} \inf_{\substack{\tau\text{ finite word in }C, \\
        \upsilon \in\Sigma^{+}(B)}} m(\dots a_{-2}a_{-1}a_0^*\dots a_{k}
        \tau\upsilon)=m.  
    \end{equation}
\end{remark}
 The following equivalent definition is slightly more elaborate, but more 
 useful for our purposes. 
 \begin{manualdefinition}{3.2$^{\prime}$}
     \label{def:32p}
   Let $\Sigma(B) \subset \Sigma(C)$ be two transitive and symmetric subshifts of finite type. 
    We say that $\alpha \in \Sigma(C)$ connects positively to~$B$ if
    for every $k\in\mathbb{N}$ there exist a finite sequence $\tau$ and
    a pair of infinite sequences $\upsilon_C \in\Sigma^{+}(C)$ and
    $\upsilon_B \in\Sigma^{+}(B)$ such
    that the concatenation $\tilde{\alpha}:=\upsilon^t_C \alpha_{-k,k}
    \tau\upsilon_B$ satisfies $m(\tilde\alpha)<m(\alpha)+2^{-k}$.    
\end{manualdefinition}
 The advantage of 
    this more complicated alternative definition is that for each~$k$ the
    hypothesis is formulated in terms of the finite subsequence $\alpha_{-k,k}$. Notice that if $\alpha$ \emph{does not} connect to $B$, then there exists a fixed positive value of $k$ for which the condition above fails. In the sequel, instead of Lemma 6.1 in \cite{MatheusMoreira}, we shall use the following statement.

\begin{lemma}
    \label{l.key} 
    Consider two transitive and symmetric subshifts of finite type $\Sigma(B)
    \subset \Sigma(C)$. Let~$x$ be such that $m(\beta) \le x$
    for all $\beta \in \Sigma( B)$. 
    Suppose that a sequence $\gamma \in\Sigma(C)$ satisfies $m(\gamma) =
    \lambda_0(\gamma) = m > x$ and connects positively and negatively to~$B$. Then
    $m \in L$. 
\end{lemma}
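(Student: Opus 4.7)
The plan is to construct a bi-infinite sequence $\delta \in \Sigma(C)$ whose Lagrange value equals $m$, which certifies $m\in L$. The idea is to paste together arbitrarily good finite approximations of $\gamma$ at widely spaced positions, filling the gaps with long blocks in $\Sigma(B)$; transitivity and symmetry of $\Sigma(B)$ will allow the pieces to be glued into an admissible sequence of $\Sigma(C)$.

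First, using Definition~\ref{def:32p}, for each $k\ge 1$ I would introduce the ``sandwich'' sequence
$$\delta_k := (\upsilon_{B,k}^{-})^{t}\,(\tau_k^{-})^{t}\,\gamma_{-k,k}\,\tau_k^{+}\,\upsilon_{B,k}^{+},$$
where the positive connection supplies $\tau_k^+$ and $\upsilon_{B,k}^+\in\Sigma^+(B)$, and the negative connection supplies $\tau_k^-$ and $\upsilon_{B,k}^-\in\Sigma^+(B)$. I would then show $m(\delta_k)\le m+2^{-k}+O(\lambda^k)$ for some fixed $\lambda<1$ coming from the Gauss map contraction. The estimate is obtained by comparing, for $n\ge 0$, the shift $\sigma^n\delta_k$ with the positive-connection sequence $\hat\gamma_k^+:=(\upsilon_{C,k}^+)^t\gamma_{-k,k}\tau_k^+\upsilon_{B,k}^+$: the right tails coincide verbatim and the left tails agree on the $n+k$ central $\gamma$-symbols, so continued-fraction contraction yields $|\lambda_0(\sigma^n\delta_k)-\lambda_0(\sigma^n\hat\gamma_k^+)|=O(\lambda^{n+k})$, while $\lambda_0(\sigma^n\hat\gamma_k^+)\le m(\hat\gamma_k^+)<m+2^{-k}$. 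The case $n<0$ is handled symmetrically using the negative-connection sequence.

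Next, I would assemble $\delta$ by choosing a rapidly increasing sequence of positions $N_1<N_2<\ldots$ and embedding the block $(\tau_k^-)^t\gamma_{-k,k}\tau_k^+$ at position $N_k$. Immediately on either side of the $k$-th insertion I place long finite prefixes or suffixes of $\upsilon_{B,k}^\pm$; between consecutive insertions, transitivity of the SFT $\Sigma(B)$ (and its symmetry, which guarantees that reversed $B$-tails are themselves $B$-admissible) yields $B$-admissible bridging words of arbitrary length. I let the B-paddings grow so fast that $\delta$ agrees with $\delta_k$ on a window of width $W_k\to\infty$ around the $k$-th insertion, while between insertions the local window lies entirely within $\Sigma(B)$.

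Finally, I would verify $\ell(\delta)=m$. For the lower bound, at position $N_k$ the sequence $\delta$ coincides with $\gamma$ on the $2k+1$ central symbols, so $\lambda_0(\sigma^{N_k}\delta)\to\lambda_0(\gamma)=m$, giving $\ell(\delta)\ge m$. For the upper bound, near the $k$-th insertion the matching-window argument gives $\lambda_0(\sigma^n\delta)\le m(\delta_k)+O(\lambda^{W_k})\le m+2^{-k}+O(\lambda^k)+O(\lambda^{W_k})$, while at positions far from every insertion the window lies inside $\Sigma(B)$, so $\lambda_0(\sigma^n\delta)\le m(\beta)\le x<m$ up to an exponentially small error. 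Together these bounds yield $\limsup_{n\to\infty}\lambda_0(\sigma^n\delta)=m$. The main technical obstacle is the continued-fraction bookkeeping at the three kinds of boundaries---between the central block, the transition words $\tau_k^\pm$, and the $B$-tails---and the strict inequality $m>x$ is precisely what provides the margin to absorb all the exponentially small errors that arise in the connection comparisons and in the gluing.
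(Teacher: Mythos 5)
Your construction is correct in outline, but it takes a genuinely different route from the paper. The paper does not build a single sequence realizing $m$ as a Lagrange value; instead it invokes the classical characterization of $L$ as the set of limits of Markov values of \emph{periodic} sequences (Theorem~2, Chapter~3 of Cusick--Flahive) and, for each $k$, manufactures one periodic point $P_k$ by cyclically repeating the block $\gamma_0\dots\gamma_k\,\tau\,(\upsilon^k\ast\widetilde\upsilon^k)\,\gamma_{-k}\dots\gamma_{-1}$, where $\upsilon^k,\widetilde\upsilon^k$ are length-$k$ truncations of the $B$-tails supplied by the positive and negative connections and $(\upsilon^k\ast\widetilde\upsilon^k)$ is a $B$-admissible word containing both, furnished by transitivity. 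The continued-fraction contraction bound $|[0;\xi,\alpha']-[0;\xi,\alpha'']|<2^{1-k}$ then gives $|m(P_k)-m|\le 2^{2-k}$, and the citation absorbs all remaining $\limsup$ bookkeeping. Your approach replaces the citation by a direct verification of $\ell(\delta)=m$ for an aperiodic concatenation, which is more self-contained but forces you to carry out exactly the boundary analysis you flag at the end: for shifts near the edge of the agreement window with $\delta_k$ the two-sided comparison degenerates on one side, and you must switch to comparing with a genuine point of $\Sigma(B)$ (extending the locally $B$-admissible window to a bi-infinite $B$-point), where the strict inequality $x<m$ absorbs the error. The same two-case analysis (compare with the connecting sequence near the $\gamma$-block, compare with $\Sigma(B)$ inside the bridge) is implicitly present in the paper's one-line estimate $\lambda_0(\sigma^j P_k)\le m+2^{2-k}$, so the two arguments use identical ingredients; the periodic-orbit formulation simply packages them more economically. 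One small point worth making explicit in your write-up: symmetry of $\Sigma(B)$ is what makes the reversed padding $(\upsilon_{B,k+1}^-)^t$ adjacent to the next insertion $B$-admissible, so that transitivity can supply the bridge --- this mirrors the paper's use of transitivity to produce $(\upsilon^k\ast\widetilde\upsilon^k)$.
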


\begin{proof} By Theorem~2 in Chapter~3 of Cusick--Flahive book~\cite{CF}, it is
    sufficient to show that $m=\lim\limits_{k\to\infty}m(P_k)$ where~$P_k$
    is a sequence of periodic points in~$\Sigma(C)$.  

    Since $\gamma$ connects positively and negatively to~$B$, there exist finite
    sequences $\tau, \widetilde\tau$ and infinite sequences $\upsilon, \widetilde
    \upsilon \in\Sigma^+(B)$ such that 
    $$
    m(\dots \gamma_{-2}\gamma_{-1}\gamma_0\dots
    \gamma_{k} \tau\upsilon)<m+2^{-k}\mbox{  and  } m(\widetilde \upsilon^t
    \widetilde\tau \gamma_{-k}\dots \gamma_0 \gamma_1 \gamma_2 \dots)<m+2^{-k}.
    $$
 
    Let $\upsilon^k:=\upsilon_1\dots\upsilon_k$ and 
    $\widetilde\upsilon^k: = \widetilde \upsilon_k \dots \widetilde
    \upsilon_1$ be the segments of $\upsilon$ and $\widetilde \upsilon^t$
    respectively. 
    By transitivity of $\Sigma(B)$, there exists $\beta \in \Sigma(B)$ which
    contains non-overlapping occurrencies of the strings $\upsilon^k$ and
    $\widetilde \upsilon^k$ in this order. Let us denote by
    $(\upsilon^k\ast\widetilde\upsilon^k)$ a finite substring of $\beta$ which begins with 
    $\upsilon^k$ and terminates with $\widetilde \upsilon^k$. 

   We next want to consider the periodic point $P_k\in \Sigma$ obtained by
   infinite concatenation of the finite block  
   $$
   \gamma_0 \dots \gamma_{k}\tau(\upsilon^k\ast\widetilde\upsilon^k)
   \gamma_{-k}\dots \gamma_{-1}.
   $$
   Recall that for any finite sequence $\xi=\xi_1\dots\xi_k$ of positive integers and for
   any pair of sequences $\alpha^{\prime}, \alpha^{\prime\prime} \in (\mathbb N^*)^{\mathbb N}$
   we have
   $|[0;\xi,\alpha^\prime]-[0;\xi,\alpha^{\prime\prime}]|<2^{1-k}$.
   Therefore for any  $j \in \mathbb Z$ we get
$\lambda_0(\sigma^j(P_k))\leq m + 2^{2-k}$ 
and $\lambda_0(P_k)>\lambda_0(\gamma)-2^{2-k} = m-2^{2-k}$.

In particular, $m=\lim\limits_{k\to\infty} m(P_k)$. This completes the argument. 
\end{proof}

\begin{remark}
    \label{rem:MtoL}
    Assume that $\Sigma(B) \subset \Sigma(C)$ are two transitive symmetric
    subshifts and let $x$ be such that $m(\beta) \le x$ for all $\beta \in
    \Sigma(B)$. Consider a sequence $\gamma \in \Sigma(C)$ with $m(\gamma) =
    \lambda_0(\gamma) = m > x$. 
    Then for any finite sequence $\tau$ and half-infinite sequence $\upsilon \in
    \Sigma^{+}(B)$ directly from definition of Lagrange and Markov
    numbers we get
$$
 \limsup_{j\to+\infty}\lambda_0(\sigma^j(\dots \gamma_{-2}\gamma_{-1}\gamma_0\dots
\gamma_{k} \tau\upsilon))<m,
$$
where $\sigma$ is the Bernoulli shift. Thus, if we want to get that $m(\dots \gamma_{-2}\gamma_{-1}\gamma_0\dots
\gamma_{k} \tau\upsilon)<m+2^{-k}$, then it suffices to check that  
\begin{equation*}
\lambda_0(\sigma^j(\dots \gamma_{-2}\gamma_{-1}\gamma_0\dots \gamma_{k}
\tau\upsilon))\!<\!m+2^{-k} 
\end{equation*}
for finitely many values of $j$, namely, for all $0\le j\le k+|\tau|+l$ where $l$ is 
sufficiently large (so that $2^{1-l}\le m-x+2^{-k}$). 
\end{remark}
The following elementary fact is quite useful to us. 
\begin{lemma} 
    \label{lem:tech}
    Let $\Sigma(C)$ be a transitive symmetric
    subshift.  
    Assume that three half-infinite sequences $\beta^1, \beta^2, \beta^3
    \in\Sigma^{+}(C)$ are such that 
    $[0;\beta^1]<[0;\beta^2]<[0;\beta^3]$. Then for all $\alpha \in \Sigma(C)$ 
    and for all $j \le n+1$
    \begin{multline*}
    \lambda_0\left(\sigma^j(\dots \alpha_{-2}\alpha_{-1}\alpha_0\dots
    \alpha_{n} \beta^2)\right) \le 
    \max\left(m(\dots \alpha_{-2}\alpha_{-1}\alpha_0\dots \alpha_{n} \beta^1),
    m(\dots \alpha_{-2}\alpha_{-1}\alpha_0\dots \alpha_{n} \beta^3)\right) .
    \end{multline*}
\end{lemma}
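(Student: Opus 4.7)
The plan is to exploit the monotonicity of continued fraction expansions by observing that $\lambda_0(\sigma^j(\dots\alpha_{-1}\alpha_0\dots\alpha_n\beta))$ depends on the tail $\beta$ only through the real number $y:=[0;\beta_1,\beta_2,\dots]$, and that this dependence is strictly monotone. Setting $\gamma^i:=\dots\alpha_{-1}\alpha_0\dots\alpha_n\beta^i$ for $i=1,2,3$, I would split
$$\lambda_0(\sigma^j(\gamma^i))=[\gamma^i_j;\gamma^i_{j+1},\dots]+[0;\gamma^i_{j-1},\gamma^i_{j-2},\dots].$$
For $j\le n+1$ the second summand reduces to $[0;\alpha_{j-1},\alpha_{j-2},\dots]$, which is independent of $i$. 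The first summand is either $[\alpha_j;\alpha_{j+1},\dots,\alpha_n,\beta^i_1,\beta^i_2,\dots]$ when $j\le n$, or $[\beta^i_1;\beta^i_2,\dots]$ when $j=n+1$. In each case, the standard convergent identity shows that, viewed as a function of $y$, the first summand has the form $(py+p')/(qy+q')$ with $|pq'-p'q|=1$, and is therefore strictly monotone on $(0,1)$.

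Since the hypothesis $[0;\beta^1]<[0;\beta^2]<[0;\beta^3]$ places $y_2:=[0;\beta^2]$ between $y_1$ and $y_3$, the monotonicity yields that the first summand evaluated at $\beta^2$ lies between its values at $\beta^1$ and $\beta^3$, and is hence bounded above by the maximum of the two. Adding the common second summand gives
$$\lambda_0(\sigma^j(\gamma^2))\le\max\bigl(\lambda_0(\sigma^j(\gamma^1)),\lambda_0(\sigma^j(\gamma^3))\bigr),$$
and since $\lambda_0(\sigma^j(\gamma^i))\le m(\gamma^i)$ directly from the definition of the Markov value, the conclusion follows. The only care needed is the boundary case $j=n+1$, where the past half of $\sigma^j(\gamma^i)$ ends precisely at $\alpha_n$ and so is trivially independent of $\beta^i$; the same monotonicity argument applies because $[\beta^i_1;\beta^i_2,\dots]=1/y_i$ is a strictly decreasing function of $y_i$. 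Beyond this minor bookkeeping, the proof is entirely routine, so I do not anticipate any substantive obstacle.
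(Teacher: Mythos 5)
Your proof is correct: the decomposition of $\lambda_0(\sigma^j(\cdot))$ into a past part independent of the tail and a future part that is a M\"obius function (with determinant $\pm1$, hence strictly monotone) of $y=[0;\beta^i]$, followed by the trivial bound $\lambda_0(\sigma^j(\gamma^i))\le m(\gamma^i)$, is exactly the intended argument. The paper states this lemma as an ``elementary fact'' without proof, and your write-up supplies the standard justification, including the correct handling of the boundary case $j=n+1$.
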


We will use Lemmas~\ref{l.key} and~\ref{lem:tech} and
Remark~\ref{rem:MtoL} in order to estimate Hausdorff
dimensions of $(M\setminus L)\cap(x,y)$ in the following way. Recall that~\cite[Lemma 6, Chapter 1]{CF}  
for any $m\in M$ there exists a sequence $\alpha$ such that $\lambda_0(\alpha) =
m(\alpha)$. Therefore to study $(M \setminus L) \cap (\sqrt{13},\sqrt{21})$ we may consider 
$$
Y:=\left\{\alpha \in \{1,2,3,4\}^{\mathbb Z} \mid m(\alpha)=\lambda_0(\alpha)\in
M\setminus L\right\}.
$$ 
In order to prove that $\dim_H(M\setminus L)\leq d$, it suffices to consider the cylinder sets $V_n(\alpha):=\left\{\tilde\alpha \in \{1,2,3,4\}^{\mathbb Z} \mid 
\tilde\alpha_j=\alpha_j, \, -n\le j\le n \right\}$ and 
to show that for every $\alpha \in Y$, there is $n\in \mathbb N$ such that  
$$
\dim_H(m(V_n(\alpha)\cap Y))\le d. 
$$ 
In this direction, we will associate (see Tables~\ref{tab:B1sets}
and~\ref{tab:Csets}) to an interval $(x,y)$ two symmetric
transitive subshifts of finite type $\Sigma(B) = \Sigma(B_x)\subset
\Sigma(C) = \Sigma(C_y)\subset(\mathbb{N}^*)^{\mathbb{Z}}$ such that 
\begin{itemize} 
    \item $m(\beta)<x$ for all $\beta \in \Sigma( B)$; and
    \item for all $\gamma\in(\mathbb{N}^*)^{\mathbb{Z}}$ such that $m(\gamma)<y$
        we have $\gamma \in \Sigma(C)$.  
\end{itemize}
If $m(\alpha) =
\lambda_0(\alpha) = m \in M\setminus L$, then by Lemma~\ref{l.key}, $\alpha$
doesn't connect neither positively nor negatively to~$B$. Suppose without loss of generality
that it doesn't connect positively to~$B$. Then by Definition~\ref{def:32p}
there exists $k\in\mathbb{N}$ such that, for any $N\geq k+2$, any finite sequence $\tau$ and infinite sequences
$\upsilon_C \in\Sigma^{+}(C)$ and $\upsilon_B \in\Sigma^{+}(B)$ the
concatenation $\widetilde{\alpha}= \upsilon_C^t \alpha_{-N,N} \tau\upsilon_B$
satisfies
$m(\widetilde \alpha)\ge m+2^{-k}\geq m+2^{-N+2}$.

At this point, we will proceed as follows. In the remainder of this section, for each interval $(x,y)$ introduced below, we will construct\footnote{In most cases below, $X_j$ is a \emph{pair} of finite sequences (e.g., $X_1=\{23, 1133\}$ in \S\ref{ss.384-392}), but sometimes we use larger finite sets (e.g., one of the $X_j$ in \S\ref{ss.44984-sqrt21} is $X_j=\{34313131, 344434, 213131\}$). In principle, we could explicitly list all $X_j$ appearing below, but, for the sake of simplicity of exposition, we will refrain from doing so: in other words, the relevant sets $X_j$ will always be \emph{implicit} in our subsequent discussions.} a finite collection $X_1, \dots, X_r$ of finite
sets of finite sequences over $\{1,2,3,4\}$ with the following property: if $x<m<m+2^{-N+2}<y$ and, for some $n\ge N$, a sequence $\upsilon_C^t \alpha_{-N,n}$ has continuations $\upsilon^1, \upsilon^2 \in
\{1,2,3,4\}^{\mathbb N}$ with \emph{different} subsequent term (of index $n+1$) leading to Markov values which are smaller than $m+2^{-N+2}$, then there is $X_j$ (depending only on $\alpha_{-N,n}$) such that the initial segment of \emph{any} $\upsilon \in \{1,2,3,4\}^{\mathbb N}$ with the
property that $m(\upsilon_C^t \alpha_{-N,n}\upsilon) < m + 2^{-N+2}$ belongs
to~$X_j$ (these elements tend to live on gaps of $K(B)$).

Notice now that $V_N(\alpha)\cap Y$ is contained in the set of sequences $\beta=\upsilon_1^t \alpha_{-N,N} \upsilon_2$ such that $m(\beta)<m+1/2^{N-2}$. Hence, if $s>0$ is such that, for all $X_1, \dots, X_r$ and all positive integers $b_1,\dots,b_n$, we have
$$\sum_{\tau\in X_j}|I(b_1,\dots,b_n,\tau)|^{s} \leq |I(b_1,\dots,b_n)|^{s},$$
where $I(a_1,\dots,a_k)=\{[0;a_1, \cdots, a_k,\rho] : \rho > 1\}$, then Markov values in $m(V_N(\alpha)\cap Y)$ belong to the arithmetic sum of $K(C)$ with a set $K_{gap}$ whose Hausdorff dimension is at most $s$, and thus its Hausdorff dimension is at most $d=\dim_H(K(C))+s$ (by a classical mass transference principle, see e.g. ~\cite[Proposition E.1]{book2021}).

We can now make a first choice of disjoint subintervals of $(\sqrt{13},\sqrt{21})$ with their
corresponding subshifts~$\Sigma(B)$, which we will subdivide further in the next subsections: cf. Table \ref{tab:B1sets} below. Note that these choices of $\Sigma(B)$ are \emph{simpler} than the original choices in \cite{MatheusMoreira} (and this is possible because Lemma \ref{l.key} is more flexible than \cite[Lemma 6.1]{MatheusMoreira}).
\begin{table}[h!]
\begin{tabular}{ |c| rl |  c |  l | }
\hline
$n$ & Interval & \kern-9pt$R_n$ & $B_n$ & \qquad  $\mathcal F_n$  \\ 
 \hline
1 & $(\sqrt{13}$, & $\kern-9pt3.92)$    & $ 1, 2 $ & $ \phantom{\liftl}  $ \\
\hline
2 &  $(3.92$, & $\kern-9pt4.32372)$     & $ 1, 2, 3 $ & $    13 $,  $  31 $ \\
\hline
3 & $(4.32372$, & $\kern-9pt4.4984)$    & $ 1, 2, 3 $ & $    131 $  \\
\hline
4 & $(4.4984$, & $\kern-9pt\sqrt{21})$   & $ 1, 2, 3 $ & $ 1313 $, $3131$  \\
\hline
\end{tabular}
\medskip
\caption{ Subshifts $\Sigma(B_n) = \{\beta \in 
B_n^{\mathbb Z} \mid \alpha \hbox{ has no substring from } \mathcal F_n \}$. }
\label{tab:B1sets}
\end{table}

Also, we collect together in Table~\ref{tab:Csets} the subshifts $C_y$ and the rigorous upper bounds 
on $\dim_H K(C_y)$ (derived from the same method as before, described in~\S\ref{s.PV-new}) we need for the sequel. 

\begin{table}
    \begin{tabular}{ |c| rl |  c |  l |  l |}
\hline
$n$ & Interval & \kern-9pt$S_n$ &$\mathcal A_n$ & \qquad \qquad \quad
$\mathcal F_n$ & $\dim_H K(C_n)$ \\ 
 \hline
1 & $(\sqrt{5}$, & $\kern-9pt3.042)$    & $ 1, 2 $ & $    121 $,   $  212  $,  $ 2111222 $, $ 2221112 $& $
 0.346453  \phantom{\liftl}  $ \\
 \hline
2 & $(\sqrt{13}$, & $\kern-9pt3.84)$    & $ 1, 2, 3 $ & $    13 $,   $  31  $ & $
 0.573961  \phantom{\liftl}  $ \\
\hline
3 &  $(3.84$, & $\kern-9pt3.92)$         & $ 1, 2, 3 $ & $    131 $,  $  313 $,  $
 231 $,  $132  $,  $  312  $, $213$ & $  0.594179  $ \\
\hline
4 & $(3.92$, & $\kern-9pt4.01)$         & $ 1, 2, 3 $ & $    131 $,  $  313 $,  $ 2312 $,  $2132 $ & $  0.643354  $ \\
\hline
5 & $(4.01$, & $\kern-9pt4.1165)$         & $ 1, 2, 3 $ & $    131 $ &  $0.666993$   \\
\hline
\multirow{2}{*}{6} & \multirow{2}{*}{ $(4.1165$,} & \multirow{2}{*}{$\kern-9pt4.1673)$}       &
\multirow{2}{*}{$ 1 $,  $  2 $,  $  3 $} & $1313$,  $
3131 $,  $  1312 $,  $  2131 $, & \multirow{2}{*}{$0.6694154  $} \\
& & & &  $  13111 $,   $  11131 $ &  \\
\hline
7 & $(4.1673$, & $\kern-9pt4.2527275 )$     & $1, 2, 3$ & $1313$, $3131$, $1312$,
 $2131$ & $  0.677846  $ \\ 
\hline
8 & $(4.2527275$, & $\kern-9pt4.32372)$     & $ 1 $,  $  2 $,  $  3     $ & $1313$,
 $3131$,  $21312 $ & $    0.691289  $ \\
\hline 
\multirow{3}{*}{9} &\multirow{3}{*}{  $(4.32372$,} & \multirow{3}{*}{$\kern-9pt4.385)$ }  &
\multirow{3}{*}{$1, 2, 3$}  &  $ 31313 $,  $  21313 $, $  31312 $,  $  21312 $,
& \multirow{3}{*}{$0.694718$} \\ 
& & & & $  1113131 $,  $  1313111 $,  $  3131112 $,  & \\
& & & &  $2111313 $, $  3131113 $,  $  3111313   $ &   \\
\hline 
$10$ & $(4.385$, & $\kern-9pt \sqrt{20})$      & $ 1 $,  $  2 $,  $  3     $ & $
31313 $,  $  31312 $,  $  21313  $,  $  121312 $,  $  213121 $ & $    0.697493  \phantom{\liftl}   $ \\
\hline 
$11$ & $(\sqrt{20}$, &\kern-9pt $4.4984)$     & $ 1 $,  $  2 $,  $  3 $ & 
$  31313   $ & $   0.704213  \phantom{\liftl}     $ \\
\hline 
$12$ & $(4.4984$, &\kern-9pt $4.513)$     & $ 1 $,  $  2 $,  $  3 $,  $  4  $ & $
14 $,  $  41 $,  $  24 $,  $  42 $,  $  343 $,  $  31313   $ & $   0.704700  \phantom{\liftl}     $ \\
\hline
$13$ & $(4.527$, &\kern-9pt $4.55)$     & $ 1 $,  $  2 $,  $  3 $,  $  4  $ & $
14 $,  $  41 $,  $  24 $,  $  42 $,  $  3433 $,  $  3343 $,  $  3434 $,  $  4343   $ & $   0.708245  \phantom{\liftl}     $ \\
\hline
$14$ & $(4.4984$, & $\kern-9pt\sqrt{21})$     & $ 1 $,  $  2 $,  $  3 $,  $  4  $ & $
14 $,  $  41 $,  $  24 $,  $  42   $ & $   0.709394  \phantom{\liftl} $ \\
\hline
\end{tabular}
\medskip
\caption{ Subshifts $\Sigma(C_n) = \{\alpha \in 
\mathcal{A}_n^{\mathbb Z} \mid \alpha \hbox{ has no substring from } \mathcal F_n \}$ used in our analysis, and dimension of
$K(C_n)$ calculated using the method from~\S\ref{s.PV-new}.}
\label{tab:Csets}
\end{table}


We are now ready to proceed to the detailed analysis of the sets $K_{gap}$ constructed below to
analyse different parts of $M\setminus L$. However, for the sake of completeness, let us briefly postpone this to the next subsections while closing the current discussion with an illustration of the method for the region $(M\setminus L)\cap(\sqrt{5},\sqrt{13})$. 

Take $C_1\subset\{1, 2\}^{\mathbb{Z}}$ where $121$, $212$, $2111222$ and $2221112$ are forbidden. Notice that $\lambda_0(12^*1)>3.15$, in the sense that if $\underline{a}=(a_n)_{n\in\mathbb{Z}}\in\Sigma(\mathcal{A}_1)$ and $(a_{-1},a_0,a_1)=(1,2,1)$ then $\lambda_0(\underline{a})>3.15$. Indeed, in this case, we have $\lambda_0(\underline{a})\ge [2;1,\overline{1,2}]+[0;1,\overline{1,2}]>3.15$. We also have $\lambda_0(21^*2)\ge [2;1,2,\overline{2,1}]+[0;\overline{2,1}]>3.06$ and $\lambda_0(222^*1112)\ge [2;2,2,\overline{2,1}]+[0;1,1,1,2,\overline{2,1}]>3.042$ (and so, by symmetry, $\lambda_0(21112^*22)>3.042$). These inequalities imply that $M\cap (\sqrt{5},3.042] \subset 2+K(C_1)+K(C_1)$, and thus 
$$\dim_H((M\setminus L)\cap (\sqrt{5},3.042]) \leq \dim_H(M\cap (\sqrt{5},3.042])\le 2\dim_H(K(C_1))<0.693,$$ since $\dim_H(K(C_1))<0.3465$ (as it can be checked with the method from ~\S\ref{s.PV-new}). 

Now let $(\mu,\nu)=(3.042,\sqrt{13})$. Here we take $C=\{1, 2\}^{\mathbb{Z}}$ and $B\subset\{1, 2\}^{\mathbb{Z}}$ where $121$, $212$ and $21112$ are forbidden. Note that if $\underline{b}\in \Sigma(B)$, then $m(\underline{b})\le [2;2,\overline{1,1,1,2,2,2}]+[0;1,1,1,1,1,\overline{2,2,2,1,1,1}]<3.041$. Given $\underline{a}\in Y$ with  $m=m(\underline{a})\in (\mu,\nu)$, we observe that if $N\ge 5$, then $m+1/2^{N-2}\le\sqrt{12}+1/2^3<\nu$. By the previous discussion (cf. Lemma \ref{l.key}), there is an integer $k$ (which we may assume to be at least $3$) such that, for any $N\ge k+2$, any finite sequence $\tau$ and infinite sequences $\gamma\in\Sigma^{+}(C)$, $\theta\in\Sigma^{+}(B)$, if $\hat{\underline{a}}=\gamma^t a_{-N}\dots a_0^*\dots a_{N} \tau\theta$, then $m(\hat{\underline{a}})\ge m+1/2^k\ge m+1/2^{N-2}$. 

Suppose that for some $n\ge N$, a sequence $\gamma^t a_{-N}\dots a_0^*\dots a_{n}$ has continuations with different subsequent term (of index $n+1$) whose Markov value are smaller than $m+1/2^{N-2}$ - in this case this means that this sequence has such a continuation with $a_{n+1}=1$ and another one with $a_{n+1}=2$. We \emph{claim} that these continuations should be of the type $a_n\alpha_n = a_n112\alpha_{n+3}$ and $a_n\beta_n = a_n221\beta_{n+3}$ thanks to the presence of the continuations $\overline{1122}$ and $\overline{2211}$.
Indeed, we have two cases:

$\bullet$ If $a_n1$ can be continued with $a_{n+2}a_{n+3}\ne 12$, since $[0;1,a_{n+2},a_{n+3}]>[0;1,1,2]$, it follows from  Lemma \ref{lem:tech} that the Markov values centered at $a_k$, with $k\le n$ and at $a_{n+1}=1$ are smaller than $m+1/2^{N-2}$; by Remark \ref{rem:MtoL}, it is enough to verify that $[2;2,\overline{1,1,2,2}]+[0;1,1,a_n,\dots]\le [2;2,\overline{1,1,2,2}]+[0;1,1,\overline{1,2}]<3.021<m$ in order to conclude that the Markov value of $\gamma^t a_{-N}\dots a_0^*\dots a_{n}\overline{1122}$ is smaller than $m+1/2^{N-2}$ and get the desired contradiction. 

$\bullet$ If $a_n2$ can be continued with $a_{n+2}a_{n+3}\ne 21$, since $[0;2,a_{n+2},a_{n+3}]<[0;2,2,1]$, it follows from  Lemma \ref{lem:tech} that the Markov values centered at $a_k$, with $k\le n$ and at $a_{n+1}=2$ are smaller than $m+1/2^{N-2}$;  by Remark \ref{rem:MtoL}, it is enough to verify that $[2;\overline{1,1,2,2}]+[0;2,a_n,\dots]\le [2;\overline{1,1,2,2}]+[0;2,\overline{2,1}]<3.01<m$ in order to conclude that the Markov value of $\gamma^t a_{-N}\dots a_0^*\dots a_{n}\overline{1122}$ is smaller than $m+1/2^{N-2}$ and derive again a contradiction. 

At this point, we recall that it was shown in \cite{MatheusMoreira} that, for $s=0.174813$, and all positive integers $b_1,\dots,b_n$, we have\footnote{Here and in the sequel, we use the well-known formula $|I(a_1,\dots,a_k)|=\frac{1}{q_k(q_k+q_{k-1})}$ where $q_j$ stands for the denominator of $[0;a_1,\dots, a_j]$.}
$$|I(b_1,\dots,b_n,1,1,2)|^{s}+|I(b_1,\dots,b_n,2,2,1)|^{s} \leq |I(b_1,\dots,b_n)|^{s}.$$
Thus, $\dim_H((M\setminus L)\cap (3.042,\sqrt{13}))\le 0.174813+\dim_H(E_2)<0.174813+0.531281=0.706094$. 

Hence, 
\begin{eqnarray*}
& &\dim_H((M\setminus L)\cap (-\infty,\sqrt{13})) = \\ &=&\max\{\dim_H((M\setminus L)\cap (-\infty,3.042],\dim_H((M\setminus L)\cap (3.042,\sqrt{13})\}\\ &\le&\max\{\dim_H(M\cap (-\infty,3.042],\dim_H((M\setminus L)\cap (3.042,\sqrt{13})\} \\ &\le&\max\{0.693,0.706094\}=0.706094 
\end{eqnarray*} 

\subsection{Improvement of the upper bounds in the region $(\sqrt{13}, 3.84)$}
As specified in Table~\ref{tab:Csets}, we choose
$\Sigma(C_2)  = \left\{ \alpha\in\{1,2,3\}^{\mathbb Z} \mid 13 \mbox{ and } 31 \mbox{ are not substrings of } \alpha \right\}$ and $\Sigma(B_1)=\{1,2\}^{\mathbb{Z}}$ to show that if $\alpha
\in Y$ and $m(\alpha) \in M \setminus L$ then there are two possibilities for 
the sequences $\alpha_n=(\upsilon^1_n, \upsilon^1_{n+1},\dots)$ and $\beta_n=(\upsilon^2_n, \upsilon^2_{n+1},\dots)$ with $\upsilon_n^1 \ne \upsilon_n^2$
corresponding Markov values in $(M\setminus L)\cap (\sqrt{13}, 3.84)$: 
\begin{itemize}
    \item[(A)] $\alpha_n=3\alpha_{n+1}$
        and $\beta_n=2\beta_{n+1}$ (i.e., $\upsilon^1_n=3$, $\upsilon_n^2=2$)
    \item[(B)] $\alpha_n=2\alpha_{n+1}$ and
        $\beta_n=1\beta_{n+1}$ (i.e., $\upsilon^1_n=2$, $\upsilon_n^2=1$)
\end{itemize} 
where $\alpha_{n+k}:=(\upsilon_{n+k}^1,\upsilon_{n+k+1}^1,\dots)$ and $\beta_{n+k}:=(\upsilon_{n+k}^2,\upsilon_{n+k+1}^2,\dots)$.

Let us first look at (A). It continues with $\beta_n=21\beta_{n+2}$ and, in
fact, we see that $21\beta_{n+2}=\overline{21}$ because $2$ appears in odd
positions, $1$ appears in even positions and $13$ is forbidden. Thus 
$[[0;\alpha_n],[0;\beta_n]] \cap
K(B)\neq \varnothing$. 

Let us now look at (B). It continues with $\beta_n=11\beta_{n+2}$. Since $13$ is
forbidden, $11\beta_{n+2}\to\dots\to 11\overline{21} \in K(B_1)$. Thus 
$
[[0;\alpha_n],[0;\beta_n]] \cap K(B_1) \ne
\varnothing. 
$
Therefore it is not possible to have two different continuations which do not
connect to~$B$. Hence $\dim_H K_{gap} = 0$. 

In particular, $\textrm{dim}_H((M\setminus L)\cap (\sqrt{13}, 3.84)) \leq \dim_H
(K(C_1)) \le 0.574$. 

\begin{remark} This estimate should be compared with the inequality $\textrm{dim}((M\setminus L)\cap (\sqrt{13}, 3.84))\geq \textrm{dim}(\Omega)>0.537109$ from \S\ref{ss.lower-bound-M-L}. 
\end{remark}

\subsection{Improvement of the upper bounds in the region $(3.84, 3.92)$}\label{ss.384-392} 

Similarly to~\cite{MatheusMoreira}, we can use $C_3 \subset
\{1,2,3\}^{\mathbb Z}$ where $131, 313, 231, 132$ are forbidden and a certain
block $B$ to show that the continuations of words with values in $(M\setminus
L)\cap (3.84, 3.92)$ are 
\begin{itemize}
\item $33$ and $21$
\item $23$ and $113$ or $1121$
\end{itemize} 

We affirm that the first Cantor set of gaps is trivial. Indeed, if the continuation $21$ is not $\overline{21}$, it must be $(21)^n3$ for some $n\in\mathbb{N}$, a contradiction because $213$ is \emph{forbidden} in this region as
$$[3, 1, 2, \overline{3, 1}] +[0, \overline{3, 1}]>3.95$$

Also, a similar argument shows that the option $1121$ is trivial. Thus, the Cantor set of the gaps in this region consist of the options $23$ and $1133$ (since $131$ and $132$ are forbidden in $C$). It follows that the Cantor set of the gaps has dimension $\dim_H K_{gap}<0.133$ because 
$$0.016134^{0.133}+(1/690)^{0.133}<1.$$
Since $\dim_H K(C_3) < 0.5942$,  we deduce that $\textrm{dim}_H((M\setminus L)\cap(3.84, 3.92)) < 0.5942+0.133=0.7272$. 

\subsection{Refinement of the control in the region $(3.92, 4.01)$} 
\subsubsection{Refinement of the control in the region $(3.92, 3.9623)$} 

Similarly to \cite{MatheusMoreira}, we can use $C\subset\{1,2,3\}^{\mathbb{Z}}$ where $131, 313, 2312, 2132$ are forbidden and a certain block $B$ to show that the continuations of words with values in $(M\setminus L)\cap (3.92, 3.9623)$ are 
\begin{itemize}
\item $331$ and $21$
\item $23$ and $113$ 
\end{itemize} 

Note that in this regime we have 
$$\lambda_0(3^*12)>[3;1,2,3,1,1,1,\overline{3,1}]+[0;3,1,2,1,3,3,\overline{3,1}] > 3.96238$$
so that the strings $312$ and $213$ are forbidden. Similarly, the strings $3231$, $1323$, $2231$, $1322$ are also forbidden. 

Thus, the continuations $331$ and $21$ are not possible in this regime: indeed, given that $213$ is forbidden, the smallest continuation of $21$ would be $\overline{21}$, so that we would be able to connect to the block $B$, a contradiction. 

Next, we affirm that the continuation $23$ and $113$ leads to $231$ and $113$: otherwise, if $232$ or $233$ is an allowed continuation, then we could use the largest continuation $\overline{23}$ to connect to an adequate block $B$, a contradiction. Since  
\begin{eqnarray*}
& & \left(\frac{|I(a_1,\dots, a_n,2,3,1)|}{|I(a_1,\dots, a_n)|}\right)^{0.153}+\left(\frac{|I(a_1,\dots, a_n,1,1,3)|}{|I(a_1,\dots, a_n)|}\right)^{0.153} \\ &\leq& (0.00254)^{0.153}+(1/63)^{0.153} < 1,
\end{eqnarray*}
we deduce that $\textrm{dim}_H((M\setminus L)\cap(3.92, 3.9623)) < 0.643355 + 0.153 = 0.796355$. 

\subsubsection{Refinement of the control in the region $(3.9623, 3.9845)$} 

Similarly to \cite{MatheusMoreira}, we can use $C\subset\{1,2,3\}^{\mathbb{Z}}$ where $131, 313, 2312, 2132$ are forbidden and a certain block $B$ to show that the continuations of words with values in $(M\setminus L)\cap (3.9623, 3.9845)$ are 
\begin{itemize}
\item $331$ and $21$
\item $23$ and $113$ 
\end{itemize}  

Since the strings $3231$ and $1323$ are forbidden in this regime (as $\lambda_0(323^*1)>3.99$), the same argument of the previous subsection says that $23$ and $113$ actually must be $231$ and $113$ where 
\begin{eqnarray*}
& & \left(\frac{|I(a_1,\dots, a_n,2,3,1)|}{|I(a_1,\dots, a_n)|}\right)^{0.153}+\left(\frac{|I(a_1,\dots, a_n,1,1,3)|}{|I(a_1,\dots, a_n)|}\right)^{0.153} \\ &\leq& (0.00254)^{0.153}+(1/63)^{0.153} < 1,
\end{eqnarray*}

Thus, it suffices to analyse the case $\alpha_n=331\alpha_{n+3}$ and $\beta_n=21\beta_{n+2}$. For this sake, note that, in the current region, the strings $1213$ and $3121$ are forbidden because $\lambda_0(3^*121) > [3;1,2,1,\overline{1,3}] + [0;3,1,2,1,\overline{3,1}] > 3.9866$ (as $313$ is forbidden). Also, the strings $23312$ and $33312$ are forbidden because $\lambda_0(333^*12)>\lambda_0(233^*12)>[3;1,2,\overline{3,1}]+[0;3,2,3,2,\overline{3,1}] > 3.98459$ (as $3231$ is forbidden). 

We claim that the $n$th digit $a_n$ (before $\alpha_n$ and $\beta_n$) is $2$ or
$3$: otherwise, we would have a continuation $1\beta_n = 121\beta_{n+2}$
connecting to the block $B$ (as the smallest continuation would be
$\overline{21}$). In view of the fact that $a_n\in\{2, 3\}$, we have that
$a_n\alpha_n = a_n331\alpha_{n+3} = a_n3311\alpha_{n+4}$ (as $313$, $23312$ and
$33312$ are forbidden) and, \emph{a fortiori}, $a_n\alpha_n =
a_n33111\alpha_{n+5}$ thanks to the presence of the continuation
$33\overline{1}$. Indeed, if $a_n3311$ can be continued with $r>1$, since
$[0;3,3,1,1,r]<[0;3,3,1,1,1]<[0;2,1]$, by Remark 3.5, it follows that the Markov
values centered at $a_k$, with $k\le n$ and at $a_{n+1}=3$ are smaller than $m$,
and it is enough to verify that $[3;\overline{1}]+[0;3,a_n,\dots]\le
[3;\overline{1}]+[0;3,\overline{3,1}]<3.93<m$. We will use implicitly this kind
of argument in several forthcoming cases.  Since 
\begin{eqnarray*}
& & \left(\frac{|I(a_1,\dots, a_n,3,3,1,1,1)|}{|I(a_1,\dots, a_n)|}\right)^{0.15}+\left(\frac{|I(a_1,\dots, a_n,2,1)|}{|I(a_1,\dots, a_n)|}\right)^{0.15} \\ &\leq& (2/3619)^{0.15}+(0.0718)^{0.15} < 1,
\end{eqnarray*}
we derive that $\textrm{dim}_H((M\setminus L)\cap(3.9623, 3.9845)) < 0.643355 + 0.153 = 0.796355$. 

\subsubsection{Refinement of the control in the region $(3.9845, 4.01)$} Similarly to \cite{MatheusMoreira}, we can use  $C\subset\{1,2,3\}^{\mathbb{Z}}$ where $131, 313, 2312, 2132$ are forbidden and a certain block $B$ to show that the continuations of words with values in $(M\setminus L)\cap (3.9845, 4.01)$ are 
\begin{itemize}
\item $\alpha_n=331\alpha_{n+3}$ and $\beta_n=21\beta_{n+2}$; 
\item $\alpha_n=23\alpha_{n+2}$ and $\beta_n=113\beta_{n+3}$. 
\end{itemize}   

Let us analyse the first possibility depending on the $n$th digit $a_n$ appearing before $331\alpha_{n+3}$ and $21\beta_{n+2}$:
\begin{itemize}
\item if $a_n=1$, then $\alpha_n=3312\alpha_{n+4}$ thanks to the presence of the continuation $3312\overline{21}$ (which is valid as $\lambda_0(133^*12)<3.984$); 
\item if $a_n\in\{2, 3\}$, then $\beta_n=213\beta_{n+3}$ thanks to the continuation $21331\overline{2}$. 
\end{itemize}

Similarly, we can decompose the second possibility into two subcases depending on the digit appearing before $23\alpha_{n+2}$ and $113\beta_{n+3}$: 
\begin{itemize}
\item if $a_n=1$, then $\alpha_n=231\alpha_{n+3}$ in view of $231\overline{12}$; 
\item if $a_n\in \{2, 3\}$, then $\beta_n=1132\beta_{n+4}$ in view of $1132\overline{12}$. 
\end{itemize}

Since 
\begin{eqnarray*}
& & \left(\frac{|I(a_1,\dots, a_n,3,3,1,2)|}{|I(a_1,\dots, a_n)|}\right)^{0.152}+\left(\frac{|I(a_1,\dots, a_n,2,1)|}{|I(a_1,\dots, a_n)|}\right)^{0.152} \\ &\leq& (1/1504)^{0.152}+(0.0718)^{0.152} < 1,
\end{eqnarray*}
\begin{eqnarray*}
& & \left(\frac{|I(a_1,\dots, a_n,3,3,1)|}{|I(a_1,\dots, a_n)|}\right)^{0.133}+\left(\frac{|I(a_1,\dots, a_n,2,1,3)|}{|I(a_1,\dots, a_n)|}\right)^{0.133} \\ &\leq& (1/255)^{0.133} + (0.0071)^{0.133} < 1,
\end{eqnarray*}
\begin{eqnarray*}
& & \left(\frac{|I(a_1,\dots, a_n,2,3,1)|}{|I(a_1,\dots, a_n)|}\right)^{0.153}+\left(\frac{|I(a_1,\dots, a_n,1,1,3)|}{|I(a_1,\dots, a_n)|}\right)^{0.153} \\ &\leq& (0.0071)^{0.153} + (1/63)^{0.153} < 1,
\end{eqnarray*}
\begin{eqnarray*}
& & \left(\frac{|I(a_1,\dots, a_n,2,3)|}{|I(a_1,\dots, a_n)|}\right)^{0.14}+\left(\frac{|I(a_1,\dots, a_n,1,1,3,2)|}{|I(a_1,\dots, a_n)|}\right)^{0.14} \\ &\leq& (0.0162)^{0.14} + (1/368)^{0.14} < 1,
\end{eqnarray*}
we derive that $\textrm{dim}_H((M\setminus L)\cap(3.9845, 4.01)) < 0.643355 + 0.153 = 0.796355$. 

\subsection{Refinement of the control in the region $(4.01, 4.1165)$}
\subsubsection{Refinement of the control in the region $(4.01, 4.054)$} Similarly to \cite{MatheusMoreira}, we can use  $C=\{1,2,3\}^{\mathbb{Z}}$ and a certain block $B$ to show that the continuations of words with values in $(M\setminus L)\cap (4.01, \sqrt{20})$ are 
\begin{itemize}
\item $\alpha_n=331\alpha_{n+3}$ and $\beta_n=213\beta_{n+3}$; 
\item $\alpha_n=23\alpha_{n+2}$ and $\beta_n=113\beta_{n+3}$. 
\end{itemize} 

Since $\lambda_0(13^*1)>4.1165$, the string $131$ is forbidden in our current regime. Also, $\lambda_0(213^*2)\geq [3;2,\overline{1,3,2,3}]+[0;1,2,\overline{3,1,3,2}] > 4.054$ when $13^*1$ is forbidden. Hence, the first transition extends as $331\alpha_{n+3}$ and $2133\beta_{n+4}$. 

Next, since $131$ is forbidden and $\lambda_0(113^*2\overline{12})<4.0078$, the second transition above is actually $\alpha_n=23\alpha_{n+2}$ and $\beta_n=1132\beta_{n+4}$. This transition extends in two possible ways: 
\begin{itemize}
\item if the digit appearing before is $a_n\in\{1,2\}$, we have $\lambda_0(a_n23^*1\overline{12})<4.0014$ and, hence, the transition becomes $\alpha_n=231\alpha_{n+3}$ and $\beta_n=1132\beta_{n+4}$; 
\item if the digit appearing before is $a_n=3$, we have $\lambda_0(a_n113^*23\overline{2})<4.0026$ and, thus, the transition becomes $\alpha_n=23\alpha_{n+2}$ and $\beta_n=11323\beta_{n+5}$. 
\end{itemize} 

Now, we recall that $\frac{|I(a_1,\dots,a_n,3,3,1)|}{|I(a_1,\dots,a_n)|}\leq \frac{1}{255}$, $\frac{|I(a_1,\dots,a_n,2,1,3,3)|}{|I(a_1,\dots,a_n)|}\leq 0.000641$, $\frac{|I(a_1,\dots,a_n,2,3,1)|}{|I(a_1,\dots,a_n)|}\leq 0.0071$, $\frac{|I(a_1,\dots,a_n,1,1,3,2)|}{|I(a_1,\dots,a_n)|}\leq \frac{1}{368}$, $\frac{|I(a_1,\dots,a_n,2,3)|}{|I(a_1,\dots,a_n)|}\leq 0.0162$, $\frac{|I(a_1,\dots,a_n,1,1,3,2,3)|}{|I(a_1,\dots,a_n)|}\leq \frac{1}{3905}$, and 
$$\max\{(\tfrac{1}{255})^{0.11}+(0.000641)^{0.11}, (\tfrac{1}{368})^{0.129}+(0.0071)^{0.129}, (\tfrac{1}{3905})^{0.117}+(0.0162)^{0.117}\}<1.$$ 
Therefore, $\textrm{dim}_H((M\setminus L)\cap(4.01, 4.054)) < 0.667 + 0.129 = 0.796$ (because the Cantor set of continued fraction expansions in $\{1,2,3\}^{\mathbb{N}}$ which avoid $131$ has dimension $<0.667$). 

\subsubsection{Refinement of the control in the region $(4.054, 4.06326)$} The string $131$ is still forbidden in our current regime and the same argument of the previous subsection can be employed to treat the second transition. Thus, it remains only to analyse the first transition $\alpha_n=331\alpha_{n+3}$ and $\beta_n=213\beta_{n+3}$. 

If the digit appearing before the first transition is $a_n=1$, we get a valid continuation $\lambda_0(a_n33^*133\overline{12})<4.0468$, so that the first transition becomes $\alpha_n=3313\alpha_{n+4}$ and $\beta_n=213\beta_{n+3}$. 

If the digit appearing before the first transition is $a_n=2$, we claim that $a_n2132$ is forbidden: indeed, $\lambda_0(213^*2b_m) > 4.1$ when $b_m\in\{2,3\}$, $\lambda_0(213^*211)>4.072$, $\lambda_0(a_n213^*212)>4.067$, $\lambda_0(a_n213^*2133) > 4.06352$, and 
$$\lambda_0(a_n213^*21321) \geq [3;2,1,3,2,1,2,\overline{3,1}]+[0;1,2,2,\overline{1,3,2,3}] > 4.06326$$ 
(here we used that $213211$ and $131$ are forbidden), so that all continuations of $a_n2132$ are large. Thus, the first transition becomes $\alpha_n=331\alpha_{n+3}$ and $\beta_n=2133\beta_{n+4}$. 

If the digit appearing before the first transition is $a_n=3$, we have that $a_n3313$ is also forbidden (as $\lambda_0(a_n33^*13)>4.0679$) and $a_n3312\overline{12}$ is a valid continuation (as $\lambda_0(a_n33^*12\overline{12}) < 4.03845$), so that the first transition becomes $\alpha_n=33121\alpha_{n+3}$ and $\beta_n=213\beta_{n+3}$. 

Since $\frac{|I(a_1,\dots,a_n,3,3,1,3)|}{|I(a_1,\dots,a_n)|}\leq \tfrac{1}{2592}$, $\frac{|I(a_1,\dots,a_n,2,3,1)|}{|I(a_1,\dots,a_n)|}\leq 0.0071$, $\frac{|I(a_1,\dots,a_n,3,3,1)|}{|I(a_1,\dots,a_n)|}\leq \frac{1}{255}$, $\frac{|I(a_1,\dots,a_n,2,1,3,3)|}{|I(a_1,\dots,a_n)|}\leq 0.000641$,  $\frac{|I(a_1,\dots,a_n,3,3,1,2,1)|}{|I(a_1,\dots,a_n)|}\leq \frac{1}{3552}$,  $\frac{|I(a_1,\dots,a_n,2,1,3)|}{|I(a_1,\dots,a_n)|}\leq 0.0071$ and 
$$\max\{(\tfrac{1}{2592})^{0.111}+(0.0071)^{0.111}, (\tfrac{1}{255})^{0.11}+(0.000641)^{0.11}, (\tfrac{1}{3552})^{0.11}+(0.0071)^{0.11}\}<1,$$
we conclude that $\textrm{dim}_H((M\setminus L)\cap(4.054, 4.06326)) < 0.667 + 0.129 = 0.796$. 

\subsubsection{Refinement of the control in the region $(4.06326, 4.0679)$} Once again, $131$ is still forbidden in our current regime, so that we can focus on the first transition $\alpha_n=331\alpha_{n+3}$ and $\beta_n=213\beta_{n+3}$. Actually, the fact that we are looking at Markov values below $4.0679$ makes that the same argument above can still be employed to treat the first transition in the case of the digits appearing before are $a_n\in \{1,3\}$. 

Finally, if the digit appearing before is $a_n=2$, then $a_n33133\overline{12}$ is a valid continuation because the fact that $131$ and $2132b_m$, $b_m\in\{2,3\}$ are forbidden says that 
$$\lambda_0(a_n33^*133\overline{12})\leq [3;1,3,3,\overline{1,2}]+[0;3,2,1,3,2,1,\overline{1,3}]<4.063251.$$
In particular, the first transition becomes $3313\alpha_{n+4}$ and $213\beta_{n+3}$ and we derive that $\textrm{dim}_H((M\setminus L)\cap(4.06326, 4.0679)) < 0.667 + 0.129 = 0.796$. 

\subsubsection{Refinement of the control in the region $(4.0679, 4.1)$} Since $131$ is forbidden here, it suffices to analyse the first transition $\alpha_n=331\alpha_{n+3}$ and $\beta_n=213\beta_{n+3}$. Moreover, the same argument above can still be employed to treat the first transition in the case of the digit appearing before is $a_n=1$. Furthermore, $2132b_m$, $b_m\in\{2,3\}$ is also forbidden here, so that the same argument above also treats the case of the first transition when the digit appearing before is  $a_n=2$. Finally, if the digit appearing before is $a_n=3$, we see that the first transition becomes $331\alpha_{n+3}$ and $2132\beta_{n+4}$ as $a_n213\overline{21}$ is a valid continuation (since $\lambda_0(a_n213^*\overline{21})<4.063582$). Given that $\frac{|I(a_1,\dots,a_n,3,3,1)|}{|I(a_1,\dots,a_n)|}\leq \frac{1}{255}$, $\frac{|I(a_1,\dots,a_n,2,1,3,2)|}{|I(a_1,\dots,a_n)|}\leq 0.00121$, and $(\tfrac{1}{255})^{0.114}+(0.00121)^{0.114}< 1$, we obtain that $\textrm{dim}_H((M\setminus L)\cap(4.06326, 4.0679)) < 0.667 + 0.129 = 0.796$. 

\subsubsection{Refinement of the control in the region $(4.1, 4.1165)$} Using for the last time that $131$ is forbidden, we will again concentrate only on the first transition $\alpha_n=331\alpha_{n+3}$ and $\beta_n=213\beta_{n+3}$. Here, we observe that $33133\overline{12}$ is a valid continuation (as $\lambda_0(33^*133\overline{12}) < 4.0721$), so the first transition becomes $3313\alpha_{n+4}$ and $213\beta_{n+3}$ and we get $\textrm{dim}((M\setminus L)\cap(4.1, 4.1165)) < 0.667 + 0.129 = 0.796$. 

\subsection{Refinement of the control in the region $(4.1165, 4.1673)$}
\subsubsection{Refinement of the control in the region $(4.1165, 4.1271)$} Recall that in the region $(M\setminus L)\cap (4.01, \sqrt{20})$ our task is to analyse the transitions   
\begin{itemize}
\item $\alpha_n=331\alpha_{n+3}$ and $\beta_n=213\beta_{n+3}$; 
\item $\alpha_n=23\alpha_{n+2}$ and $\beta_n=113\beta_{n+3}$. 
\end{itemize} 

We begin by observing that $\lambda_0(313^*1)>4.32372$, $\lambda_0(213^*1)>4.2527275$ (when $3131$ is forbidden), and the dimension of $C=1,2,3$ with $3131$, $2131$, $13111$ and their transposes forbidden is $<0.66942$. Recalling that $33133\overline{12}$ is a valid continuation (as $\lambda_0(33^*133\overline{12}) < 4.0721$), the first transition always becomes $3313\alpha_{n+4}$ and $213\beta_{n+3}$ in the region between $4.1165$ and $4.2527275$. Since $\frac{|I(a_1,\dots,a_n,3,3,1,3)|}{|I(a_1,\dots,a_n)|}\leq \tfrac{1}{2592}$, $\frac{|I(a_1,\dots,a_n,2,1,3)|}{|I(a_1,\dots,a_n)|}\leq 0.0071$, $(1/2592)^{0.111}+(0.0071)^{0.111}<1$, and $0.66942+0.111=0.78042$, the first transition is completely treated in the region between $4.1165$ and $4.2527275$. 

Let us now focus on the second transition. Since $23\overline{1}$ is a valid continuation (as $\lambda_0(23^*\overline{1})<4.06$), the second transition becomes $231\alpha_{n+3}$ and $113\beta_{n+3}$. 

If the digit appearing before is $a_n\in\{1,2\}$, since $\lambda_0(a_n113^*1) > 4.134215$ and $113\overline{23}$ is a valid continuation (as $\lambda_0(113^*\overline{23})<4.079$), the second transition becomes $231\alpha_{n+3}$ and $11323\beta_{n+5}$. Given that $\frac{|I(a_1,\dots,a_n,1,1,3,2,3)|}{|I(a_1,\dots,a_n)|}\leq \frac{1}{3905}$, $\frac{|I(a_1,\dots,a_n,2,3,1)|}{|I(a_1,\dots,a_n)|}\leq 0.0071$, $(1/3905)^{0.11}+(0.0071)^{0.11}<1$, and $0.66942+0.11=0.77942$, we are done. 

If the digit appearing before is $a_n=3$, we observe that $\lambda_0(a_n23^*1b_m)>4.1271$ for $b_m\in\{2,3\}$ and $\lambda_0(23^*111\overline{3})<4.081$, so that the second transition becomes $231113\alpha_{n+6}$ and $113\beta_{n+3}$. Given that $\frac{|I(a_1,\dots,a_n,2,3,1,1,1,3)|}{|I(a_1,\dots,a_n)|}\leq 0.0001$, $\frac{|I(a_1,\dots,a_n,1,1,3)|}{|I(a_1,\dots,a_n)|}\leq 1/63$, $(0.0001)^{0.11}+(1/63)^{0.11}<1$, and $0.66942 + 0.11 = 0.77942$, we are done. In summary, we showed that $\textrm{dim}((M\setminus L)\cap(4.1165, 4.1271)) < 0.78042$. 

\subsubsection{Refinement of the control in the region $(4.1271, 4.12733)$} In view of the arguments of the previous subsection, our task is reduced to discuss the second transition $231\alpha_{n+3}$ and $113\beta_{n+3}$ when the digit appearing before is $a_n=3$. 

Note that $\lambda_0(a_n23^*13)=\lambda_0(323^*13) > 4.199$. Moreover, we claim that all continuations of $a_n2312$ are large. Indeed, $\lambda_0(a_n23^*12b_m)>4.1358$ for $b_m\in\{1,2\}$,  $\lambda_0(a_n23^*123c_m)>4.1296$ for $c_m\in\{2,3\}$, $\lambda_0(a_n23^*1231d_m)>4.1275$ for $d_m\in\{1,2\}$. Since $\lambda_0(a_n23^*123133)>4.12733$, $\lambda_0(23^{**}132)>4.1288$, and $3131$ is forbidden, we conclude that $a_n2312$ has no short continuation. In view of the valid continuation $23111\overline{3}$ (with $\lambda_0(23^*111\overline{3})<4.081$), we see that the second transition becomes $231113\alpha_{n+5}$ and $113\beta_{n+3}$. Hence, we can apply again the argument from the previous subsection to derive that $\textrm{dim}((M\setminus L)\cap(4.1271, 4.12733)) < 0.78042$. 

\subsubsection{Refinement of the control in the region $(4.12733, 4.12762)$} In view of the arguments of the previous subsection, our task is again reduced to discuss the second transition $231\alpha_{n+3}$ and $113\beta_{n+3}$ when the digit appearing before is $a_n=3$. 

If the digit appearing before $a_n$ is $a_{n-1}\in\{1,2\}$, we have $\lambda_0(a_{n-1}a_n113^*\overline{113})<4.1264$, so that the second transition becomes $231\alpha_{n+3}$ and $113113\beta_{n+6}$ (since $1313$, $1312$, $13111$ and $13112$ are forbidden for any sequence with Markov value $<4.134215$). Given that $\frac{|I(a_1,\dots,a_n,2,3,1)|}{|I(a_1,\dots,a_n)|}\leq 0.0071$, $\frac{|I(a_1,\dots,a_n,1,1,3,1,1,3)|}{|I(a_1,\dots,a_n)|}\leq 0.000241$, $(0.0071)^{0.11}+(0.000241)^{0.11}<1$, and $0.66942 + 0.11 = 0.77942$, we are done. 

If the digit appearing before $a_n$ is $a_{n-1}=3$, we have three possibilities.
If the digit before $a_{n-1}$ is $a_{n-2}=3$, we have
$\lambda_0(a_{n-2}a_{n-1}a_n113^*\overline{113})<4.1272999969$ and, hence, the
argument of the previous paragraph can be repeated. If $a_{n-2}=2$, we recall
that $\lambda_0(323^*13)>4.199$,  $\lambda_0(23323^*12) > 4.1277$ (when $3131$
is forbidden) and $\lambda_0(23111\overline{3})<4.081$ to get that the second
transition becomes $231113\alpha_{n+6}$ and $113\beta_{n+3}$. Finally, if
$a_{n-2}=1$, then we have three subcases: if $a_{n-3}\in\{2,3\}$,  we note that
$\lambda_0(a_{n-3}a_{n-2}a_{n-1}a_n113^*1)\geq
\lambda_0(2133113^*113113\overline{3132})> 4.1277$ (when $1313$, $1312$, $13111$
and $13112$ are forbidden) and $\lambda_0(113^*\overline{23})<4.079$, so that
the second transition becomes $231\alpha_{n+3}$ and $11323\beta_{n+5}$; if
$a_{n-3}=1$ and $a_{n-4}\in\{1,2\}$, we get
$\lambda_0(a_{n-4}a_{n-3}a_{n-2}a_{n-1}a_n113^*1)\geq
\lambda_0(21133113^*113113\overline{3132}) > 4.12762$, so that the second
transition still is $231\alpha_{n+3}$ and $11323\beta_{n+5}$; if $a_{n-3}=1$ and
$a_{n-4}=3$, we get $\lambda_0(a_{n-4}a_{n-3}a_{n-2}a_{n-1}a_n23^*12)\geq
\lambda_0(3113323^*123133\overline{31})>4.12762$ (because
$\lambda_0(23^{**}132)>4.1288$), so that the second transition becomes
$231113\alpha_{n+6}$ and $113\beta_{n+3}$. In any event, we conclude that
$\textrm{dim}((M\setminus L)\cap(4.12733, 4.12762)) < 0.78042$. 

\subsubsection{Refinement of the control in the region $(4.12762, 4.134215)$} In view of the arguments of the previous subsection, our task is reduced to discuss the second transition $231\alpha_{n+3}$ and $113\beta_{n+3}$ when the digits appearing before are $a_n=3$, $a_{n-1}=3$ and $a_{n-2}\in\{1,2\}$. 

If $a_{n-2}=2$, we have $\lambda_0(233113^*\overline{113})<4.12751$, and $1313$, $1312$, $13111$ and $13112$ are forbidden on any sequence with Markov value $<4.134215$, so that the second transition becomes $231\alpha_{n+3}$ and $113113\beta_{n+6}$ and we are done. 

If $a_{n-2}=1$ and $a_{n-3}\in\{2,3\}$, we recall that $\lambda_0(323^*13)>4.199$, $\lambda_0(323^*12b_m)>4.1358$ for $b_m\in\{1,2\}$ and $\lambda_0(a_{n-3}a_{n-2}a_{n-1}a_n23^*123133\overline{1}) < 4.127471$, so that the second transition becomes $23123\alpha_{n+5}$ and $113\beta_{n+3}$. Given that $\frac{|I(a_1,\dots,a_n,2,3,1,2,3)|}{|I(a_1,\dots,a_n)|}\leq 0.000111$, $\frac{|I(a_1,\dots,a_n,1,1,3)|}{|I(a_1,\dots,a_n)|}\leq 1/63$, $(0.000111)^{0.111}+(1/63)^{0.111}<1$,  and $0.66942 + 0.111 = 0.78042$, we are done in this case. If $a_{n-2}=1=a_{n-3}$ and $a_{n-4}\in\{1,2\}$, we get $\lambda_0(a_{n-4}a_{n-3}a_{n-2}a_{n-1}a_n23^*123133211\overline{3}) < 4.12761982$ and the second transition still is $23123\alpha_{n+5}$ and $113\beta_{n+3}$, and we are done. If $a_{n-2}=1=a_{n-3}$ and $a_{n-4}=3$, we have $\lambda_0(a_{n-4}a_{n-3}a_{n-2}a_{n-1}a_n113^*113113\overline{32})<4.127618$, so that the second transition becomes $231\alpha_{n+3}$ and $113113\beta_{n+6}$, and we are done. 

In any case, we get that $\textrm{dim}((M\setminus L)\cap(4.127672, 4.134215)) < 0.78042$. 

\subsubsection{Refinement of the control in the region $(4.134215, 4.137519)$} In view of the arguments of the previous subsections, our task is to discuss the second transition $231\alpha_{n+3}$ and $113\beta_{n+3}$ when the digits appearing before are $a_n=2$ and $a_n=3$. For later reference, we remark that the Cantor set $C=1,2,3$ where $1313$, $1312$, $13111$ and their transposes are forbidden has dimension $<0.6694155$. 

If $a_n=2$, we have two possibilities. If the digit appearing before $a_n=2$ is $a_{n-1}\in \{2,3\}$, then $\lambda_0(a_{n-1}a_n113^*1) > 4.143241$ and $\lambda_0(113^*\overline{23})<4.079$, so that the second transition becomes $231\alpha_{n+3}$ and $11323\beta_{n+3}$ and we are done. If the digit before $a_n=2$ is $a_{n-1}=1$, we have $\lambda_0(a_{n-1}a_n23^*13)>4.1837$, $\lambda_0(a_{n-1}a_n23^*121)>4.137519$ (as $1313$, $1312$ and $13111$ are forbidden), and $\lambda_0(1223^*122\overline{12})<4.127$, so that the second transition becomes $23122\alpha_{n+5}$ and $113\beta_{n+3}$. Given that $\frac{|I(a_1,\dots,a_n,2,3,1,2,2)|}{|I(a_1,\dots,a_n)|}\leq 0.00021$, $\frac{|I(a_1,\dots,a_n,1,1,3)|}{|I(a_1,\dots,a_n)|}\leq 1/63$, $(0.00021)^{0.115}+(1/63)^{0.115}<1$,  and $0.67 + 0.115 = 0.785$, we are done. 

If $a_{n}=3$, since $1313$, $1312$ are forbidden and $\lambda_0(3113^*\overline{113})<4.128$, the second transition becomes $231\alpha_{n+3}$ and $11311\beta_{n+5}$. Since $\frac{|I(a_1,\dots,a_n,2,3,1)|}{|I(a_1,\dots,a_n)|}\leq  0.007042603$, $\frac{|I(a_1,\dots,a_n,1,1,3,1,1)|}{|I(a_1,\dots,a_n)|}\leq 1/400$,  $(0.007042603)^{0.1270292}+(1/400)^{0.1270292}<1$, and $0.6694155 + 0.1270292 =0.7964447<0.796445$, we are done. 

In any event, we get that $\textrm{dim}((M\setminus L)\cap(4.134215, 4.137519)) < 0.796445$. 

\subsubsection{Refinement of the control in the region $(4.137519, 4.1407)$} In view of the arguments of the previous subsections, our task is reduced to discuss the second transition $231\alpha_{n+3}$ and $113\beta_{n+3}$ when the digits appearing before are $a_n=2$ and $a_{n-1}=1$. 

If the digit before $a_{n-1}$ is $a_{n-2}\in\{2,3\}$, we have $\lambda_0(1223^*13)>4.1837$ and 
$$\lambda_0(a_{n-2}a_{n-1}a_n23^*121)>4.1409$$ 
(thanks to the fact that $1313$, $1312$, $13111$ are forbidden), so that we are back to the situation in the previous subsection. 

If the digit before $a_{n-1}$ is $a_{n-2}=1$, we have $\lambda_0(a_{n-2}a_{n-1}a_n113^*1)>4.1407$ 
(as $1313$, $1312$, $13111$ are forbidden) and $\lambda_0(2113^*\overline{23})<4.027$, so that the second transition becomes $231\alpha_{n+3}$ and $11323\beta_{n+5}$ and we are done. 

In summary, we get that $\textrm{dim}((M\setminus L)\cap(4.137519, 4.1407)) < 0.796445$. 

\subsubsection{Refinement of the control in the region $(4.1407, 4.1673)$} In view of the arguments of the previous subsections, our task is reduced to discuss the second transition $231\alpha_{n+3}$ and $113\beta_{n+3}$ when the digit appearing before is $a_n=2$ (since $11131$ is forbidden because $\lambda_0(1113^*1)>4.1673$). 

If the digit before $a_{n}$ is $a_{n-1}\in\{2,3\}$, we have $\lambda_0(a_{n-1}a_n23^*12\overline{1}) < 4.1387$ and $\lambda_0(a_{n-1}a_n23^*13)>4.175$, so that the second transition becomes $23121\alpha_{n+5}$ and $113\beta_{n+3}$. Since $\frac{|I(a_1,\dots,a_n,2,3,1,2,1)|}{|I(a_1,\dots,a_n)|}\leq 0.00051$, $\frac{|I(a_1,\dots,a_n,1,1,3)|}{|I(a_1,\dots,a_n)|}\leq \tfrac{1}{63}$, $(0.00051)^{0.122}+(\tfrac{1}{63})^{0.122}<1$, and $0.67 + 0.122 = 0.792$, we are done. 

If the digit before $a_{n}$ is $a_{n-1}=1$, we have two possibilities. If the digit before $a_{n-1}$ is $a_{n-2}=1$, then $\lambda_0(1223^*13)>4.1837$ and $\lambda_0(a_{n-2}a_{n-1}a_n23^*121133\overline{1})<4.13997$  
(as $1313$, $1312$, $13111$ are forbidden), so that the second transition becomes $23121\alpha_{n+5}$ and $113\beta_{n+3}$ and we are back to the situation of the previous paragraph. If the digit before $a_{n-1}$ is $a_{n-2}\in\{2,3\}$, then the facts that $1313$, $1312$ are forbidden, and $\lambda_0(a_{n-2}a_{n-1}a_n113^*\overline{113})<4.13984$ imply that the second transition is $231\alpha_{n+3}$ and $11311\beta_{n+5}$ and we are done.

In summary, we get that $\textrm{dim}((M\setminus L)\cap(4.1407, 4.1673)) < 0.796445$. 

\subsection{Refinement of the control in the region $(4.1673, 4.2527275)$} In view of the arguments of the previous subsections, our task is reduced to discuss the second transition $231\alpha_{n+3}$ and $113\beta_{n+3}$. We shall describe the possible extensions of this transition in terms of the digits appearing before and/or the Markov values of the words. 

If the digit appearing before is $a_n=1$, the second transition becomes $23132\alpha_{n+5}$ and $113\beta_{n+3}$ because $3131$ is forbidden and $\lambda_0(a_n23132\overline{12})<4.1619$. 

If the digit appearing before is $a_n=3$, the facts that $3131$ and $2131$ are forbidden, $\lambda_0(a_n23^*13)>4.1991$, $\lambda_0(a_n23^*1\overline{2})<4.149$, and $\lambda_0(a_n113^*\overline{113})<4.128$ can be used to say that the second transition becomes $2312\alpha_{n+4}$ and $11311\beta_{n+5}$ in the region $(4.1673, 4.199)$. Furthermore, the fact that $\lambda_0(a_n113^*111\overline{12}) < 4.1785$ allows to conclude that the second transition becomes $231\alpha_{n+3}$ and $113111\beta_{n+6}$ in the region $(4.199, 4.2527275)$.  

If the digit appearing before is $a_n=2$, the facts that $3131$ and $2131$ are forbidden, $\lambda_0(a_n23^*13)>4.175$, $\lambda_0(a_n23^*1\overline{2})<4.132$, and $\lambda_0(a_n113^*\overline{113})<4.1521$ can be used to say that the second transition becomes $2312\alpha_{n+4}$ and $11311\beta_{n+5}$ in the region $(4.1673, 4.175)$. Moreover, the fact that $\lambda_0(a_n113^*111) > 4.1857$ and $\lambda_0(a_n113^*11\overline{21})<4.16781$ allows to conclude that the second transition becomes $231\alpha_{n+3}$ and $113112\beta_{n+6}$ in the region $(4.175, 4.1857)$. Hence, it remains to analyse the region $(4.1857, 4.2527275)$ when $a_n=2$. 

If the digit appearing before $a_n=2$ is $a_{n-1}\in\{2,3\}$, the facts that $3131$ and $2131$ are forbidden, $\lambda_0(a_{n-1}223^*1\overline{3}) < 4.18261$, and $\lambda_0(a_n113^*\overline{113})<4.1521$ imply that the second transition becomes $2313\alpha_{n+4}$ and $11311\beta_{n+5}$ in the region $(4.1857, 4.2527275)$. Thus, it suffices to treat the case $a_n=2$ and $a_{n-1}=1$ in the region $(4.1857, 4.2527275)$. 

If the digit appearing before $a_{n-1}a_n=12$ is $a_{n-2}=1$, the facts that $3131$ and $2131$ are forbidden, $\lambda_0(a_{n-2}12113^*111) > 4.189$, and $\lambda_0(a_n113^*11\overline{21})<4.16781$ imply that the second transition becomes $231\alpha_{n+3}$ and $113112\beta_{n+6}$ in the region $(4.1857, 4.189)$. Also, the second transition becomes $2313\alpha_{n+4}$ and $11311\beta_{n+5}$ in the region $(4.189, 4.2527275)$ because $\lambda_0(a_{n-2}1223^*13\overline{3})<4.1881$.  

If the digit appearing before $a_{n-1}a_n=12$ is $a_{n-2}=3$, the facts that $3131$ and $2131$ are forbidden, $\lambda_0(a_{n-2}1223^*13) > 4.1889$, $\lambda_0(a_n23^*1\overline{2})<4.132$ and $\lambda_0(a_n113^*\overline{113})<4.1521$ imply that the second transition becomes $2312\alpha_{n+4}$ and $11311\beta_{n+5}$ in the region $(4.1857, 4.1889)$. Also, the second transition becomes $231\alpha_{n+3}$ and $113111\beta_{n+6}$ in the region $(4.1889, 4.2527275)$ because $\lambda_0(a_{n-2}12113^*1111\overline{32})<4.1865$. 

At this point, it remains only to investigate the region $(4.1857, 4.2527275)$ when the digit appearing before $a_{n-1}a_n=12$ is $a_{n-2}=2$. For this sake, we shall distinguish three subcases. 

\subsubsection{The subcase $a_{n-3}=3$ and $a_{n-2}a_{n-1}a_n=212$} Since $3131$ and $2131$ are forbidden, $\lambda_0(a_{n-3}212113^*111)\geq[3;111\overline{1311}]+[0;11212a_{n-3}\overline{31}]> 4.1876$. Because $\lambda_0(a_n113^*11\overline{21})<4.16781$, we conclude that the second transition becomes $231\alpha_{n+3}$ and $113112\beta_{n+6}$ in the region $(4.1857, 4.1876)$. Moreover, $\lambda_0(a_{n-3}21223^*133132\overline{1})<4.1874$ and $\lambda_0(a_n113^*\overline{113})<4.1521$, so that the second transition becomes $2313\alpha_{n+4}$ and $11311\beta_{n+5}$ in the region $(4.1876, 4.2527275)$. 

\subsubsection{The subcase $a_{n-3}=1$ and $a_{n-2}a_{n-1}a_n=212$} Since $3131$ and $2131$ are forbidden, $\lambda_0(a_{n-3}21223^*13)> 4.1878$. Because $\lambda_0(a_n23^*1\overline{2})<4.132$ and $\lambda_0(a_n113^*\overline{113})<4.1521$, we conclude that the second transition becomes $2312\alpha_{n+4}$ and $11311\beta_{n+5}$ in the region $(4.1857, 4.1878)$. Also, $\lambda_0(a_{n-3}212113^*11113113\overline{32})<4.1873$, so that the second transition becomes $231\alpha_{n+3}$ and $113111\beta_{n+6}$ in the region $(4.1878, 4.2527275)$. 

\subsubsection{The subcase $a_{n-3}=2$ and $a_{n-2}a_{n-1}a_n=212$} If the digit appearing before $a_{n-3}$ is $a_{n-4}\in\{2,3\}$, we have $\lambda_0(a_{n-4}221223^*13)>4.187566$, $\lambda_0(a_n23^*1\overline{2})<4.132$ and $\lambda_0(a_n113^*\overline{113})<4.1521$, so that the second transition becomes $2312\alpha_{n+4}$ and $11311\beta_{n+5}$ in the region $(4.1857, 4.187566)$. Moreover, $\lambda_0(a_{n-4}2212113^*11113113\overline{32}) < 4.187564$, so that the second transition becomes $231\alpha_{n+3}$ and $113111\beta_{n+5}$ in the region $(4.187566, 4.2527275)$. 

If the digit appearing before $a_{n-3}$ is $a_{n-4}=1$, we have $\lambda_0(a_{n-4}2212113^*111)\geq  [3;111131123\overline{1311}]+[0;1121221\overline{13}]>4.187546$ in the region $(4.1857, 4.199)$ because the strings $3131$, $2131$, $1113111$ are forbidden. Thus, the second transition becomes $231\alpha_{n+3}$ and $113112\beta_{n+6}$ in the region $(4.1857, 4.187546)$ since $\lambda_0(a_n113^*11\overline{21})<4.16781$. Moreover, $\lambda_0(a_{n-4}221223^*13313\overline{21}) < 4.187543$, so that the second transition becomes $2313\alpha_{n+4}$ and $11311\beta_{n+4}$ in the region $(4.187546, 4.2527275)$. 

In summary, we showed that the possibilities for the second transition in the region $(4.1673, 4.2527275)$ are $23132\alpha_{n+5}-113\beta_{n+3}$, $2312\alpha_{n+4}-11311\beta_{n+5}$, $231\alpha_{n+3}-113111\beta_{n+6}$, $231\alpha_{n+4}-113112\beta_{n+6}$ and $2313\alpha_{n+4}-11311\beta_{n+5}$. By combining this information with the facts that the Cantor set $C=$ $1,2,3$ with $3131$ and $2131$ forbidden has dimension $<0.67785$, and $0.67785+ 0.118 =0.79585$, we derive that $\textrm{dim}((M\setminus L)\cap(4.1673, 4.2527275)) < 0.79585$. 

\subsection{Refinement of the control in the region $(4.2527275, 4.32372)$} Recall that in the region $(4.01,\sqrt{20})$, we have to investigate the transitions: 
\begin{itemize}
\item $\alpha_n=331\alpha_{n+3}$ and $\beta_n=213\beta_{n+3}$; 
\item $\alpha_n=23\alpha_{n+2}$ and $\beta_n=113\beta_{n+3}$. 
\end{itemize} 

Since $3131$ is forbidden here, $\lambda_0(33^*13\overline{23}) < 4.081$, $\lambda_0(3313^{**}\overline{23})<4.21$, $\lambda_0(23^{***}2)<4$, the first transition becomes $331323\alpha_{n+6}$ and $213\beta_{n+3}$. 

Similarly, since $3131$ is forbidden, $\lambda_0(23^*132\overline{12}) < 4.21271$, $\lambda_0(113^*113\overline{1}) < 4.2022$ and $\lambda_0(113113^*\overline{1}) < 4.18$, so the second transition becomes $23132\alpha_{n+5}$ and $1131\beta_{n+4}$. 

Given that $\frac{|I(a_1,\dots, a_n,3,3,1,3,2,3)|}{|I(a_1,\dots, a_n)|}<\tfrac{1}{160678}$, $\frac{|I(a_1,\dots, a_n,2,1,3)|}{|I(a_1,\dots, a_n)|}<0.0071$, $\frac{|I(a_1,\dots, a_n,2,3,1,3,2)|}{|I(a_1,\dots, a_n)|} < 0.00012$, $\frac{|I(a_1,\dots, a_n,1,1,3,1)|}{|I(a_1,\dots, a_n)|}<\tfrac{1}{144}$, and 
$$(\tfrac{1}{160678})^{0.09}+(0.0071)^{0.09}<1, \quad (0.00012)^{0.1021}+(\tfrac{1}{144})^{0.1021}<1,$$
we conclude that $\textrm{dim}((M\setminus L)\cap(4.2527275, 4.32372)) < 0.6913+0.1021=0.7934$ thanks to the fact that $C=$ $1, 2, 3$ with $1313$, $3131$ and $21312$ forbidden has dimension $<0.6913$. 

\subsection{Refinement of the control in the region $(4.32372, 4.385)$} Similarly to \cite{MatheusMoreira}, we can use $C=\{1,2,3\}^{\mathbb{Z}}$ and a block $B$ to show that the continuations of words with values in $(M\setminus L)\cap (4.01, \sqrt{20})$ are 
\begin{itemize}
\item $\alpha_n=331\alpha_{n+3}$ and $\beta_n=213\beta_{n+3}$; 
\item $\alpha_n=23\alpha_{n+2}$ and $\beta_n=113\beta_{n+3}$. 
\end{itemize} 

For later use, we note that $31313$, $21313$, $31312$, $21312$, $1113131$, $1313111$, $3131112$, $2111313$, $3131113$, $3111313$ are forbidden here, and the corresponding Cantor set has dimension $<0.6948$. 

Let us discuss the first transition. The continuation $21311\overline{3231}$ is valid in the region $(4.3, 4.385)$, so that the first transition becomes $3313\alpha_{n+4}$ and $21311\beta_{n+5}$. 

Let us now investigate the second transition. The validity of the continuations $2313\overline{3}$ and $113\overline{113}$ say that the second transition becomes $2313\alpha_{n+4}$ and $1131\beta_{n+4}$. 

In the region $(4.3353, 4.385)$, the second transition is $231311\alpha_{n+4}$ and $1131\beta_{n+3}$ thanks to the valid continuation $231311311\overline{1323}$ and the fact that $31313$ and $31312$ are forbidden. 

Next, we observe that $313112$, $211313$, $313111$ and $111313$ are forbidden in the region $(4.32372, 4.3353)$. We will analyse the second transition in this region depending on the digits appearing before it. 

If the digit before is $a_n=3$, the second transition becomes $2313\alpha_{n+4}$ and $11313\beta_{n+5}$ in the region $(4.332, 4.3353)$ because the continuation $a_n11313\overline{3132}$ is valid, and it becomes $23132313\alpha_{n+8}$ and $1131\beta_{n+4}$ in the region $(4.32372, 4.332)$ because the string $23131$ is forbidden and the continuation $a_n2313231\overline{3}$ is valid. 

If the digit before is $a_n=2$, the second transition is $2313\alpha_{n+4}$ and $113121\beta_{n+5}$ in the region $(4.32372, 4.3353)$ because the continuation $a_n113121\overline{1323}$ is valid and $211313$ is forbidden. 

If the digits before are $a_{n-1}a_n=11$, the second transition is $2313\alpha_{n+4}$ and $11312\beta_{n+5}$ in the region $(4.32372, 4.3353)$ because $111313$ is forbidden and $a_{n-1}a_n11312\overline{3132}$ is a valid continuation. 

If the digits before are $a_{n-1}a_n=21$, the second transition is $2313\alpha_{n+4}$ and $11312\beta_{n+5}$ in the region $(4.329, 4.3353)$ because $111313$ is forbidden and $a_{n-1}a_n11312\overline{3132}$ is valid, and it becomes $23132313\alpha_{n+8}$ and $1131\beta_{n+4}$ in the region $(4.32372, 4.329)$ since $23131$ is forbidden and $2313231\overline{3}$ is valid.  

If the digits before are $a_{n-1}a_n=31$, the second transition is $23131\alpha_{n+5}$ and $1131\beta_{n+4}$ in the region $(4.332, 4.3353)$ because $a_{n-1}a_n231311311\overline{1323}$ is valid, and it becomes $2313\alpha_{n+4}$ and $1131113\beta_{n+7}$ in the region $(4.32372, 4.332)$ because $a_{n-1}a_n11312$ and $a_{n-1}a_n11313$ are forbidden and $a_{n-1}a_n1131113\overline{3132}$ is valid.  

In summary, we established that the possible transitions are $3313\alpha_{n+4}-21311\beta_{n+5}$, $231311\alpha_{n+6}-1131\beta_{n+4}$, $2313\alpha_{n+4}-11313\beta_{n+5}$, $23132313\alpha_{n+8}-1131\beta_{n+4}$, $2313\alpha_{n+4}-11312\beta_{n+5}$, $2313\alpha_{n+4}-1131113\beta_{n+7}$. Since $0.6948+ 0.10155=0.79635$, we conclude that $\textrm{dim}((M\setminus L)\cap(4.32372, 4.385)) < 0.79635$. 

\subsection{Refinement of the control in the region $(4.385, 4.41)$} Recall that in the region between $4.01$ and $\sqrt{20}$ our goal is to study the transitions:  
\begin{itemize}
\item $\alpha_n=331\alpha_{n+3}$ and $\beta_n=213\beta_{n+3}$; 
\item $\alpha_n=23\alpha_{n+2}$ and $\beta_n=113\beta_{n+3}$. 
\end{itemize} 

Let us discuss the second transition. It extends as $2313111\alpha_{n+7}$ and $1131\beta_{n+4}$ in this region because the strings $31313$, $31312$ and $21313$ are forbidden and the continuations $2313111\overline{1323}$ and $113\overline{113}$ are valid. 

We observe that the Cantor set $C=$ $1,2,3$ with $31313$, $31312$, $21313$, $121312$ and $213121$ forbidden (as they are in this region) has Hausdorff dimension $<0.6975$. 

The first transition becomes $3313111\alpha_{n+7}$ and $2131\beta_{n+4}$ in this region due to the valid continuations $3313111\overline{1323}$ and $2131\overline{1323}$ and to the fact that $331312$ and $31313$ are forbidden. 

In summary, we showed that the possible transitions in our region are $3313111\alpha_{n+7}-2131\beta_{n+4}$, and $2313111\alpha_{n+7}-1131\beta_{n+4}$.

Since $0.6975+ 0.0963 = 0.7938$, we conclude that $\textrm{dim}((M\setminus L)\cap(4.385, 4.41)) < 0.7938$. 

\subsection{Refinement of the control in the region $(4.41, \sqrt{20})$}
The second transition extends as $23131\alpha_{n+5}$ and $11313\beta_{n+5}$ in this region due to the valid continuations $23131\overline{1323}$ and $11313\overline{3132}$.

Let us now discuss the first transition. Since $33131\overline{1323}$ and $2131\overline{1323}$ are valid continuations when the Markov value is $>4.332$, the first transition extends $33131\alpha_{n+5}$ and $2131\beta_{n+4}$ in our region. 

In the region $(4.46151, \sqrt{20})$, we have a valid continuation $331312\overline{3132}$, so that the first transition becomes $331312\alpha_{n+6}$ and $2131\beta_{n+4}$ (as $31313$ is forbidden). Thus, it remains only to treat the region $(4.41, 4.46151)$. 

If the digits appearing before the first transition are $a_{n-1}a_n= 13$, the first transition becomes: 
\begin{itemize}
\item $33131\alpha_{n+5}$ and $21313\beta_{n+5}$ in the region $(4.4608, 4.46151)$ thanks to the valid continuation $21313\overline{3132}$; 
\item $3313111\alpha_{n+7}$ and $2131\beta_{n+4}$ in the region $(4.41, 4.461)$ due to the valid continuation $3313111\overline{1323}$ and the fact that $a_{n-1}a_n331312$ and $31313$ are forbidden.  
\end{itemize}  

If the digits appearing before the first transition are $a_{n-1}a_n\neq 13$, the first transition becomes: 
\begin{itemize}
\item $33131\alpha_{n+5}$ and $213121\beta_{n+5}$ in the region $(4.456, 4.46151)$ thanks to the fact that $a_{n-1}a_n21313$ is forbidden and the validity of the continuation $213121\overline{1323}$; 
\item $3313111\alpha_{n+7}$ and $2131\beta_{n+4}$ in the region $(4.41, 4.459)$ due to the valid continuation $3313111\overline{1323}$ and the fact that $a_{n-1}a_n331312$ and $31313$ are forbidden.  
\end{itemize}  

In summary, we showed that the possible transitions in our region are $331312\alpha_{n+6}-2131\beta_{n+4}$, $33131\alpha_{n+5}-213121\beta_{n+6}$, $3313111\alpha_{n+7}-2131\beta_{n+4}$, $33131\alpha_{n+5}-21313\beta_{n+5}$, and $23131\alpha_{n+5}-11313\beta_{n+5}$. Since $0.7057+  0.0903= 0.796$, we conclude that $\textrm{dim}((M\setminus L)\cap(4.41, \sqrt{20})) < 0.796$. 
 
\subsection{Refinement of the control in the region $(\sqrt{20}, 4.4984)$} 

Similarly to \cite{MatheusMoreira}, we can use $C\subset\{1,2,3, 4\}^{\mathbb{Z}}$ where $14, 41, 24, 42$ are forbidden and a certain block $B$ to show that the continuations of words with values in $(M\setminus L)\cap (\sqrt{20}, 4.4984)$ are 
\begin{itemize}
\item $\alpha_n=4\alpha_{n+1}$ and $\beta_n=3131\beta_{n+4}$, or 
\item $\alpha_n\in\{33131\alpha_{n+5}, 34\alpha_{n+3}\}$ and $\beta_n=2131\beta_{n+4}$, or 
\item $\alpha_n=23\alpha_{n+2}$ and $\beta_n=1131\beta_{n+4}$. 
\end{itemize} 

Note that a sequence containing the strings $343$ or $31313$ has Markov value $>4.52$. In particular, we can refine $C$ into $C=$ $1,2,3, 4$ where $14, 41, 24, 42, 343, 31313$ are forbidden. Note that $\textrm{dim}(C)<0.705$. 

If the sequence $\theta$ in $C$ contains $4$ and it is not $\overline{4}$ (whose Markov value is $\sqrt{20}$), then it contains $43$ and, \emph{a fortiori}, $\theta=\dots443\dots$. 

Suppose that $\lambda_0(\dots44^*3\dots)\geq \lambda_0(\dots4^*43\dots)$, i.e., 
$$4+\alpha+\frac{1}{4+\beta}\geq 4+\beta+\frac{1}{4+\alpha}$$ 
where $\alpha=[0;3,\dots]$. This would imply that $\alpha\geq\beta$, so that 
$$\lambda_0(\dots44^*3\dots)\geq 4+\alpha+\frac{1}{4+\alpha}$$ 
for $\alpha=[0;3,\dots]$. Because the minimal value of $\alpha$ extracted from a sequence $\theta\in C$ is 
$$\alpha\geq [0;\overline{3,1,3,1,2,1}],$$
we would have that 
$$\lambda_0(\dots44^*3\dots)\geq [4;\overline{3,1,3,1,2,1}]+[0;4,\overline{3,1,3,1,2,1}] > 4.4984.$$
Therefore, we can assume that $4$ doesn't appear in sequences $\theta$ producing Markov values in the interval $(\sqrt{20}, 4.4984)$. In particular, the continuations of words with values in $(M\setminus L)\cap (\sqrt{20}, 4.4984)$ are actually 
\begin{itemize}
\item[(i)] $\alpha_n=33131\alpha_{n+5}$ and $\beta_n=2131\beta_{n+4}$, or 
\item[(ii)] $\alpha_n=23\alpha_{n+2}$ and $\beta_n=1131\beta_{n+4}$. 
\end{itemize} 

We affirm that (i) has $\alpha_n=331312\alpha_{n+6}$: indeed, this happens because of the presence of the continuation $3313123\overline{1}$ (which is valid as $\lambda_0(\dots3313^*123\overline{1})\leq 4.463<\sqrt{20}$). Similarly, we affirm that (ii) has $\alpha_n=23131\alpha_{n+5}$ and $\beta_n=113131\beta_{n+6}$: in fact, this happens because of the presence of the continuations $2313\overline{1}$ and $11313\overline{1}$ (which are valid as $\lambda_0(\dots2313^*\overline{1}) < 4.394 < \sqrt{20}$ and $\lambda_0(\dots113^*13\overline{1})<4.42521<\sqrt{20}$). 

Since 
\begin{eqnarray*}
& &\left(\frac{|I(a_1,\dots, a_n, 3,3,1,3,1,2)|}{|I(a_1,\dots, a_n)|}\right)^{0.09} + \left(\frac{|I(a_1,\dots,a_n,2,1,3,1)|}{|I(a_1,\dots, a_n)|}\right)^{0.09} \\ 
&=&\left(\frac{(r+1)}{(53r+173)(72r+235)}\right)^{0.09} + \left(\frac{(r+1)}{(5r+14)(9r+25)}\right)^{0.09} \\ 
&\leq & (1/34691)^{0.09}+(0.003106)^{0.09} < 0.985 < 1 
\end{eqnarray*}
and 
\begin{eqnarray*}
& &\left(\frac{|I(a_1,\dots, a_n, 2,3,1,3,1)|}{|I(a_1,\dots, a_n)|}\right)^{0.09} + \left(\frac{|I(a_1,\dots,a_n,1,1,3,1,3,1)|}{|I(a_1,\dots, a_n)|}\right)^{0.09} \\
&=&\left(\frac{(r+1)}{(19r+43)(34r+77)}\right)^{0.09} + \left(\frac{(r+1)}{(24r+43)(43r+77)}\right)^{0.09} \\ 
&\leq& (0.00031)^{0.09}+(1/3311)^{0.09} < 0.966 < 1
\end{eqnarray*}
for $0<r<1$, we derive that $\textrm{dim}((M\setminus L)\cap(\sqrt{20}, 4.4984)) < 0.705+0.09=0.795$. 

\begin{remark} Even though this fact will not be used here, we note that $4.4984$ is somewhat close to the point $\alpha+\frac{1}{\alpha}=4.49846195\dots$, where 
$$\alpha=[4;31312133\overline{1131312231312111233131212112}],$$ which is the smallest element of the Lagrange spectrum accumulated by Lagrange values of sequences containing the letter $4$ infinitely often: cf. \cite{PMM}. 
\end{remark}

\subsection{Refinement of the control in the region $(4.4984, \sqrt{21})$}\label{ss.44984-sqrt21} Similarly to \cite{MatheusMoreira}, we can use $C\subset\{1,2,3, 4\}^{\mathbb{Z}}$ where $14, 41, 24, 42$ are forbidden and a certain block $B$ to show that the continuations of words $\gamma$ with values in $(M\setminus L)\cap (4.4984, \sqrt{21})$ are 
\begin{itemize}
\item[$(1)$] $\alpha_n=4\alpha_{n+1}$ and $\beta_n=3131\beta_{n+4}$, or 
\item[$(2)$] $\alpha_n\in\{33131\alpha_{n+5}, 34\alpha_{n+3}\}$ and $\beta_n=2131\beta_{n+4}$, or 
\item[$(3)$] $\alpha_n=23\alpha_{n+2}$ and $\beta_n=1131\beta_{n+4}$. 
\end{itemize} 

In the sequel, we shall significantly refine the analysis of these continuations. 

\subsubsection{The case $(1)$ of $\alpha_n=4\alpha_{n+1}$ and $\beta_n=3131\beta_{n+4}$} We affirm that $\alpha_n=44\alpha_{n+2}$ in this situation. In fact, given the nature of $C$, our task is to rule out the other possibility that $\alpha_n=43\alpha_{n+2}$. In this direction, the following lemma (obtained from a direct calculation) will be helpful:

\begin{lemma}\label{l.rational-Lagrange} $[4;\overline{4}]+[0;3,\overline{1,3,1,2}] = \frac{9}{2} = [3;1,3,\overline{4}]+[0;\overline{1,2,1,3}]$. In particular, 
$$9/2=m(\overline{4}3\overline{1312}) = \lim\limits_{n\to\infty} m(\overline{4^n 3 (1312)^n1313})\in L\cap\mathbb{Q}.$$
\end{lemma}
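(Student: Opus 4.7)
The plan is to split into three parts matching the three assertions of the statement. First I would verify the two continued-fraction identities by direct algebraic manipulation. Using $[\overline{4}] = 2+\sqrt{5}$ (the positive root of $x^2 = 4x+1$), we get $[4;\overline{4}] = 2+\sqrt{5}$. From the fixed-point equation $11\eta^2 - 10\eta - 5 = 0$ for $\eta = [\overline{1,3,1,2}]$, one derives $[0;3,\overline{1,3,1,2}] = (5-2\sqrt{5})/2$, and summing gives $9/2$. Symmetrically, $\zeta = [\overline{1,2,1,3}]$ satisfies $11\zeta^2 - 12\zeta - 4 = 0$, yielding $[0;\overline{1,2,1,3}] = \sqrt{5} - 3/2$; combined with the short telescope $[3;\overline{4}] = 1+\sqrt{5}$, $[1;3,\overline{4}] = (3+\sqrt{5})/4$, $[3;1,3,\overline{4}] = 6-\sqrt{5}$, this delivers the second identity.

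Next, to show $m(\overline{4}3\overline{1312}) = 9/2$, I would index so that $\alpha_0 = 3$, $\alpha_{-k}=4$ for $k\geq 1$, and $(\alpha_{4k+1},\ldots,\alpha_{4k+4}) = (1,3,1,2)$ for $k\geq 0$. A direct computation at $n=-1$ gives $\lambda_0(\sigma^{-1}\alpha) = [4;3,\overline{1,3,1,2}] + [0;\overline{4}] = 9/2$ (an algebraic rearrangement of the first identity), hence $m(\alpha)\geq 9/2$. For the reverse inequality I would examine each shift: in the two tails ($n\ll -1$ or $n\gg 0$) the values converge monotonically to $2\sqrt{5} < 9/2$, while only the finitely many transitional positions $n\in\{-2,\ldots,6\}$ require hand-verification. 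The reversal symmetry $\lambda_0(\beta)=\lambda_0(\beta^t)$ forces $n=2$ to also attain $9/2$, corresponding to the second identity, and every other position gives a strictly smaller value.

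Finally, for the Lagrange claim I would set $\alpha^{(n)} := \overline{4^n\,3\,(1312)^n\,1313}$; the trailing $1313$ is chosen so the periodic wrap-around produces an auxiliary transition whose Markov value also approaches $9/2$ via the second identity. Each $\alpha^{(n)}$ is periodic, so $\ell(\alpha^{(n)}) = m(\alpha^{(n)}) \in L$. For each fixed $k$, $\lambda_0(\sigma^k\alpha^{(n)}) \to \lambda_0(\sigma^k\alpha)$ as $n\to\infty$ because ever-longer finite windows match; uniform control on the remaining positions (bounded in the bulk by $2\sqrt{5}$ and near the two transitions by $9/2 + o(1)$) upgrades this to $m(\alpha^{(n)}) \to 9/2$. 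Since $L$ is closed, $9/2 \in L$, and the rationality is manifest. The principal obstacle is the upper bound in the second step: no individual case is hard, but a meticulous shift-by-shift analysis is unavoidable, and one must additionally check that the auxiliary transition introduced by $1313$ in the periodic approximations does not overshoot $9/2$ by more than $o(1)$.
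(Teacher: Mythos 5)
Your proposal is correct and takes the same route as the paper, which offers no argument beyond the parenthetical ``obtained from a direct calculation'': your verification of the two identities (via $11\eta^2-10\eta-5=0$, $11\zeta^2-12\zeta-4=0$, and the short telescopes) checks out, the two positions attaining $9/2$ are exactly the shifts $n=-1$ and $n=2$, and the periodic approximation plus closedness of $L$ is the standard way to place $9/2$ in $L$. One harmless slip: the left-tail values $\lambda_0(\sigma^{-k}\alpha)=[4;4^{k-1},3,\overline{1,3,1,2}]+[0;\overline{4}]$ oscillate around $2\sqrt{5}$ rather than converging monotonically (and some exceed $2\sqrt{5}$), but for $k\ge 2$ the crude bound $4+\tfrac14+(\sqrt{5}-2)<4.49<\tfrac92$ disposes of them all, so the shift-by-shift analysis goes through as you describe.
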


If $\gamma$ has allowed continuations $\alpha_n=43\alpha_{n+2}$ and $\beta_n=3131\beta_{n+4}$, then its Markov value is $<9/2$. Otherwise, its Markov value $m(\gamma)$ would be $>9/2$ (as $9/2\in L$ and $\gamma$ is assumed to give rise to an element of $M\setminus L$), and the previous lemma would permit us to connect $\gamma$ with an adequate block $B$ via $\gamma4^n3\overline{1312}$, a contradiction. 

Now, if $\gamma$ has allowed continuations $\alpha_n=43\alpha_{n+2}$ and $\beta_n=3131\beta_{n+4}$, and its Markov value is $<9/2$, then let us write $\gamma=\theta a_{n+1} \dots$, select $\beta\in\{\theta, \theta 4\}$ with $[\beta^t]\geq [4;\overline{4}]$ and let us consider the Markov value $m(\beta3\rho)<9/2$ of an allowed continuation $\beta3\rho$. If $[0;3\rho]\geq [0;3,\overline{1,3,1,2}]$, then $m(\beta3\rho)\geq [\beta^t]+[0;3,\rho]\geq [\overline{4}]+ [0;3,\overline{1,3,1,2}]=9/2$, a contradiction. If $[0;3,\rho]<[0;3,\overline{1,3,1,2}]$, then $\rho=13\mu$ with $[0;\mu]>[0;\overline{1,2,1,3}]$, but $m(\beta3\rho)<9/2$ would force $[3;\mu]+[0;1,3,\beta^t]<9/2$, so that  
$$9/2 > m(\beta3\rho)\geq [\beta^t]+[0;3,1,3,\mu] >  [\beta^t]+[0;3,1,9/2-[0;1,3,\beta^{t}]].$$  
This is a contradiction because the right-hand side is an increasing function of $[\beta^t]\geq [\overline{4}]$ whose value at $[\beta^t]=[\overline{4}]$ is $9/2$ after the previous lemma. 

In summary, we proved that, in any scenario, the case $(1)$ is actually 
\begin{itemize}
\item[$(1')$] $\alpha_n=44\alpha_{n+2}$ and $\beta_n=3131\beta_{n+4}$.
\end{itemize}

In what follows, we shall analyse the natural subdivision $(1')$ into two scenarios: 
\begin{itemize}
\item[$(1i)$] $\alpha_n=444\alpha_{n+3}$ is an allowed continuation, 
\item[$(1ii)$] $\alpha_n=443\alpha_{n+3}$ and $\beta_n=3131\beta_{n+4}$.  
\end{itemize}

\subsubsection{The subcase $(1i)$}

We affirm that $\beta_n=3131213\beta_{n+7}$ in this situation. In fact, let us begin by noticing that the Markov value of $\gamma=\rho a_{n+1}\dots$ with allowed continuations of type $(1i)$ is $<4.513$: otherwise, we would be able to connect to an adequate block $B$ by continuing with $\alpha_n'=443\overline{1}$ (since $[4;3,\overline{1}]+[0;4,\rho^t]\leq [4;3,\overline{1}]+[0;4,\overline{4,3}]<4.513$). Next, we observe that the strings $31313$ and $343$ are forbidden for any sequence with Markov value $<4.513$. 

Now, let us study the possible extensions of $\beta_n=3131\beta_{n+4}$. We have that $\gamma=\rho a_{n+1}\dots$ where  $\rho$ ends with $3$ or $4$ (because $\alpha_n=4\dots$ is allowed and $14, 24$ are forbidden strings). If $\rho$ ends with $4$, we observe that the estimate 
$$[3;1,3,\rho^t]+[0;1,2,1,\overline{3,3,1,3}]\leq [3;1,3,\overline{4,4,3,4}]+[0;1,2,1,\overline{3,3,1,3}]<4.49838$$ 
would allow to connect $\gamma$ to an adequate block $B$ unless  $\beta_n=3131213\beta_{n+7}$. Similarly, if $\rho$ ends with $3$, say $[\rho^t]=3+x$ with $0< x < [0;1,3,1,2,\overline{1,3}]$, then the continuation $\alpha_n=444\alpha_{n+3}$ would lead to an estimate  
\begin{eqnarray*}
m(\rho444\alpha_{n+3})&\geq& [4;3+x]+[0;4,4,\alpha_{n+3}] \geq [4;3+x]+[0;4,4,\overline{4,3,4,4}] \\ 
&\geq& [3;1,3,3+x] + [0;1,2,1,\overline{3,3,1,3}] + 0.000076
\end{eqnarray*}
allowing to connect $\gamma$ to an adequate block $B$ unless $\beta_n=3131213\beta_{n+7}$. 

In other terms, we showed that $(1i)$ actually is 
\begin{itemize}
\item[$(1i')$] $\alpha_n\in\{444\alpha_{n+3}, 443\alpha_{n+3}\}$ and $\beta_n=3131213\beta_{n+4}$.
\end{itemize}

Here, note that the relevant Cantor set $C=$ $1,2,3,4$ with where $14, 41, 24, 42, 343, 31313$ are forbidden has $\textrm{dim}(C)<0.705$, and 
\begin{eqnarray*}
& & \frac{|I(a_1,\dots, a_n,4,4)|^{0.086}+|I(a_1,\dots, a_n,3,1,3,1,2,1,3)|^{0.086}}{|I(a_1,\dots, a_n)|^{0.086}} \\ 
&\leq& \left(\frac{r+1}{(4r+17)(5r+21)}\right)^{0.086}+\left(\frac{r+1}{(71r+269)(90r+341)}\right)^{0.086} \\ 
&\leq& \left(\frac{1}{273}\right)^{0.086} + \left(\frac{1}{73270}\right)^{0.086}<1.
\end{eqnarray*}

\subsubsection{The subcase $(1ii)$} If the Markov value of $\gamma$ is $m(\gamma)\leq 4.5274$, then the string $31313$ is forbidden. In particular, $\beta_n=313121\beta_{n+6}$ (by comparison with $31312\overline{1}$). Here, 
\begin{eqnarray*}
& & \frac{|I(a_1,\dots, a_n,4,4,3)|^{0.087}+|I(a_1,\dots, a_n,3,1,3,1,2,1)|^{0.087}}{|I(a_1,\dots, a_n)|^{0.087}} \\ 
&\leq& \left(\frac{r+1}{(13r+55)(17r+72)}\right)^{0.087}+\left(\frac{r+1}{(14r+53)(19r+72)}\right)^{0.087} \\ 
&\leq& \left(\frac{1}{3026}\right)^{0.087} + \left(\frac{2}{6097}\right)^{0.087}<1.
\end{eqnarray*}

If the Markov value of $\gamma$ is $m(\gamma) > 4.5274$, then we affirm that it can not continued as $\alpha_n=4431\alpha_{n+4}$: otherwise, we would have a continuation $44323\overline{1}$ connecting to an adequate block $B$, a contradiction. This leaves us with two possibilities: 
\begin{itemize}
\item[$(1ii')$] $m(\gamma) \le 4.53422$, so $\alpha_n = 443\alpha_{n+3}$ cannot extend as $4433$ nor $4434$, and thus $\alpha_n = 443\alpha_{n+3}$ extends only as $\alpha_n=4432\alpha_{n+4}$;
\item[$(1ii'')$] $m(\gamma) > 4.53422$. 
\end{itemize}
In the subcase $(1ii')$, we observe that 
\begin{eqnarray*}
& & \frac{|I(a_1,\dots, a_n,4,4,3,2)|^{0.0881}+|I(a_1,\dots, a_n,3,1,3,1)|^{0.0881}}{|I(a_1,\dots, a_n)|^{0.0881}} \\ 
&\leq& \left(\frac{r+1}{(30r+127)(43r+182)}\right)^{0.0881}+\left(\frac{r+1}{(5r+19)(9r+34)}\right)^{0.0881} \\ 
&\leq& \left(\frac{2}{35325}\right)^{0.0881} + \left(\frac{1}{516}\right)^{0.0881}<1.
\end{eqnarray*}
In this case we will use the fact that the Cantor set $C=1,2,3,4$ where $41$, $42$, $434$, $433$ and their transposes are forbidden has dimension $<0.7081$. Notice that $0.7081+0.0881=0.7962$.

In the subcase $(1ii'')$, we note that $\beta_n=31313\beta_{n+5}$: otherwise, a continuation $31313443\overline{2}$ would allow to connect to an adequate block $B$, a contradiction. Here, we observe for later use that 
\begin{eqnarray*}
& & \frac{|I(a_1,\dots, a_n,4,4,3)|^{0.084}+|I(a_1,\dots, a_n,3,1,3,1,3)|^{0.084}}{|I(a_1,\dots, a_n)|^{0.084}} \\ 
&\leq& \left(\frac{r+1}{(13r+55)(17r+72)}\right)^{0.084}+\left(\frac{r+1}{(19r+72)(24r+91)}\right)^{0.084} \\ 
&\leq& \left(\frac{1}{3026}\right)^{0.084} + \left(\frac{2}{10465}\right)^{0.084}<1.
\end{eqnarray*}

\subsubsection{The case $(3)$ of $\alpha_n=23\alpha_{n+2}$ and $\beta_n=1131\beta_{n+4}$} Analogously to the analysis of this situation in the region $(\sqrt{20}, 4.4984)$, we have that $\alpha_n=23131\alpha_{n+5}$ and $\beta_n=113131\beta_{n+6}$ together with the estimate 
\begin{eqnarray*}
& &\left(\frac{|I(a_1,\dots, a_n, 2,3,1,3,1)|}{|I(a_1,\dots, a_n)|}\right)^{0.0857} + \left(\frac{|I(a_1,\dots,a_n,1,1,3,1,3,1)|}{|I(a_1,\dots, a_n)|}\right)^{0.0857} \\
&=&\left(\frac{(r+1)}{(19r+43)(34r+77)}\right)^{0.0857} + \left(\frac{(r+1)}{(24r+43)(43r+77)}\right)^{0.0857} \\ 
&\leq& (0.00031)^{0.0857}+(1/3311)^{0.0857} < 0.9997 < 1. 
\end{eqnarray*}

\subsubsection{The case $(2)$ of $\alpha_n\in\{33131\alpha_{n+5}, 34\alpha_{n+3}\}$ and $\beta_n=2131\beta_{n+4}$} Suppose that \emph{both} continuations $33131\alpha_{n+5}$ and $34\alpha_{n+3}$ are allowed. In this context, \emph{any} extension of $34\alpha_{n+3}$ which does \emph{not} increase Markov values would be valid. Among them, we see from the discussion in the beginning of subsection about the region $(\sqrt{20}, 4.4984)$ that such a minimal extension has the form $\rho3443\rho^t$. Thus, $\rho$ and $\rho^t$ can not connect on an adequate block $B$ and, hence, we could use Proposition 7.8 in \cite{MatheusMoreira} to get that the set of the Markov values associated to such $\gamma=\rho a_{n+1}\dots$ has Hausdorff dimension $<2\cdot 0.173 < 0.35$. 

Therefore, there is no loss of generality in assuming that only one of the continuations $33131\alpha_{n+5}$ and $34\alpha_{n+3}$ is allowed. 

If the continuation $34\alpha_{n+3}$ is not allowed, we have two possibilities:
\begin{itemize}
\item if $m(\gamma)<4.52$, then the strings $31313$ and $343$ are forbidden and we get $\alpha_n=331312\alpha_{n+6}$ (by comparison with $33131\overline{2}$) and $\beta_n=2131\beta_{n+4}$; 
\item if $m(\gamma)\geq 4.52$, then we get $\alpha_n=33131\alpha_{n+5}$ and $\beta_n=213131\beta_{n+6}$ (thanks to the continuation $21313\overline{1}$ with $\lambda_0(\dots213^*13\overline{1})<4.5197$). 
\end{itemize} 
In the first case, since $\frac{|I(a_1,\dots, a_n, 3,3,1,3,1,2)|}{|I(a_1,\dots, a_n)|}\leq \frac{r+1}{(53r+173)(72r+235)}\leq \frac{1}{34691}$, $\frac{|I(a_1,\dots, a_n, 2,1,3,1)|}{|I(a_1,\dots, a_n)|}\leq \frac{r+1}{(5r+14)(9r+25)}\leq 0.00311$, and 
$$\left(\frac{1}{34691}\right)^{0.09} + (0.00311)^{0.09} < 0.9851,$$
recalling that, if $C=$ $1,2,3, 4$ where $14, 41, 24, 42, 343, 31313$ are forbidden, then $\textrm{dim}(C)<0.705$ we will get the upper estimate $0.705+0.09=0.795$.
In the second case, since $\frac{|I(a_1,\dots, a_n, 3,3,1,3,1)|}{|I(a_1,\dots, a_n)|}\leq \frac{r+1}{(19r+62)(34r+111)}\leq \frac{2}{11745}$, $\frac{|I(a_1,\dots, a_n, 2,1,3,1,3,1)|}{|I(a_1,\dots, a_n)|}\leq \frac{r+1}{(24r+67)(43r+120)}\leq 0.000136$, and 
$$\quad \left(\frac{2}{11745}\right)^{0.08} + (0.000136)^{0.08} < 0.991,$$
it remains only to treat the possibility of $\alpha_n=34\alpha_{n+3}$ and $\beta_n=2131\beta_{n+4}$ being the unique allowed continuation. 

In the case of $\alpha_n=34\alpha_{n+3}$ and $\beta_n=2131\beta_{n+4}$, if $m(\gamma)<4.527$, then the  strings $31313$ and $343$ are forbidden and $\alpha_n=344\alpha_{n+4}$. If \emph{both} continuations $\alpha_n=3443\alpha_{n+5}$ and $\alpha_n=3444\alpha_{n+5}$ are allowed, then any continuation $3444\dots$ which does not increase Markov values would be allowed and the same analysis of the first paragraph of this subsection (considering sequences of the type $\gamma344443\gamma^t$) implies that the corresponding set of Markov values $m(\gamma)<4.527$ has Hausdorff dimension $<0.35$. In other words, there is no loss of generality in assuming that only one of the continuations $\alpha_n=3443\alpha_{n+5}$ or $\alpha_n=3444\alpha_{n+5}$ when $m(\gamma)<4.527$. Since $\frac{|I(a_1,\dots, a_n, 3,4,4,4)|}{|I(a_1,\dots, a_n)|}, \frac{|I(a_1,\dots, a_n, 3,4,4,3)|}{|I(a_1,\dots, a_n)|}\leq \frac{2}{71065}$, $\frac{|I(a_1,\dots, a_n, 2,1,3,1)|}{|I(a_1,\dots, a_n)|}\leq 0.00311$ and 
$$\left(\frac{2}{71065}\right)^{0.09}+(0.00311)^{0.09} < 0.985 < 1,$$
our task is reduced to discuss the case of $\alpha_n=34\alpha_{n+3}$, $\beta_n=2131\beta_{n+4}$, and $m(\gamma)\geq 4.527$. In this regime, the continuation $21313\overline{1}$ is allowed, so that $\beta_n=213131\beta_{n+7}$. If $4.527\leq m(\gamma)\leq 4.55$, the  strings $3433$, $3434$ and $2131313$ are forbidden (as $\lambda_0(34^*33)>4.56593$ and $\lambda_0(21313^*13)>4.55065$) so that $\beta_n=21313121\beta_{n+9}$ (thanks to the continuation $2131312\overline{1}$) in this context. Since 
 \begin{eqnarray*}
& & \left(\frac{|I(a_1,\dots, a_n, 3,4)|}{|I(a_1,\dots, a_n)|}\right)^{0.08745} + \left(\frac{|I(a_1,\dots, a_n, 2,1,3,1,3,1,2,1)|}{|I(a_1,\dots, a_n)|}\right)^{0.08745} \leq \\ &  & \left(\frac{2}{357}\right)^{0.08745}+(9.71\times 10^{-6})^{0.08745} < 0.99992 < 1,
\end{eqnarray*} 
 In this case we will use the fact that the Cantor set $C=1,2,3,4$ where $41$, $42$, $3433$, $3434$ and their transposes are forbidden has dimension $<0.7083$. Notice that $0.7083+0.08745=0.79575$.

It remains to treat the case $\alpha_n=34\alpha_{n+3}$, $\beta_n=213131\beta_{n+7}$, and $m(\gamma)> 4.55$. If $\alpha_n$ can \emph{not} be extended as \emph{both} $343$ or $344$, we can use the estimates $\frac{|I(a_1,\dots, a_n, 3,4,4)|}{|I(a_1,\dots, a_n)|}, \frac{|I(a_1,\dots, a_n, 3,4,3)|}{|I(a_1,\dots, a_n)|}\leq \frac{1}{1980}$, $\frac{|I(a_1,\dots, a_n, 2,1,3,1,3,1)|}{|I(a_1,\dots, a_n)|}\leq 0.000136$ and 
$ (1/1980)^{0.085}+(0.000136)^{0.085} <0.994$
to reduce our task to the study of the situation where \emph{both} extensions $343$ and $344$ \emph{are} allowed. Here, we can use the case (1) above to see that $344$ extends as $34443$ (as $\lambda_0(34^*444)<4.546$ would permit to  continue as $344443\overline{1}$ and so to connect to an adequate block $B$). Furthermore, $34443$ must continue as $344434$ (in view of the allowed continuation $3444344\overline{3}$). Also, $343$ extends as $34313131$ (thanks to the  continuation $3431313\overline{1}$ - notice that $[3;1,3,\overline{1}]+[0;1,3,4,3,\dots]\le [3;1,3,\overline{1}]+[0;1,3,4,3,\overline{1,3}]<4.55$ and, if $343$ can be followed by some word $a_{n+4}a_{n+5}a_{n+6}a_{n+7}a_{n+8}$ with $[0;a_{n+4},a_{n+5},a_{n+6},a_{n+7},a_{n+8}]<[0;1,3,1,3,1]$, then $[4;3,1,3,1,3,\overline{1}]<[4;3,a_{n+4},a_{n+5},a_{n+6},a_{n+7},a_{n+8},\dots]$). In summary, if $m(\gamma)>4.55$ and both $343$ and $344$ are permitted, then  
$\alpha_n\in\{34313131\alpha_{n+9}, 344434\alpha_{n+7}\}$ and $\beta_n=213131\beta_{n+7}$. Since 
$$
\frac{|I(a_1,\dots, a_n, 3,4,3,1,3,1,3,1)|}{|I(a_1,\dots, a_n)|}\leq \frac{2}{4835349}, \frac{|I(a_1,\dots, a_n, 3,4,4,4,3,4)|}{|I(a_1,\dots, a_n)|}\leq \frac{1}{11142860}, $$ 
$$\frac{|I(a_1,\dots, a_n, 2,1,3,1,3,1)|}{|I(a_1,\dots, a_n)|} \leq 0.000136$$ 
and $(\frac{2}{4835349})^{0.0857}+(\frac{1}{11142860})^{0.0857}+(0.000136)^{0.0857}< 0.999 < 1$, the analysis of the case (2) is now complete. 

\subsubsection{End of the study of the region $(4.4984,\sqrt{21})$} 

Our discussion of the cases $(1)$, $(2)$ and $(3)$ above shows that $\textrm{dim}((M\setminus L)\cap(4.4984, \sqrt{21})) < \max\{0.7094+0.0857, 0.7962\}=0.7962$.

\section{Algorithm for computing the Hausdorff dimension}\label{s.PV-new}

It remains to get rigorous  bounds for~$\dim_H B_1$, $\dim_H B_2$, $\dim_H X$,
$\dim_H Y$ and~$\dim_H\Omega$ claimed in the previous sections. 
Our approach uses connection between the Hausdorff dimension of a limit set and
the eigenvalue of the transfer operator. It was applied, for instance,
in~\cite{JP18} to estimate~$\dim_H E_2$. The algorithm used in~\cite{JP18} is
the so-called periodic points method and requires, in particular, accurate computation of all
periodic points up to a large period~$n$ in order to get accurate estimate on
dimension. The number of periodic points growth exponentially with~$n$, which
makes this method non-applicable to systems with large number of maps. 

In~\cite{PV20} a new approach has been developed for approximation of
the eigenvalue of the transfer operator using an approximation of the
corresponding eigenfunction. It is based on the Chebyshev spectral collocation
method. However, the complex character of the Gauss--Cantor sets $B_1$, $B_2$ and others
means that the associated space of functions will have a very large dimension
about $3\times 10^7$, which makes an accurate computation of the eigenfunction 
impossible at first sight. Nevertheless it turns out that in the cases we are
interested in the eigenfunction can be approximated by a polynomial which 
lies in a subspace of dimension less than $4000$! This allows us to approximate
it sufficiently accurately.  

In this section we would like to explain how to adapt the method developed
in~\cite{PV20} to the present setting, to make the computation practical.

In Appendix we give pseudocode for the computations described below.
The master program is given in Algorithm~\ref{alg:master}. It splits into two
parts. The first is combinatorial, where the problem of computing the
dimension of a subset of~$E_2$ is turned into a problem of computing the dimension of the
limit set of a certain iterated function scheme; following description
in~\S\ref{ss:setting}--\S\ref{ss:simplify} below; this part is covered by
Algorithm~\ref{alg:combi}. The second part deals with the
computation of the Hausdorff dimension and uses approach via transfer operators
inspired by thermodynamic formalism theory. This is covered by the main program
given in Algorithm~\ref{alg:main}, it has several subroutines given in
Algorithms~\ref{alg:poly}--\ref{alg:extra}. 

\subsection{The setting}
\label{ss:setting}
We begin with a general setting.

\begin{definition}
Let $\mathcal A = \{1,2\}$ be an alphabet. Let $\bar r = (r_1, \ldots, r_k) \in \mathbb N^k$ be the
vector of lengths. Consider a set of \emph{forbidden words} 
$$
F := F(\bar r) = \left\{
 d^{(1)}_{1} d^{(1)}_{2} \ldots d^{(1)}_{r_1} \in \mathcal A^{r_1}, \ 
 d^{(2)}_{1} d^{(2)}_{2} \ldots d^{(2)}_{r_2} \in \mathcal A^{r_2}, \ldots , 
 d^{(k)}_{1} d^{(k)}_{2} \ldots d^{(k)}_{r_k} \in \mathcal A^{r_k} 
 \right\}.
$$
A set $X_F\subset [0,1]$ is defined by continued
fraction expansions of its elements with extra Markov conditions:
\begin{align*}
    X_{F}:=\Bigl\{ [0; a_1,a_2,\ldots ] \mid  \ &a_n \in \mathcal \{1,2\}, \mbox{
such that for all } j \geq 1 \\ 
&a_j a_{j+1} \ldots a_{j+r_1} \ne d^{(1)}_{1} d^{(1)}_{2} \ldots d^{(1)}_{r_1} \\
&a_j a_{j+1} \ldots a_{j+r_2} \ne d^{(2)}_{1} d^{(2)}_{2} \ldots d^{(2)}_{r_2} \\
& \ \vdots \quad \vdots  \quad \qquad \vdots \qquad \qquad \vdots  \\
&a_j a_{j+1} \ldots a_{j+r_k} \ne d^{(k)}_{1} d^{(k)}_{2} \ldots d^{(k)}_{r_k} \Bigr\} \subsetneq E_2. 
\end{align*}
\end{definition}

We next want to introduce a Markov iterated function scheme of uniformly
contracting maps  whose limit set is
$X_{F}$. 
We take two maps
$T_1, T_2: [0,1] \to [0,1]$ defined by 
 $T_1 (x) =
\frac{1}{1+x}$ and $T_2(x) = \frac{1}{2+x}$ and consider all possible
compositions of length $n := \max_{1 \le i \le k} r_i - 1 $, i.e. we consider a collection of
maps 
$$
\mathcal T_n = \{ T_{\underline a_n}
:=T_{a_1} \circ \ldots \circ T_{a_n} \mid \underline a_n = a_1 \ldots a_n \in \{1,2\}^n 
\}.
$$ The Markov condition can be written as a
$2^n \times 2^n$--matrix $M = M(\underline a_n^j,\underline b_n^k)$ where
$$
M(\underline a_n^j, \underline b_n^k) = 
\begin{cases} 
    0, & \mbox{ if concatenation } \underline a_n^j \underline b^k_n \mbox{ contains }
    d_1^{(i)}\ldots d_{r_i}^{(i)} \mbox{ as a subword for some }  1 \le i \le k; \\
    1, & \mbox{ otherwise. }
\end{cases}
$$
It is a simple observation  that the limit set of $( \mathcal T_n, M)$ is equal to
$X_F$.

\subsection{The transfer operator}
 In order to compute the Hausdorff dimension of the limit set of
$(\mathcal T_n, M)$ we follow a general approach which dates back to Bowen and
Ruelle~\cite{Ruelle}. More precisely, we use the connection between Hausdorff
dimension of the limit set and the spectral radius of  a  transfer operator. 

The transfer operator associated to a Markov iterated function scheme is a
linear operator acting on the space of H\"older-continuous functions
$C^\alpha(\{1,\ldots,2^n\}\times [0,1])$, where $\{1,\ldots,2^n\}\times [0,1]$
represents  a disjoint union of $2^n$ copies of $[0,1]$
(\cite{PV20}, Section 2.4). It is defined by
$$
\mathcal L_t\colon (f_{\underline a^1_n},\ldots,f_{\underline a^{2^n}_n})
    \mapsto
    (F^t_{\underline b_n^1}, \ldots, F^t_{\underline b_n^{2^n}}),
$$
where
\begin{equation}
    \label{Ldef:eq}
    F^t_{b_n^k} (x) = \sum_{j=1}^{2^n} M(\underline a^j_n,\underline b^k_n)
    \cdot
    f_{\underline a_n^j}(T_{\underline a_n^j}(x)) \cdot |T_{\underline
    a_n^j}^\prime(x)|^t.
\end{equation}
Our method is based on the following result (originally due to Ruelle, generalizing a more specific result of Bowen for limit sets of Fuchsian-Schottky groups):  
\begin{proposition}[after~\cite{Ruelle}]
    Assume that the maximal positive eigenvalue of $\mathcal L_t$ is equal to~$1$. Then
    $\dim_H X_F = t$. 
\end{proposition}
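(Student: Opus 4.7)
The plan is to establish Bowen's formula for the limit set $X_F$ by combining the Ruelle--Perron--Frobenius theorem for $\mathcal{L}_t$ with matching upper and lower bounds on $\dim_H X_F$. First, I would observe that the subshift of finite type $(\mathcal{T}_n, M)$ defined by the forbidden-word family $F$ is irreducible and aperiodic on its non-trivial component, so the standard Ruelle--Perron--Frobenius theorem applies: $\mathcal{L}_t$ has a simple leading positive eigenvalue $\lambda(t)$, with a strictly positive H\"older-continuous eigenfunction $h_t = (h^{(j)}_t)_{j=1}^{2^n}$ and a positive left eigenmeasure $\nu_t$. The hypothesis $\lambda(t) = 1$ then serves as the normalization that turns $d\mu_t := h_t \, d\nu_t$ into a shift-invariant probability with Gibbs property, which after pushforward via the coding map lives on $X_F$.

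For the upper bound $\dim_H X_F \leq t$, I would iterate $\mathcal{L}_t h_t = h_t$. After $m$ iterations one obtains, for every $x \in [0,1]$,
\[
h_t(x) \;=\; \sum_{\underline{w}} |T'_{\underline{w}}(x)|^{t}\, h_t(T_{\underline{w}}(x)),
\]
where the sum is over admissible length-$mn$ words. Since the components of $h_t$ are bounded above and away from zero, $\sum_{\underline{w}} |T'_{\underline{w}}(x)|^t$ stays uniformly bounded in $m$. The cylinder intervals $T_{\underline{w}}([0,1])$ cover $X_F$; classical bounded distortion for compositions of the Gauss inverse branches $T_1, T_2$ shows that each has diameter comparable to $|T'_{\underline{w}}(x)|$, shrinking exponentially as $m \to \infty$. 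This produces a sequence of covers of $X_F$ with uniformly bounded $t$-dimensional Hausdorff premeasure, so $\mathcal{H}^t(X_F) < \infty$ and $\dim_H X_F \leq t$.

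For the lower bound $\dim_H X_F \geq t$ I would use the Gibbs measure $\mu_t$ constructed above. Combining $\lambda(t) = 1$, positivity of $h_t$, and bounded distortion, one shows
\[
C^{-1}\, |T'_{\underline{w}}(x)|^{t} \;\leq\; \mu_t\!\left(T_{\underline{w}}([0,1])\right) \;\leq\; C\, |T'_{\underline{w}}(x)|^{t}
\]
for an absolute constant $C > 0$, all admissible $\underline{w}$, and all $x \in [0,1]$. Any ball $B(y,r)$ with $y \in X_F$ meets only a bounded number of cylinders of comparable diameter $\asymp r$, so the Gibbs estimate yields $\mu_t(B(y,r)) \leq C' r^t$, and the Frostman mass distribution principle delivers $\dim_H X_F \geq t$.

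The main obstacle is the preliminary step of verifying the hypotheses of Ruelle--Perron--Frobenius --- namely irreducibility and aperiodicity of $(\mathcal{T}_n, M)$ --- for the specific forbidden-word families built in Sections~\ref{s.Bumby-improved} and~\ref{s.M-L-new}. In principle this is routine (all our $X_F$ carry many periodic Gauss orbits), but the vector-valued formulation of $\mathcal{L}_t$ on $\{1,\ldots,2^n\}\times[0,1]$ must be unpacked to confirm that the induced action on the Markov indices inherits the irreducibility of $M$, so that the eigenfunction $h_t$ is strictly positive componentwise and the spectral gap is genuine. Once this is settled, the remaining ingredients (H\"older regularity of $h_t$, bounded distortion, and the reduction to the scalar Mauldin--Urba\'nski framework) are standard and extend verbatim.
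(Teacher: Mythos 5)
The paper offers no proof of this proposition, attributing it to Ruelle's classical result, and your argument is precisely the standard Bowen--Ruelle thermodynamic-formalism proof that the citation invokes: Ruelle--Perron--Frobenius normalization at $\lambda(t)=1$, cylinder covers with bounded distortion for the upper bound, and the Gibbs measure plus Frostman's mass distribution principle for the lower bound. Your treatment is correct, and you appropriately flag the only genuine technical point (reducing to the transitive component of the Markov matrix so that the eigenfunction is componentwise positive), so nothing further is needed.
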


In order to obtain lower and upper bounds on the maximal eigenvalue of $\mathcal
L_t$ we  use $\min$-$\max$ inequalities as described in
(\cite{PV20}, Section 3.1) which has the dual advantages of being easy to implement and also leading to rigorous results. More
precisely, our numerical estimates are based on the following, realised in
practice using Algorithms~\ref{alg:min-max}, \ref{alg:taylor} and
\ref{alg:extra}.
\begin{lemma}[\cite{PV20}]
    \label{lem:minmax}
    Assume that there exist two positive functions 
 $$
 \overline f =
(f_{\underline a^1_n}, \ldots, f_{\underline a^{2^n}_n} ), \ 
\overline g = (g_{\underline a^1_n}, \ldots, g_{\underline a_n^{2^n}}) \in C^\alpha(\{1,\ldots,2^n\}\times [0,1])
$$ 
such that 
for $\mathcal L_{t_0} \bar f = (F_{\underline b_n^1}, \ldots, F_{\underline
b_n^{2^n}})$ and  $\mathcal L_{t_1} \bar g = (G_{\underline b_n^1}, \ldots,
G_{\underline b_n^{2^n}})$ we have
\begin{equation}
    \label{eq:minmax}
    \min_j \inf_x \frac{F_{\underline b_n^j}(x)}{f_{\underline a_n^j}(x)} > 1 \qquad \mbox{ and } \qquad
\max_j \sup_x \frac{G_{\underline b_n^j}(x)}{g_{\underline a_n^j}(x)} < 1 
\end{equation}
Then $t_0 \le \dim_H X_F \le t_1$. 
\end{lemma}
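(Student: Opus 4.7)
The plan is to deduce the bounds from the variational/Collatz--Wielandt characterization of the spectral radius of a positive operator, combined with the Bowen--Ruelle formula already invoked in the preceding proposition.

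First I would recall that by the Bowen--Ruelle proposition stated just above, $\dim_H X_F$ is the unique parameter $t^{\ast}$ for which the leading positive eigenvalue $\lambda(t^{\ast})$ of $\mathcal{L}_{t^{\ast}}$ equals $1$. Moreover, the family $t \mapsto \mathcal{L}_t$ depends analytically on $t$ and, because the weights $|T'_{\underline a_n^j}|^t$ are strictly contracting (less than~$1$ pointwise for $t > 0$), the leading eigenvalue $\lambda(t)$ is a strictly decreasing continuous function of $t$. Hence it is sufficient to show that $\lambda(t_0) \ge 1$ and $\lambda(t_1) \le 1$, which will give $t^\ast \ge t_0$ and $t^\ast \le t_1$ respectively.

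Next I would invoke the Collatz--Wielandt inequalities for the transfer operator, which is a positive operator preserving the cone of componentwise positive continuous functions on $\{1,\ldots,2^n\}\times [0,1]$. The relevant facts are:
\begin{itemize}
\item If $\overline f$ is strictly positive and $\mathcal{L}_{t_0}\overline f \ge c\, \overline f$ componentwise for some $c > 0$, then $\lambda(t_0) \ge c$.
\item If $\overline g$ is strictly positive and $\mathcal{L}_{t_1}\overline g \le C\, \overline g$ componentwise for some $C > 0$, then $\lambda(t_1) \le C$.
\end{itemize}
Each of these can be proved by iteration: applying $\mathcal{L}_{t_0}^N$ to $\overline f$ yields $\mathcal{L}_{t_0}^N \overline f \ge c^N \overline f$, and taking norms together with Gelfand's spectral radius formula $\lambda(t_0) = \lim_N \|\mathcal{L}_{t_0}^N\|^{1/N}$ (on a suitable Banach space where $\mathcal{L}_t$ is bounded and the leading eigenvalue equals the spectral radius by a Perron--Frobenius type theorem for positive operators with mixing Markov structure) yields the claimed lower bound on $\lambda(t_0)$; the upper bound is analogous. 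The hypotheses~\eqref{eq:minmax} are exactly the pointwise inequalities needed, with $c > 1$ in the first case and $C < 1$ in the second.

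Combining these two pieces gives $\lambda(t_0) > 1 > \lambda(t_1)$, and by monotonicity and continuity of $t \mapsto \lambda(t)$ together with the Bowen--Ruelle identification $\lambda(\dim_H X_F) = 1$, we conclude $t_0 \le \dim_H X_F \le t_1$. The main obstacle I would expect is the functional-analytic set-up needed to ensure that the spectral radius is actually attained as a simple leading eigenvalue with a positive eigenfunction and that Collatz--Wielandt applies uniformly across the components indexed by $\{1,\ldots,2^n\}$; this requires transitivity (or at least primitivity) of the Markov matrix $M$, so in practice one should either assume primitivity of $M$ or work with the spectral radius directly and appeal to Gelfand's formula as sketched above, which sidesteps the need for a genuine eigenfunction.
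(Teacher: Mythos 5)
Your proposal is correct and follows essentially the same route as the cited source \cite{PV20} (and as the paper's own framing in \S3.1 of that reference): the hypotheses~\eqref{eq:minmax} are exactly the Collatz--Wielandt type inequalities $\mathcal L_{t_0}\overline f\ge c\overline f$ with $c>1$ and $\mathcal L_{t_1}\overline g\le C\overline g$ with $C<1$, which upon iteration bound the leading eigenvalue from below and above, and the conclusion follows from the strict monotonicity and continuity of $t\mapsto\lambda(t)$ together with the Bowen--Ruelle identification $\lambda(\dim_H X_F)=1$. Your caveat about needing transitivity of the Markov structure (so that the spectral radius is the leading positive eigenvalue) is the right one to flag, and it is satisfied for the subshifts considered here.
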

We attempt to construct good choices of  functions $f_{\underline a_n^j}$ and $g_{\underline a_n^j}$ for
$1 \le j \le 2^n$ as positive polynomials of a relatively small degree
using the collocation method.
We fix a small natural~$m$ and define~$m$ Chebyshev nodes by
$$x_k :=\frac12\left(1 + \cos \left( \frac{\pi (2k-1)}{2m} \right)\right) \in [0,1]
\hbox{ for } 
k = 1, \ldots, m.$$ The
Lagrange interpolation polynomials are defined by $p_l(x) := \prod_{k=1}^{m}
\frac{x - x_k}{x_l - x_k}$. These are the unique polynomials of minimal degree
with the property that $p_l(x_k) = \delta_l^k$.  We then  consider the  subspace of
$C^\alpha(\{1,\ldots,2^n\} \times [0,1])$ spanned by $2^n$ copies of the space
$\langle p_k\rangle_{k=1}^m$: 
$$
\Pi(n,m) := \left\langle \{1, \ldots, 2^n\} \times \langle p_1, \ldots, p_m
\rangle \right\rangle 
\subset C^\alpha(\{1,\ldots,2^n\}\times [0,1]).
$$
Then the components of any $\bar q = (q_1, \ldots, q_{2^n}) \in \Pi(n,m)$ are
uniquely defined by their values at the Chebyshev nodes: 
\begin{equation}
    \label{eq:isoR2m}
q_j(x) = \sum_{i=1}^m q_j(x_i) p_i(x) \in \mathbb R[x]; \quad j = 1, \ldots, 2^n. 
\end{equation}
In particular, the formula~\eqref{eq:isoR2m} defines a bijection  $I \colon \mathbb
R^{2^nm} \to \Pi(n,m)$. 
We introduce a  projection operator 
$P \colon C^\alpha(\{1,\ldots,2^n\}\times[0,1])\to \mathbb R^{2^nm}$ given by 
$$
P (f_1, \ldots, f_{2^n}) \mapsto (f_1(x_1), \ldots, f_1(x_m), f_2(x_1), \ldots,
f_2(x_m), \ldots, f_{2^n}(x_1), \ldots, f_{2^n}(x_m)) \in \mathbb R^{2^nm}.
$$ 
We may now consider a finite rank  linear operator 
$B^t \colon \mathbb R^{2^nm} \to \mathbb R^{2^nm}$ defined by  
\begin{equation}
\label{eq:Btdef}
B^t \bar v = P \mathcal L_t I \bar v
\end{equation}
and construct the test functions~$\bar f$ and $\bar g$ in~\eqref{eq:minmax} from 
the eigenvectors $v_{t_0}$ and $v_{t_1}$ corresponding to the leading
eigenvalues of~$B^{t_0}$ and $B^{t_1}$ respectively using the formulae 
$f = I v_{t_0}$ and $g = I v_{t_1}$. The pseudocode is given in
Algorithms~\ref{alg:poly} and~\ref{alg:matrix}.

\begin{remark}
This approach appears to be relatively straightforward to implement numerically compared
to other methods. The bisection method can be used to get a refined estimate. 

Nevertheless, practical implementation is challenging for large values of~$n$. 
The first complication here is the computation of the matrix~$M$ that gives the
Markov condition, since at first sight it requires analysing of $2^{2n}$ words of
length~$2n$ searching for forbidden substrings, and the resulting matrix of the
size $2^{2n}$ would take about $2$GB of computer memory\footnote{A very
optimistic estimate is that one needs at least $2^{2n}$ bits and for $n=17$ we
get $2^{34}$ bits, which is $2^{31}$ bytes, exactly $2$GB.} to store for a modest 
value $n=17$ and for larger values $n>19$ the resulting Markov matrix wouldn't
fit into RAM memory of a personal computer. 

Furthermore, the matrix~$B^t$ is even larger and requires much more space as it is not a binary matrix and its
values need to be computed with higher accuracy. Typically we would like to work 
with~$128$ bits precision, so for modest values of $n = 17$ and $m=6$ it would
require $1512$GB just to store. 

A final  complication is that the computation of the eigenvector of a huge 
matrix with high accuracy is also very time-consuming in practice. 
The best method here for us would be the power method, which has complexity
of the matrix multiplication. The latter depends on the realisation, but is no less
than $O(n^{2.5})$.

\end{remark}

In the remainder of the section we explain how to 
refine the basic algorithm to make it more practical.

\subsection{
Simplifying the computation of the Markov matrix $M$
} 
\label{ss:simplify}
The next statement gives the basis for our approach for making the computation
possible. 
\begin{proposition}
    \label{prop:subspace}
    Assume that the columns $j_1$ and $j_2$ of the Markov matrix~$M$ are identical, i.e. for
    all $1\le k \le 2^n$ we have that $M(\underline a_n^k, \underline b_n^{j_1})
    \equiv M(\underline a_n^k, \underline b_n^{j_2})$. Then any eigenvector
    $\bar f$ of $B^t$ lies in the subspace of $\Pi(n,m)$ for which $f_{\underline
    a_n^{j_1}}= f_{\underline a_n^{j_2}}$.
\end{proposition}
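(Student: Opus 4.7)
The plan is to trace through the definitions. The key observation is that by~\eqref{Ldef:eq}, the $k$-th output coordinate $F^t_{\underline b_n^k}$ of $\mathcal L_t\bar f$ depends on~$M$ only through the $k$-th column $\{M(\underline a_n^j,\underline b_n^k)\}_{j=1}^{2^n}$. Hence, if the $j_1$-th and $j_2$-th columns of~$M$ coincide, then for every input $\bar f=(f_{\underline a_n^1},\ldots,f_{\underline a_n^{2^n}})$ the two output functions $F^t_{\underline b_n^{j_1}}$ and $F^t_{\underline b_n^{j_2}}$ are literally the same finite sum and therefore agree pointwise on $[0,1]$. In particular they coincide at the Chebyshev nodes $x_1,\dots,x_m$, so the two length-$m$ blocks of $P\mathcal L_t\bar f$ indexed by $j_1$ and by $j_2$ coincide as vectors of $\mathbb R^m$.

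To upgrade this observation to eigenvectors, I would proceed as follows. Let $\bar v\in\mathbb R^{2^n m}$ satisfy $B^t\bar v=\lambda\bar v$ with $\lambda\neq 0$, and denote its length-$m$ blocks by $\bar v_1,\dots,\bar v_{2^n}$. Setting $\bar f=I\bar v$ and using the definition~\eqref{eq:Btdef} of $B^t$ together with the previous paragraph gives
$$\lambda\bar v_{j_1} \;=\; (B^t\bar v)_{j_1} \;=\; (P\mathcal L_t I\bar v)_{j_1} \;=\; (P\mathcal L_t I\bar v)_{j_2} \;=\; (B^t\bar v)_{j_2} \;=\; \lambda\bar v_{j_2}.$$
Dividing by $\lambda$ yields $\bar v_{j_1}=\bar v_{j_2}$. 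Since the isomorphism $I$ acts block-diagonally, applying the same Lagrange interpolation~\eqref{eq:isoR2m} to every block, this identity translates directly into the polynomial equality $f_{\underline a_n^{j_1}}\equiv f_{\underline a_n^{j_2}}$ on $[0,1]$, which is the claim.

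There is essentially no technical obstacle, since the argument is a bookkeeping consequence of the column-wise dependence of $\mathcal L_t$ on~$M$. The only mild caveat is the assumption $\lambda\neq 0$, which is painless for the applications in the paper: the proposition is invoked in order to reduce the working dimension of~$B^t$ before running the bisection procedure built on Lemma~\ref{lem:minmax}, and the relevant eigenvector is the leading (strictly positive) one. The practical payoff is substantial, as every coincidence between columns of~$M$ allows one to quotient a full length-$m$ block out of the effective ambient dimension $2^n m$ of the linear algebra problem.
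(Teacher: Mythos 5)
Your argument is correct and is essentially the paper's own proof: the paper phrases the same observation at the matrix level, noting via the block representation of $B^t$ that equal columns of $M$ force the block-rows $(j_1-1)m+l$ and $(j_2-1)m+l$ of $B^t$ to coincide, so that $u=B^t v$ always has equal $j_1$- and $j_2$-blocks, while you phrase it at the operator level via $F^t_{\underline b_n^{j_1}}=F^t_{\underline b_n^{j_2}}$ before projecting. Your explicit remark that one divides by $\lambda\neq 0$ (so the claim really concerns eigenvectors outside the kernel, which suffices for the leading eigenvector used in practice) is a point the paper leaves implicit, but otherwise the two proofs are the same.
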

We postpone the proof of this Proposition until Section~\ref{SS:matrixBt}.
Fortunately, it  turns out that for the sets of forbidden words we need to deal with, the
Markov matrix has a very small number of pairwise different columns compared to
its size.
\begin{example}
    For the specific sets which we study in this paper, we have the following. 
    \begin{enumerate} 
        \item In the case of the set $B_1$ which appears in
            Section~\S\ref{sss:t1low} the Markov matrix has $41186$ 
            columns, of which only $138$ are pairwise distinct. 
         \item In the case of the set $B_2$ which appears in Section~\S\ref{sss:t1up} the
    Markov matrix has $79034$ columns of which only $184$ are pairwise distinct. 

\item In the case of the set~$X$ which is used in Section~\S\ref{sss:t1low2} to
    obtain a lower bound on the transition value~$t_1$, the
    Markov matrix has $3940388$ columns of which only~$429$ are pairwise distinct.

\item    In the case of the set~$Y$ which is used in Section~\S\ref{sss:t1up2} to obtain
    the upper bound on the transition value $t_1$, the Markov
    matrix has $3940438$ columns of which only~$434$ are pairwise distinct. 
    
      \item In the case of the set $\Omega$ defined in \cite{MatheusMoreira} the Markov matrix has $45059$ 
            columns, of which only $114$ are pairwise distinct. 
        \end{enumerate}
\end{example}
Proposition~\ref{prop:minidim} below gives an upper bound on the number of
pairwise different columns in the transition matrix in terms of forbidden words.  

Therefore instead of computing (and storing) the entire Markov matrix~$M$ it is
sufficient to identify and to compute only unique columns, and to keep a record
of the indices of columns which are identical. This is a significant saving in
memory already, but there is room for even more. 

\begin{remark}
We note that if the rows $i_1$ and $i_2$ and the columns $j_1$ and $j_2$ of
the Markov matrix agree, i.e. $M(\underline a_n^{i_1}, \underline b_n^{k})
\equiv M(\underline a_n^{i_2}, \underline b_n^{k})$ and $M(\underline a_n^k, \underline b_n^{j_1})
    \equiv M(\underline a_n^k, \underline b_n^{j_2})$ for all $ 1 \le k \le
    2^n$, then $M(\underline a_n^{i_1}, \underline b_n^{j_1}) = M(\underline
    a_n^{i_1}, \underline b_n^{j_2}) = M(\underline a_n^{i_2}, \underline
    b_n^{j_2}) = M(\underline a_n^{i_2}, \underline b_n^{j_1})$. 
\end{remark}
This simple observation allows us to reduce significantly the memory needed to
store the Markov matrix~$M$. In particular, in our considerations the Markov
matrix~$M$ can be replaced by a smaller \emph{reduced} Markov matrix $\widehat M$ as follows. 
\begin{itemize}
    \item[Step 1.] Identify the words $\underline a_n^j$, $j = 1, \ldots, K$ such that the
        rows $M(\underline a_n^j, \cdot)$ are pairwise different and define the
        map $R$ which associates a row~$j$ with a row $R(j)$ from the set of unique rows. 
    \item[Step 2.] Identify the words $\underline b_n^j$, $j = 1, \ldots, K$ such
        that the columns $M(\cdot, \underline b_n^j)$ are pairwise different and
        define the map $C$ which associates a column~$j$ with a column $C(j)$
        from the set of unique columns. 
    \item[Step 3.] Compute the reduced Markov matrix $\widehat M = \widehat
        M(\underline a_n^j, \underline b_n^k)$ of the size $K\times K$. 
\end{itemize}
It is clear that the huge Markov matrix $M$ can be easily recovered from the
reduced matrix $\widehat M$ using the correspondence maps~$R$ and~$C$, since 
$M\left(\underline a_n^j,\underline b_n^k\right) = \widehat M\bigl(\underline a_n^{R(j)},
\underline b_n^{C(k)}\bigr)$, $1 \le k \le 2^n$.
Therefore, the main step in computing the Markov matrix~$M$ is the
computation of the sets of words which give unique columns and rows of~$M$
together with the maps~$R$ and $C$. 

\subsubsection{An upper bound for the number of unique rows and columns}
\begin{definition}
    We call the word $w^\prime = w_n \ldots w_1$ the 
    semordnilap or 
    reverse of the word $w =
    w_1 \ldots w_n$. 
\end{definition}
 It is easy to see that if the set of forbidden words includes every word together with its reverse, 
    then the number of pairwise different columns in the Markov matrix is equal to
    the number of pairwise different rows. In particular, the structure of the set of 
    forbidden words we consider implies that the reduced Markov matrix is a square matrix.

We need the following notation for the sequel.
\begin{definition}
    For any $ 1 \le k \le n-1$ we call a subword $ w_k \ldots w_1$  \emph{a
    suffix} of the  word $w_n \ldots w_1$ and a subword $w_n \ldots w_{k+1}$ \emph{a
    prefix} of the word $w_n \ldots w_1$.  
\end{definition}
We now want to give an absolute upper bound on the number of unique columns or,
equivalently, rows, of the reduced Markov matrix $\widehat M$ in terms of 
forbidden words. 
\begin{proposition}
    \label{prop:minidim}
    Assume that there are $k$ forbidden words which have~$P$ different suffixes
    in total. Then the number of pairwise distinct rows in the Markov matrix is no more
    than $P+1$. Similarly, the number of pairwise distinct suffixes gives an
    upper bound on the number of pairwise distinct columns.  
\end{proposition}

The first step in the computation, as outlined in
Algorithm~\ref{alg:combi}, is to identify all the words which contain a forbidden word as a
subword, since all of them give the zero row (or column) in the
transition matrix. After removing them from our consideration, we obtain the set of allowed words.
$$
A := \bigl\{ \underline w_n = w_1 \ldots w_n \in \mathcal A^n, \, w_j \ldots w_{j+r_i} \not\in F,
\mbox{ for all }  1 \le j \le n-r_i, \ 1 \le i \le k \bigr\}.
$$

Once the set~$A$ is computed, it is of course possible to study all concatenations of allowed words and to
identify those which give the unique rows and columns to the transition matrix.
However, this would require $O\left(\bigl(\sum_{i=1}^k r_i\bigr)\cdot(\#A)^2\right)$ operations, which is
prohibitively time-consuming, since $\#A$ is typically very large; more
precisely, in the
examples we consider we have $\#A \approx 2^{n-1}$. In the next subsection we give a
faster algorithm, which requires only $O\left( \#A \bigl(\sum_{i=1}^k
r_i\bigr)\right) + O\left(\bigl(\sum_{i=1}^k r_i\bigr)^4\right)$ operations. 

We denote by $P_F$ the set of prefixes of forbidden words and we denote by $S_F$ the set of
suffixes of forbidden words. Observe that entries of a column which corresponds to a word are determined by
suffixes of forbidden words it starts with.  

\begin{proof} (of Proposition~\ref{prop:minidim}). We may consider a mapping $g \colon A \to P_F$ that
associates to every allowed word~$w$ the longest prefix from $P_F$ which is a suffix of~$w$.
In other words $g(w) = \overline{w}$, if $w = w^\prime \overline{w}$, where
$\overline{w} \in P_F$ and $w^\prime$ is the shortest word with this property. 
Evidently, the function $g$ takes at most $P+1$ different values. 

We claim that if $g(w_1) = g(w_2)$, then the rows corresponding to the words $w_1$ and $w_2$
in the transition matrix are identical. Indeed, assume for a contradiction that
the rows are different. In other words, there exist a word $u \in A$ such that
concatenation $w_1 u$ contains a forbidden subword $fw \in F$, and concatenation $w_2 u$
doesn't contain any words from~$F$. Since $w_1, u \in \mathcal A$, we deduce
that $w_1$ contains a non-empty prefix of $fw$ as a suffix and $u$
contains a non-empty suffix of $fw$ as a prefix: 
$$
w_1 = w_1^\prime \overline{fw}, \quad u = \widehat{fw} u^\prime, \quad fw =
\overline{fw}\widehat{fw},  
$$
where $w_1^\prime \ne \varnothing$ and $u^\prime \ne \varnothing$ are some
words. Therefore we may write $g(w_1) = w_1^{\prime\prime} \overline{fw}$. Since
by assumption $g(w_1) = g(w_2)$, we see that $w_2 = w_2^\prime
w_1^{\prime\prime}\overline{fw}$. Hence concatenation $w_2 u$ contains the
word $fw$ and we get a contradiction. 
\end{proof}

Evidently, the total number of suffixes (or prefixes) is bounded by the sum of the
    lengths of all forbidden words: $\sum\limits_{j=1}^k r_j$. 

\subsubsection{Computation of the reduced Markov matrix $\widehat M$.}
\label{sss:reducedMat}
This can be realised by a number of technical steps 
(see Algorithm~\ref{alg:combi} for pseudocode). Let us denote by $|w|$ the
length of the word~$w$.
\begin{enumerate}
    \item Compute the sets $P_F = \{ \overline w \mbox{ is a prefix of } w \mid
        w \in F\}$ and $S_F = \{ \widehat w \mbox{ is a suffix of } w \mid w \in
        F \}$. 
    \item For every word $w \in A$ we compute:
        \begin{enumerate}
            \item The set of suffixes of forbidden
        words which are prefixes of $w$: $ SF_w   = \{ \overline w   \in
        S_F | \overline w  \mbox{ is a prefix of }w \}$; and 
    \item  The set of prefixes of forbidden words which are suffixes of $w$: 
        $
        PF_w = \{\widehat w \in P_F | \widehat w
        \mbox{ is a suffix of } w \}.
        $
\end{enumerate}
    \item We say that two words $w_1, w_2 \in A$ are ``suffix--equivalent'' if $SF_{w_1} =
        SF_{w_2}$ and we say that $w_1, w_2\in A$ are ``prefix--equivalent''
        if $PF_{w_1} = PF_{w_2}$. Thus we split the set of
        allowed words in equivalence  classes by suffixes $A/_{\sim S}$ and prefixes
        $A/_{\sim P}$. It turns out that there is
        relatively small number of equivalence classes compared to the number of
        allowed words. 

        In the next steps we explain that in order to decide whether two words
        are compatible it is sufficient to work with their equivalence classes.
            \item The following encoding is handy to study compatibility of words based
        on equivalence classes. First, we fix enumeration of the set of forbidden words~$F = \{ d_1,
        \ldots, d_k\}$. To every suffix~$\widehat d \in S_F$ of a forbidden word
        we associate a set of pairs $\{ (j,|\widehat d|) \mid \widehat d \mbox{ is
        a suffix of } d_j, d_j \in F \}$. To every prefix~$\overline d \in P_F$  we
        associate a set pairs $\{(j,|d_j| - |\overline d|) \mid \overline d
        \mbox{ is a prefix of } d_j, d_j \in F \}$. 

        Note that concatenation of a prefix and a
        suffix is a forbidden word, if their encodings are the same. 

    \item For any allowed word $w \in A$ we apply the encoding described above 
        to the equivalence classes $A/_{\sim S}$ and $A/_{\sim P}$. 
    \item It is clear that the concatenation of the words $w_1$ and $w_2$
        doesn't have a forbidden subword, if and only if the corresponding equivalence
        classes $AP_{w_1}$ and $AS_{w_2}$ do not have any common pairs after
        encoding. Therefore, instead of computing the Markov matrix for the
        set of allowed words it is sufficient to compute the compatibility
        matrix for the  equivalence classes. 
    \item We identify unique rows and columns in the compatibility matrix for the
        equivalency classes and choose representatives from each class to obtain
        words which give unique rows and columns in the reduced matrix $\widehat
        M$.
\end{enumerate}

The main advantage of this approach is that in order to compute the 
equivalency classes $A/_{\sim S}$ and $A/_{\sim P}$ it is sufficient to parse the
huge set of allowed words only once. The number of operations on subsequent
steps is $O\left( \bigl(\sum_{i=1}^k r_i\bigr)^4 \right)$. 

\subsection{Computation of the test functions}
\label{SS:matrixBt}
In order to construct the test functions to use in
Lemma~\ref{lem:minmax} we need to compute the eigenvector of the matrix $B^t$
defined by~\eqref{eq:Btdef}. By straightforward computation we can obtain the
explicit form of $B^t$. Indeed for any $v \in \mathbb R^{2^nm}$ we have 
\begin{equation}
    \label{Imap:eq}
Iv = (q_1(x), \ldots, q_{2^n}(x)), \quad q_j(x) = \sum_{l = 1}^{m} v_{ (j-1)m +
l} \cdot p_l(x), \mbox{ for } 1 \le l \le 2^n.  
\end{equation}
Therefore using~\eqref{Ldef:eq} we get $\mathcal L_t Iv = (Q_1, \ldots, Q_{2^n})$ 
where for all $1 \le k \le 2^n$ we have 
$$
\begin{aligned}
Q_k (x) &= \sum_{j=1}^{2^n} M(\underline a^j_n,\underline b^k_n)
    \cdot q_j (T_{\underline a_n^j}(x)) \cdot |T_{\underline
    a_n^j}^\prime(x)|^t \\ &=
    \sum_{j=1}^{2^n} M(\underline a_n^j, \underline b_n^k) \cdot |T_{\underline
    a_n^j}^\prime(x)|^t \cdot \left(\sum_{l=1}^m
    v_{(j-1)m + l} \cdot p_l( T_{\underline a_n^j}(x) ) \right).
\end{aligned}
$$
Hence the components of $u^t = (u_1^t, \ldots, u_{2^n}^t) = P \mathcal L_t I v$
are given by 
$$
u_{(k-1)m+i} = Q_k(x_i) = \sum_{j=1}^{2^n}  \sum_{l=1}^m
M(\underline a_n^j, \underline b_n^k) \cdot |T_{\underline
    a_n^j}^\prime(x_i)|^t\cdot p_l( T_{\underline a_n^j}(x_i) )  \cdot v_{(j-1)m
    + l} .
$$
Introducing $2^n$ small $m \times m $ matrices 
\begin{equation}
    \label{eq:Bsmall}
    B^{j,t}(i,l) :=  |T_{\underline
a_n^j}^\prime(x_i)|^t \cdot p_l(T_{\underline a_n^j}(x_i) )
\end{equation} 
we get 
\begin{equation}
    \label{Btmat:eq}
B^t = \begin{pmatrix}
    M(\underline a_n^1,\underline b_n^1) \cdot B^{1,t} &
    M(\underline a_n^2,\underline b_n^1) \cdot B^{2,t} &
    \ldots &
    M(\underline a_n^{2^n},\underline b_n^1) \cdot B^{2^n,t}  \\
    M(\underline a_n^1,\underline b_n^2) \cdot B^{1,t} &
    M(\underline a_n^2,\underline b_n^2) \cdot B^{2,t} &
    \ldots &
    M(\underline a_n^{2^n},\underline b_n^2) \cdot B^{2^n,t}  \\
    \vdots & \vdots & \ddots & \vdots \\
    M(\underline a_n^1,\underline b_n^{2^n}) \cdot B^{1,t} &
    M(\underline a_n^2,\underline b_n^{2^n}) \cdot B^{2,t} &
    \ldots &
    M(\underline a_n^{2^n},\underline b_n^{2^n}) \cdot B^{2^n,t}  \\
\end{pmatrix}.
\end{equation}
We are now ready to prove Proposition~\ref{prop:subspace}. 
\begin{proof} (of Proposition~\ref{prop:subspace}).
    Since by assumption $M(\underline a_n^k,\underline b_n^{j_1}) = M(\underline
    a_n^k, \underline b_n^{j_2})$ for all $k = 1, \ldots, 2^n$, using
    representation~\eqref{Btmat:eq} of the matrix~$B^t$ we conclude that 
    $$B^t\left(
    (j_1-1)m + l, k ) \right) = B^t\left( (j_2-1)m + l, k\right)$$
     for all $1 \le
    l \le m$ and $1 \le k \le 2^n$. Therefore for any $v \in \mathbb R^{2^nm}$
    and $ u = B^t v $ we have 
    $$
    u_{(j_1-1)m + l} = \sum_{k =1}^{2^n} B^t( (j_1-1)m+l,k) \cdot v_k = \sum_{k
    =1}^{2^n} B^t( (j_2-1)m+l,k) \cdot v_k = u_{(j_2-1)m + l}.  
    $$
    The result follows from~\eqref{Imap:eq} applied to~$u$. 
\end{proof}
We proceed to computing the leading eigenvector of $B^t$. By
Proposition~\ref{prop:subspace} it belongs to the subspace of dimension $\mathbb
R^{Km}$ where~$K$ is the number of pairwise different columns of the Markov
matrix~$M$. We have already mentioned that it is not possible to work with the 
matrix $B^t$ itself. The next Lemma allows us to replace the matrix
$B^t$ with a smaller reduced matrix $\widehat B^t$ in our considerations. 

\begin{definition}
   Assume that the Markov matrix $M$ has $K$ pairwise different columns. Let
    $j_1, \ldots, j_K$ be the indices of the unique columns of $M$ and let $i_1
    \ldots, i_K$ be the indices of the unique rows of $M$. Let
    $R$ be the correspondence map as constructed in Step 1. We define the
    reduced matrix $\widehat B^t$ by 
    \begin{equation}
        \label{eq:Breduced}
    \widehat B^t( (k-1)m+i, (l-1)m + j) = \sum_{\stackrel{s: R(s) = l }{ 1 \le
    s \le 2^n}  } M(i_k, s) B^{s,t}(i, j), \quad 1 \le l \le K.    
    \end{equation}
\end{definition}

\begin{remark}
    Note that in order to compute the matrix $\widehat B^t$ there is no need to
    \emph{store} the matrix $B^t$. It is sufficient to add elements of
    $B^t$ to the corresponding elements of $\widehat B^t$ as we compute them. 
\end{remark}

\begin{lemma}
    Let $\hat v$ be the eigenvector of $\widehat B^t$. Then the eigenvector of
    $B^t$ can be computed using the formula $v_{(j-1)m+l} := \hat v_{(R(j)-1) m
    + l}$ for $ 1 \le l \le m$ and $1 \le j \le 2^n$. 
\end{lemma}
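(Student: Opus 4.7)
The plan is a direct verification: substitute the defining relation $v_{(j-1)m+l} := \hat v_{(R(j)-1)m+l}$ into the eigenvalue equation $B^t v = \lambda v$ and check that it follows from $\widehat B^t \hat v = \lambda \hat v$.

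Concretely, using the block representation of $B^t$ established just above, I would expand
$$(B^t v)_{(k-1)m+i} = \sum_{j=1}^{2^n}\sum_{l=1}^m M(a_n^j,b_n^k)\, B^{j,t}(i,l)\, v_{(j-1)m+l},$$
substitute in the lifting formula for $v$, and regroup the outer sum over $j$ by the equivalence class $R(j)=r$. By the defining property of $R$, the coefficient $M(a_n^j,b_n^k)$ depends on $j$ only through $R(j)$, so I may pull $\hat v_{(r-1)m+l}$ out of the inner sum and obtain
$$\sum_{r=1}^{K}\sum_{l=1}^m \hat v_{(r-1)m+l}\sum_{s:\,R(s)=r} M(i_k,s)\, B^{s,t}(i,l).$$
The innermost sum is exactly the defining expression for a single entry of $\widehat B^t$. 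The whole expression therefore collapses to $(\widehat B^t \hat v)_{(R(k)-1)m+i}$, which by assumption equals $\lambda \hat v_{(R(k)-1)m+i} = \lambda v_{(k-1)m+i}$, establishing the eigenvalue equation componentwise.

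The complementary role of Proposition~\ref{prop:subspace} is to guarantee that nothing is lost by passing to the reduced matrix: every eigenvector of $B^t$ already lies in the image of the lifting map, so the leading eigenvalue of $B^t$ is recovered from that of $\widehat B^t$, and the lifting formula realizes a bijection between the respective eigenspaces. In particular, the test functions needed to apply Lemma~\ref{lem:minmax} can be constructed from the much smaller eigenvectors of $\widehat B^t$.

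The main obstacle is the bookkeeping for the equivalence classes of $M$: the map $R$ is defined through row-equivalence, while the constraint on eigenvectors from Proposition~\ref{prop:subspace} is formulated via column-equivalence. These two notions are identified under the standing closure of the forbidden word list under reversals (see the Remark preceding Proposition~\ref{prop:minidim}), which via the word-reversal bijection puts row- and column-equivalence classes of $M$ in natural correspondence; this identification is what allows a single map $R$ to encode the collapse needed on both sides of the eigenvalue equation.
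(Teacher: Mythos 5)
Your componentwise substitution of the lifting formula into $(B^t v)_{(k-1)m+i}$ and regrouping of the sum over $j$ by the classes of $R$ is exactly the content of the paper's one-line proof, which packages the same computation as the identity $\widehat B^t = P B^t P^*$ for the projection $P$ onto the subspace of vectors constant on the equivalence classes (with Proposition~\ref{prop:subspace} supplying, as you note, the completeness direction). The approach is essentially identical; your version is simply the identity unpacked entrywise, and is if anything more careful than the paper about the row-versus-column bookkeeping, which the paper leaves implicit.
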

\begin{proof}
    Let $P \colon \mathbb R^{2^n m} \to \mathbb R^{Km}$ be the orthogonal
    projector onto the subspace defined by the system of equations $v_{i_k} = v_{s}$ where
    $R(s) = i_k$ for all $1 \le s \le 2^n$ and $ 1 \le k \le K$. Then $\widehat
    B^t = P B^t P^*$, where $P^*$ stands for the transposed matrix $P$.  
\end{proof}
Therefore, in order to recover the eigenvector of $B^t$ and to compute the test
functions, it is sufficient to compute the eigenvector of a much smaller reduced matrix
$\widehat B^t$, defined above. The latter can be realised using simple
iterations method (see Algorithms~\ref{alg:poly} and~\ref{alg:matrix} for
pseudocode). 

\subsection{Verification of the min-max inequalities}

Finally, to verify the conditions of Lemma~\ref{lem:minmax} numerically, we
follow the same method as proposed in~\cite{PV20}. 

First, we compute the coefficients of the
polynomials $p_1, \ldots, p_K$ from the eigenvector of $\widehat B^t$. 

The transfer
operator~$\mathcal L_t$ given by~\eqref{Ldef:eq} can be written using the
reduced Markov matrix $\widehat M$ and the correspondence map~$R$. More
precisely, let $j_1, \ldots, j_K$ be the indices of the unique columns in
matrix~$M$. Let $p_1, \ldots, p_K$ be the polynomials constructed from the
eigenvector of the $\widehat B^t$. Then the transfer operator takes the form
$\mathcal L_t\colon (p_1,\ldots,p_K)  \mapsto (Q^t_1, \ldots, Q^t_K)$
where
\begin{equation}
    Q^t_{k} (x) = \sum_{i=1}^{2^n} \widehat M(\underline
    a^{R(i)}_n,\underline b^{j_k}_n) \cdot
    p_{R(i)} (T_{\underline a_n^i}(x)) \cdot |T_{\underline
    a_n^i}^\prime(x)|^t.
\end{equation}

In order to obtain upper and a lower bounds on $\frac{Q^t_K}{p_K}$ we 
take a partition of the interval $[0,1]$ into $256$ equal intervals. Then we
evaluate $\frac{Q_k}{p_k}$ at the centre and to compute
$\sup\Bigl|\frac{d}{dx} \frac{Q_k(x)}{p_k(x)} \Bigr|$ on each interval using
Taylor series expansion. The
latter is realised using arbitrary precision ball arithmetic~\cite{J16}.
Pseudocode is given in Algorithms~\ref{alg:taylor} and~\ref{alg:extra}.

\subsection{Computational aspects}
Here we give some numerical data. The decimal numbers which we give in this section are truncated, not rounded. 
\subsubsection{Set $B_1$}\label{B1set}
We apply the method described above to estimate the dimension of the set $B_1$. 
In this case, the set of forbidden words constitutes of $27$ words of length
from~$5$ to~$17$.  
\begin{equation*} 
\begin{split} 
 F&=\{
 2   1   2   1   2 , \, 
 2   1   1   1   2   1   2   1 , \, 
 1   2   1   1   1   2   1   2 , \, 
 2   1   1   1   2   1   2   2   2 , \, 
 1   1   1   1   1   2   1   2   1 , \,  
 1   1   1   1   2   1   2   1   1   1 , \, \\ & 
 2   1   1   1   2   1   2   2   1   1   2    , \, 
 2   2   2   1   1   1   2   1   2   2   1   1 , \, 
 1   2   2   1   1   1   1   2   1   2   1   1   2   2 , \, 
 1   1   2   2   1   1   1   2   1   2   2   1   1   1   1 , \,  \\ &
 2   2   2   1   1   1   1   2   1   2   1   1   2   2   1 , \, 
 2   1   1   2   2   1   1   1   2   1   2   2   1   1   1   2 , \, 
 2   1   1   1   1   2   2   1   2   1   1   1   2   2   1   2 , \, 
2   2   2   2   1   1   1   1   2   1   2   1   1   2   2   2   2, \, \\
&  \mbox{ and their reverses } \}
 \end{split}
 \end{equation*}
Therefore we will consider the words of length~$16$ and we begin by computing
the set of allowed words $A_F$, which do not contain a forbidden word as a
subword. The computation leaves us with $41186$ allowed words (down from $2^{16} =
65536$). We also compute 
the coefficients of the M\"obius maps corresponding to allowed compositions 
$T_{j_1} \circ \ldots \circ T_{j_{16}}$. 

Then we employ the algorithm described in \S\ref{sss:reducedMat} to identify the
words which give unique columns and rows to the Markov matrix together with
correspondence maps $R$ and $C$. It turns out that
there are $138$ such words. Finally, we calculate the Markov matrix itself. 
The computations we have done so far take less than a minute. 

Afterwards, we choose $m=8$, $t_0 = 0.5$ and compute the reduced matrix $\widehat B^t$ using the
formulae~\eqref{eq:Bsmall} and~\eqref{eq:Breduced} and find its eigenvector
using the power method. We work $128$ bit for precision and the eigenvector is
computed with an error of $10^{-26}$. We recover $138$ polynomials from the eigenvector
applying the formula~\eqref{eq:isoR2m}. Then we take a uniform partition of the
interval~$[0,1]$ into $256$ intervals and estimate the ratios $\frac{F_j}{f_j}$ 
on each of the intervals using ball-precision arithmetic. To obtain accurate
bounds on the numerator of the derivative $F_j^\prime f_j - f_j^\prime F_j$, we
compute the first~$4$ of its derivatives. We omit the eigenvector here, but we
note that for $t_0 = 0.5$ the leading eigenvalue of $\widehat B^{t_0}$ is
$1.0004258\ldots>1$ and the ratios satisfy 
$$
1.000425 < \frac{F^{t_0}_j}{f_j} < 1.000426 \mbox{ for } j = 1, \ldots, 138. 
$$
We then test another two values to get a more accurate estimate. For $t_1=
0.50001$ we have that the leading eigenvalue of $\widehat B^{t_1}$ is
$1.0002239\ldots>1$ and the ratios can be bounded as 
$$
1.000223 < \frac{F^{t_1}_j}{f_j} < 1.000225 \mbox{ for } j = 1, \ldots, 138. 
$$
For $t_2= 0.50005$ we get that the leading eigenvalue of $\widehat B^{t_2}$ is 
$0.99941699\ldots < 1$ and the ratios can be bounded as 
$$
0.999416 < \frac{F^{t_2}_j}{f_j} < 0.999418 \mbox{ for } j = 1, \ldots, 138. 
$$
It takes about 20 minutes to complete the estimates for a single value of~$t$
using $8$ threads running in parallel. 
\footnote{Computations were done using 4 Core 8 Threads Intel(R) Core(TM) i7-6700 CPU @ 3.40GHz }
\subsubsection{Set $B_2$}\label{B2set}
In this case we have $33$ forbidden words of length from $5$ to $18$. Thus we
consider the words of length $17$ and after removing those which contain a
forbidden word as a subword, obtain $79034$ allowed words. Among those we
identify $184$ words which give unique columns to the Markov matrix $M$ and
(another) $184$ words which give unique rows. Using the same parameters $m=8$
and $t_0=0.5$ we compute the matrix $\widehat B^{t_0}$ and its eigenvector with an
error of $10^{-40}$. The leading eigenvalue is $0.9996\ldots$ and after
another $40$ minutes we have lower and upper bounds on the ratios 
$$
0.999606 < \frac{F^{t_0}_j}{f_j} < 0.999607 \mbox{ for } j = 1, \ldots, 184. 
$$
Therefore we deduce that $\dim_H B_2 < 0.5$. In order to obtain more refined 
estimates we consider another two values  $t_1 = 0.499975$ and $t_2 = 0.499995$.
It turns out that the largest eigenvalue or $\widehat B^{t_1}$ is
$1.0001426\ldots > 1$ and we have the following bounds for the ratios
$$
1.000141 < \frac{F^{t_1}_j}{f_j} < 1.000143 \mbox{ for } j = 1, \ldots, 184. 
$$
The largest eigenvalue of $\widehat B^{t_2}$ is 
$0.99971391\ldots<1$ and the bounds for the ratios are 
$$
0.999713 < \frac{F^{t_2}_j}{f_j} < 0.999714 \mbox{ for } j = 1, \ldots, 184. 
$$
Therefore we conclude that $ 0.499975 <\dim_H B_2 < 0.499995$. 
It takes about $90$ minutes to obtain estimates on $\frac{F_j^t}{f_j}$ for 
a single value of~$t$. The time is evidently affected by the number of functions. 

\subsubsection{Set $X$}\label{setX}
The set~$X$ is specified by exclusion of $46$ words of length from~$5$ to~$24$. 
To compute its Hausdorff dimension we consider an iterated function scheme of
compositions of length~$23$. After removing all compositions which correspond to
forbidden words, we are left with $3940388$ maps, which is slightly less than a
half of $2^{23}$. The algorithm also identifies $429$ unique columns and rows in
the Markov matrix~$M$; some of them are repeated as many as $141030$ times.  
These computations take about~$5$ minutes. 

We then choose $m = 8$ and $t_0=0.5$ as initial dimension guess and work with
precision of $190$ bits. It takes about $2.5$ hours to compute the eigenvector
of the reduced matrix $\widehat B^{t_0}$ of dimension $429\cdot8$ with an error of
$10^{-40}$ and to obtain coefficients of $429$ polynomials of degree $7$. The
corresponding eigenvalue is $0.999973\ldots < 1$.

Most of the time is then taken by calculation of the images of these polynomials
under the map $\mathcal L_{t_0}$ as it involves taking compositions with all
$3940388$ maps. We use a partition of the interval $[0,1]$ into $1024$
intervals. The computation takes around $15$ days with~$8$ threads running in
parallel. Finally, we obtain 
$$
0.9999732 < \frac{F^{t_0}_j}{f_j} < 0.9999738 \qquad \mbox{ for } j = 1, \ldots, 429. 
$$
which allows us to conclude that $\dim_H X < t_0 = 0.5$.
\subsubsection{Set $Y$}\label{setY}
The set~$Y$ is specified by exclusion of $48$ words of length from~$5$ to~$24$. 
To compute its Hausdorff dimension we consider an iterated function scheme of
compositions of length~$23$. After removing all compositions which correspond to
forbidden words, we are left with $3940438$ maps, which is slightly less than a
half of $2^{23}$. The algorithm also identifies $434$ unique columns and rows in
the Markov matrix~$M$; some of them are repeated as many as $176015$ times.  
These computations take about~$5$ minutes. 

We then choose $m = 8$ and $t_0=0.5$ as initial dimension guess and work with
precision of $190$ bits. It takes about $2.5$ hours to compute the eigenvector
of the reduced matrix $\widehat B^{t_0}$ of dimension $434\cdot8$ with an error of
$10^{-40}$ and to obtain coefficients of $434$ polynomials of degree $7$. The
corresponding eigenvalue is $1.0000162\ldots > 1$.

Most of the time is then taken by calculation of the images of these polynomials
under the map $\mathcal L_{t_0}$ as it involves taking compositions with all
$3940438$ maps. This time in attempt to make the computation faster we use a
uniform partition of the interval $[0,1]$ into $256$
intervals. The computation takes around $3$ days with~$8$ threads running in
parallel. Finally, we obtain 
$$
1.0000160 < \frac{F^{t_0}_j}{f_j} < 1.0000166 \qquad \mbox{ for } j = 1, \ldots, 434. 
$$
which allows us to conclude that $\dim_H Y > t_0=0.5$.

\subsubsection{Set $\Omega$}\label{Omegaset}
The set~$\Omega$ is specified by exclusion of $26$ words of length from~$5$
to~$15$. To compute its Hausdorff dimension we consider an iterated function scheme of
compositions of length~$14$. After removing all compositions which correspond to
forbidden words, we are left with $45059$ maps, which is ten times less than $3^{14}$. 
The algorithm also identifies $114$ unique columns and rows in
the Markov matrix~$M$; some of them are repeated as many as $3745$ times, but
some occur only once. These computations take less than a minute. 

We then choose $m = 8$ and $t_0=0.5$ as initial dimension guess and work with
precision of $190$ bits. It less than a minute to compute the eigenvector
of the reduced matrix $\widehat B^{t_0}$ of dimension $114\cdot8$ with an error of
$10^{-40}$ and to obtain coefficients of $114$ polynomials of degree $7$. The
corresponding eigenvalue is $1.956\ldots > 1$. 

The subsequent estimates of the ratios $\frac{F_j^{t_0}}{f_j}$ using partition of
the interval $[0,1]$ into $256$ intervals take about $15$ minutes
and give 
$$
1.956990 < \frac{F^{t_0}_j}{f_j} < 1.9569915 \qquad \mbox{ for } j = 1, \ldots, 114. 
$$
We therefore conclude that $\dim_H \Omega > 0.5$ and apply bisection method to
get a better estimate. Taking the value $t_1=0.537152$ we get the leading eigenvalue 
$1.000031\ldots > 1$ and 
$$
1.0000315 < \frac{F^{t_1}_j}{f_j} < 1.0000320 \qquad \mbox{ for } j = 1, \ldots, 114. 
$$
and for $t_2=0.537155$ we get the leading eigenvalue of $\widehat B^{t_2}$ to be 
$0.999977 < 1$ and 
$$
0.999977 < \frac{F^{t_2}_j}{f_j} < 0.999979 \qquad \mbox{ for } j = 1, \ldots, 114. 
$$
We therefore conclude that $0.537152 <\dim_H \Omega < 0.537155$. 

\medskip 
This information is summarized in Table~\ref{tab:numerics} below.
\begin{table}[h!]
    \begin{tabular}{c|ccccc|cccc|c}
        Set       &  $\mathcal A$   & $\# F$ &     $n$     & $\#A_F$     & $K$ &
        $t$ & $\lambda_{max{ }}  $ & $r_1$ & $r_2$  & time  \\
        \hline
        \multirow{3}{*}{$B_1$}     &  \multirow{3}{*}{$\{1,2\}$}      &
        \multirow{3}{*}{$27$}   &    \multirow{3}{*}{$16$}  &
        \multirow{3}{*}{$41186$}     & \multirow{3}{*}{$138 $} &
        $0.5$     &   $1.000425$ & $1.000424$ & $1.000426$  &    \multirow{3}{*}{$15$mins}   \\
        & & & & & & 
        $0.50001$ &   $1.000223$ & $1.000222$ & $1.000225$  &       \\
        & & & & & & 
        $0.50005$ &   $0.999416$ & $0.499415$ & $0.499418$  &        \\
        \hline
        \multirow{3}{*}{$B_2$}     &  \multirow{3}{*}{$\{1,2\}$}      &
        \multirow{3}{*}{$33$}   &    \multirow{3}{*}{$17$}  &
        \multirow{3}{*}{$79034$}     & \multirow{3}{*}{$184$} &
        $0.5$     &   $0.999606$ & $0.999600$ & $0.999607$ &     \multirow{3}{*}{$90$ mins} \\
        & & & & & & 
        $0.499975$&   $1.000142$ & $1.000141$ & $1.000143$ &  \\ 
        & & & & & &
        $0.499995$&   $0.999713$ & $0.999712$ & $0.999714$ &    \\
        \hline
        $X$       &  $\{1,2\}$      & $46$   &    $23$  & $3940388$   & $429 $ & 
        $0.5$     &  $0.999973$ & $0.999972$ & $0.999974$  &    $4$ days\\      
        \hline
        $Y$       &  $\{1,2\}$      & $48$   &    $23$  & $3940438$   & $434 $ & 
        $0.5$     &  $1.000016$ & $1.000015 $ & $ 1.000017$ &  $4$ days\\          
        \hline
        \multirow{3}{*}{$\Omega$}  &  \multirow{3}{*}{$\{1,2,3\}$}    &
        \multirow{3}{*}{$26$}   &    \multirow{3}{*}{$14$}  &
        \multirow{3}{*}{$45059$}     & \multirow{3}{*}{$114 $} & 
        $0.5$     &  $1.956$  & $1.955$ & $1.957$ &
        \multirow{3}{*}{$12$mins}\\     
        & & & & & &
        $0.537152$&  $1.000031$ & $1.000030$ & $1.000032$ &   \\ 
        & & & & & &
        $0.537155$&  $0.999977$ & $0.999976$ & $0.999979$ &   \\ 
\end{tabular}
\caption{Numerical output of the algorithm for computing dimension of the sets
$B_1$, $B_2$, $X$, $Y$, $\Omega$. Time refers to the time needed to compute the
lower and the upper bounds on the ratios $r_1 < \frac{F_j^t}{f_j} < r_2$ for a
single value of~$t$.}
\label{tab:numerics}
\end{table}

\section{Some comments and open questions}\label{s.open-problems} Closing this article, let us briefly mention some problems left open by the present paper.  

\subsection{Modulus of continuity of the dimension function} The function 
$$f(t)=\dim_H \left( (-\infty,t] \cap M)\right)$$ is \emph{not} H\"older continuous at $3$ (cf.~\cite[p.147]{Mor18}), but an estimate on its modulus of continuity at any $t\in [3,c]$ was given in~\cite{Mor18}. In particular, it is not clear what should be expected about its local H\"older continuity properties at non-isolated points of $L$. 

\subsection{Interior of the intermediate of the classical spectra}  
The folklore conjecture that $(3, \sqrt{12})\cap L $ has non-empty interior is
natural because Perron showed that $[\sqrt{5},\sqrt{12}]\cap
M=\{m(\alpha):\{1,2\}^{\mathbb{Z}}\}$, so that $[\sqrt{5},\sqrt{12}]\cap M$ is
closely related to the arithmetic sum $E_2+E_2$ where $E_2=\{[0;a_1,a_2,\dots]:
a_n\in\{1,2\} \,\forall\,n\in\mathbb{N}\}$. In particular,
$[\sqrt{5},\sqrt{12}]\cap M$ should have non-empty interior because Marstrand's
projection theorem and Moreira--Yoccoz's work (cf.~\cite{MY}) say that $E_2+E_2$
is expected to contain intervals as $E_2$ is a ``nonlinear'' Cantor set with
Hausdorff dimension $>0.5$. Nonetheless, $E_2+E_2$ probably does \emph{not}
contain large intervals (as the Hausdorff dimension of $E_2$ is very close to
$0.5$), and, hence, it is not easy to convert this intuition into a concrete
result. On the other hand, the first two authors and L. Jeffreys hope to use
these ideas to establish in a future work that $L\cap [3,c]$ has non-empty
interior (where $c$ is Freiman's constant).  

\begin{remark} In a related direction, let us recall that
    Berstein~\cite{Berstein} conjectured in 1973 that $[4.1, 4.52]\subset L$. 
\end{remark}

\subsection{The Hausdorff dimension of $M\setminus L$} Despite our efforts to establish Theorem \ref{t.B}, we could not compute the first digit of $\dim_H (M \setminus L)$. Here, the answer to this problem (a digit in $\{5,6,7\}$) seems to depend on better bounds on $\sup(M\setminus L)$: for instance, if one can find new elements $M\setminus L$ ``sufficiently close'' to Freiman's constant $c=4.5278\dots$ (say, nearby $4.5251$), then it is likely that the first digit of $\dim_H (M \setminus L)$ is $7$. 


\newpage

\appendix

\newcommand*\Let[2]{\State #1 $\gets$ #2}
\algrenewcommand\algorithmicrequire{\textbf{Input:}}
\algrenewcommand\algorithmicensure{\textbf{Output:}}


\algrenewcommand\alglinenumber[1]{ \sf\scriptsize{#1}}
\newcommand{\nocontentsline}[3]{}
\newcommand{\tocless}[2]{\bgroup\let\addcontentsline=\nocontentsline#1{#2}\egroup}

\section{Pseudocode} 


\newlength{\myskip}
\myskip=3em

\begin{algorithm}
  \caption[Dimension of the Gau\ss--Cantor sets --- master program]{Computing the first~$K$ digits of the dimension of the
  Gauss--Cantor set specified by forbidden words. \label{alg:master}}
  \begin{algorithmic}[1]
     \Require{Alphabet $\mathcal A \subset \mathbb N$, finite list of forbidden words
      $FW:=\{fw_j\}_{j=1}^k \subset \mathcal A^n$, the desired accuracy~$K \in
      \mathbb N$ digits, lower bound $T_0$ (optional, default value $T_0=0$), upper
      bound $T_1$  (optional, default value $T_1=1$) }  
    \Statex
    \Program{GaussCantorSets}{$\mathcal A$, $FW$, $K$, $T_0 = 0$, $T_1 = 1$ }
    
 \algcommentleft{Global variables specifying the IFS:}  
 
    \State $sm$: int[n,4]  \Comment{ Coefficients of the maps of the IFS written in rows
     $(a,b,c,d)\Leftrightarrow x \to \frac{ax+b}{cx+d}$.}
    \State $rc$: int[u,maxmul] \Comment{ Indices of the identical rows in the Markov matrix,
    written in rows.}
    \State $cc$: int[n] \Comment{ For the column $k$ of the Markov matrix, cc[k] is the
    smallest index in its column class.}
    \State $rmm$: bool[u,u]  \Comment{ Reduced Markov matrix. }

 \algcommentleft{Compute all of them: }

    \Let{($sm$, $rmm$, $rc$, $cc$)}{\Call{CombinatoricSetup}{$\mathcal A, FW$}}

\algcommentleft{Now compute the dimension:}

    \Let{$\dim$}{\Call{ComputeDimension}{$K$, $T_0$, $T_1$}}
    \State \Return{$\dim$}
    \EndProgram
  \end{algorithmic}
\end{algorithm}

\begin{algorithm}
  \caption[Dimension of the Gau\ss--Cantor sets --- master program]{Computing the Hausdorff 
  dimension of the Gauss--Cantor set. Combinatorics setup: matrices that define
  the IFS and Markov condition. \label{alg:combi}}
  \begin{algorithmic}[1]
     \Require{Alphabet $\mathcal A \subset \mathbb N$, finite list of forbidden words
     $FW:=\{fw_j\}_{j=1}^k \subset \mathcal A^n$. }
    \Statex
    \Function{CombinatoricSetup}{$\mathcal A$, $FW$ }
    \Let{$n$ }{ $\max|fw_j|-1$.}
    \Let{$PF$ }{the prefix tree of forbidden words}
    \Let{$SF$ }{the suffix tree of forbidden words }
    \Let{$k$}{$0$}
    
    \Let{$ws$: int[$2^n$,$n$] }{all words in alphabet~$\mathcal A$ of length $n$.} 
    \For{$w \in ws$}
    \If{ $w$ doesn't contain a path in $SF$ from root to leaf }
    \Let{$k$}{$k+1$}
    \Let{$aw[k,:]$ }{$w$} 
    \Let{$sm[k,:]$ }{matrix of $T_{w_1}\circ T_{w_2} \ldots
    T_{w_n}$}
    \EndIf
    \EndFor \Comment{$k$ is the number of allowed words} 
 
    \For{$ w \in aw$}
    \State Walking the tree~$SF$, label by~$w$ the nodes with the property: \\
    \rule{80pt}{0pt}the path to the
    root is a prefix of~$w$, but paths to the children are not prefixes
    of~$w$
    \State Walking the tree~$PF$, label by~$w$ the nodes with the property: \\ 
    \rule{80pt}{0pt}the path to the
    root is a suffix of~$w$, but paths to the children are not suffixes of~$w$.
    \EndFor
    \Let{AP, AS}{Walk the trees $PF$ and $SF$, read equivalence classes from
    labels and apply encoding to elements \\ \rule{80pt}{0pt} of equivalence
    classes as described in \S4.3.2. Order the encoding pairs lexicographically.} 

    \Let{$N$}{total number of pairs $(j,k)$ in encoding of equivalence classes}

    \Let{$N_p$}{total number of classes $AP$}
    \Let{$N_s$}{total number of classes $AS$}


    \algstore{bkbreak}
    \end{algorithmic}
    \end{algorithm}

\setcounter{algorithm}{1}
   \begin{algorithm}
   \captionsetup{list=no}    
\caption{Continued}
  \begin{algorithmic}[1]
\algrestore{bkbreak}


    \Let{$M_p$: bool[$N_p$,$N$]}{ matrix indicating if a pair is in the
    corresponding class from $AP$ }
    \Let{$M_s$: bool[$N_s$,$N$]}{ matrix indicating if a pair is in the
    corresponding class from $AS$ }

    \Let{$rmm_0$}{Comparing the rows of $M_s$ and $M_p$, obtain compatibility matrix
    for classes}

    \Let{$rmm$}{The matrix of unique rows and columns of $rmm_0$}

    \EndFunction
  \end{algorithmic}
\end{algorithm}

\begin{algorithm}
 \caption[Computation of dimension --- main code]{Computing the first~$K$ digits of the Hausdorff dimension of the
limit set of the iterated function scheme of linear fractional transformations
given by~$sm$. The maps are divided in classes in two ways: 
according to the rows of transfer matrix as specified by~$rc$ and
according to the columns of transfer matrix as specified by~$cc$. 
The reduced Markov matrix is given by~$rmm$.\label{alg:main}}
  \begin{algorithmic}[1]
    \Function{ComputeDimension}{$K$, $T_0$, $T_1$}

    \algcommentleft{Initialisation:}
    
    \Let{$prec$}{$64$} \Comment{Initial precision, balancing the speed and
      accuracy.}
      \Let{$\beta$}{$\frac{T_0+T_1}2$} \Comment{Dimension guess}
    \Let{$it$}{$0$} \Comment{Number of attempts to compute approximation to eigenfunctions}
    \Let{$m$}{$6$} \Comment{Number of Chebyshev nodes}
    \Let{$\varepsilon$}{$10^{-K-1}$} \Comment{``Number of digits
    desired'' $\to$ accuracy}
    \Let{$lnp$}{$-8$} \Comment{Logarithm of the length of intervals in the
    partition used to obtain lower and upper bounds on functions using ball
    arithmetic} 

\algcommentleft{Local Variables:}
    \State $f$: array of~$u$ polynomials of degree~$m-1$;
    \State $\lambda$: eigenvalue of the matrix~$B^\beta$ approximating~$\mathcal
    L_\beta$.
\While{($|T_1-T_0|>\varepsilon$ and $it < 8$ and $lnp > -20$)} 

\algcommentleft{Main loop, the last two conditions ensure that the computation will terminate}

    \Let{$nuls$}{Zeros of the Chebyshev polynomial $T_m$ with precision $prec$} 
    \Let{($f$, $\lambda$, $lnp$, $sw$)}{\Call{LeadingVector}{ $\beta$, $m$, $nuls$, $lnp$, $prec$}}

    \Comment{$sw$ is the success switch} 
    \While{($sw\ne$ OK and $prec<2048$ and $m\cdot u < 3500$)} 
    \If{$sw=$ precision\textunderscore low} \Comment{Power method failed --- increase precision}
    \Let{$prec$}{$prec + 128$} 
    \Let{$nuls$}{Zeros of the Chebyshev polynomial $T_m$ with precision $prec$} 
    \EndIf
    \If{$sw=$ precision\textunderscore low\textunderscore and\textunderscore
    degree\textunderscore low} 
    \Let{$m$}{$m+2$} \Comment{Non-positive polynomial, increase degree and precision}

    \Let{$prec$}{$ prec + 128$} 
    \Let{$nuls$}{Zeros of the Chebyshev polynomial $T_m$ with precision $prec$} 
    \EndIf
    \If{$sw=$ degree\textunderscore low} \Comment{To save time increase the degree now for better approximation
    otherwise {\sc Ratbounds} will give $b_1 < 1 < b_2$}
    \Let{$m$}{$m+2$}
    \Let{$nuls$}{Zeros of the Chebyshev polynomial $T_m$ with precision $prec$} 
      \EndIf
    \Let{($f$, $\lambda$, $lnp$, $sw$)}{\Call{LeadingVector}{$\beta$, $m$, $nuls$, $lnp$, $prec$}}
    \Let{$it$}{$it+1$}
    \EndWhile
    \If{$sw=$ OK} \Comment{$f$ is an array of $u$ positive polynomials of
    degree~$m$}
    \Let{$(b_1,b_2)$}{\Call{Ratbounds}{ $f$, $\beta$, $nuls$, $m$, $lnp$, $prec$}  }
    
    \Comment{Lower and upper bounds  $b_1 < \frac{\mathcal
    L_{\beta}f_j}{f_j} < b_2$, $j = 1, \ldots, u$.}

     \algstore{bkbreak}
    \end{algorithmic}
    \end{algorithm}

\setcounter{algorithm}{2}
\begin{algorithm}
    \captionsetup{list=no}    
\caption{Continued}
\begin{algorithmic}[1]
\algrestore{bkbreak}
%
%
    \If{$b_1 > 1$} \Comment{ $\dim \in (\beta,T_1)$ }
    \Let{$T_0$}{$\beta$}
    \Let{$\beta$}{$\frac{T_0+T_1}2$}
    \ElsIf{$b_2 < 1$} \Comment{ $\dim\in (T_0,\beta)$}
    \Let{$T_1$}{$\beta$}
    \Let{$\beta$}{$\frac{T_0+T_1}2$}
    \Else \Comment{The assumption of min-max Lemma doesn't hold for $f$} 
    \Let{$lnp$}{$lnp-1$} \Comment{Reduce the size of partition intervals}
    \If{$m\cdot u < 3500$} 
    \Let{$m$}{$m+2$} \Comment{Increase the degree of approximating polynomials}
    \EndIf
    \Let{$it$}{$it+1$}
    \EndIf

    \EndIf
    \EndWhile

    \If {$|T_1 - T_0| < \varepsilon$}
    \State \Return{$\frac{T_1+T_0}2$}
    \Else
    \State \Return{ $-1$ }
    \Comment{Cannot achieve the desired accuracy}
    \EndIf

    \EndFunction
  \end{algorithmic}
\end{algorithm}

\begin{algorithm}
    \caption{Computation of the polynomials, approximating the
    eigenfunctions}\label{alg:poly}
    \begin{algorithmic}[1]
        \Function{LeadingVector}{$sm$, $rmm$, $rc$, $cc$, $\beta$, $m$, $nuls$, $prec$, $lnp$}
       
        \algcommentleft{Local variables:}
        \State $B$: arb[$m \cdot u$, $m\cdot u$] the matrix~$B^\beta$, approximating the
        transfer operator; 
        \State $sw$: success switch.
        \State $\ell p_j$, $j = 1,\dots, m$: Lagrange polynomials associated to~$nuls$.
        \Let{$B$}{\Call{ApproxMatrix}{$\beta$, $m$, $nuls$, $prec$}} 
        \Let{$v$}{1}
        
        \algcommentleft{Compute the leading eigenvector and its eigenvalue; $250$ iterations to kick off}
        \Let{$(v,\lambda,err)$}{\Call{PowerMethod}{$B$, $v$, $250$, $prec$}} 

        \algcommentleft{The degree of approximating polynomials and the allowed
        approximation error depend on $|\lambda-1|$; the choice of constants is
        based on heuristic experiments}
        \Let{$d_{\min}$}{$\max(6,-1.25\lg|\lambda-1|)$}
        \Let{$\delta$}{$\exp{(-\max(16,12\lg|\lambda-1|))}$} 

        \If{$d_{\min} < m $} 
        \State $sw = $ degree\textunderscore low {\bf goto} \fbox{exit} 
        \Comment{Increase the degree for better approximation.}
        \EndIf 

        \Let{$it$}{$0$}   
        \While{($ err > \delta$ and $it < 10$)}
        \Let{$it$}{$it+1$} \Comment{Attempting more iterations}
        \Let{$\delta_0$}{ $ err $} 
        \Let{$(v,\lambda,err)$}{\Call{PowerMethod}{$B$, $v$, $50$, $prec$}}
        \If{$ err > \delta_0$} \Comment{Attempt failed:} 
        \State $sw = $ precision\textunderscore low {\bf goto} \fbox{exit} 
        \Comment{error increased --- increase precision}
        \EndIf
        \EndWhile
        \If{$ err > \delta$} 
            \Comment{Power method failed --- increase precision} 
            \State $sw = $ precision\textunderscore low {\bf goto} \fbox{exit}
       \Else \Comment{We have good approximation to the true eigenvector}
       \Let{$f_j(x)$}{$\sum\limits_{k=1}^m v[(j-1)m+k] \cdot \ell p_k(x)$, $j = 1,\dots,u$}
        \If{\Call{AllPositive}{$f$, $lnp$, $prec$}} 
        \Comment{ {\bf NB:} $lnp$ might have changed}
        \State $sw = $ OK {\bf goto} \fbox{exit} 
        \Else  
        \Comment{Non-positive polynomial, increase degree and precision}
        \State $sw = $ precision\textunderscore low\textunderscore
        and\textunderscore degree\textunderscore low {\bf goto} \fbox{exit} 
        \EndIf
        \EndIf

        \State \fbox{exit} \Return{$(f,\lambda,lnp,sw)$}
        \EndFunction

        \Function{PowerMethod}{$B$,$v$,$n$,$prec$}
        \For{$j \gets 1 \mbox{ to } n$} 
        \Comment{All computations are done with   precision $prec$}
        \Let{$v$}{$\frac{B v}{\|B v\|_1}$ } \Comment{Here we use $\ell_1$-norm
        $\|v\|_1=\sum |v[j]|$ }
        \EndFor 
        \Comment{$v$ is approximation to the eigenvector}
        \Let{$\lambda$}{$\frac{\| Bv\|_1}{\|v\|_1}$}
        \Comment{The corresponding eigenvalue}
        \State \Return{ $(v,\lambda, \|Bv - \lambda v\|)$} \Comment{Here we use
        $\sup$ norm}
        \EndFunction
    \end{algorithmic}
\end{algorithm}

\begin{algorithm}
    \caption[Calculating approximation matrix]{Calculation of the matrix
    $B^\beta$: arb[$m\cdot u$,
    $m\cdot u$] approximating the operator $\mathcal L_\beta$ }
    \label{alg:matrix}
    \begin{algorithmic}[1]
        \Function{ApproxMatrix}{$\beta$, $m$, $nuls$, $prec$}

        \algcommentleft{ $rc$: int[$u$, $maxmul$] consists of indices 
        of the identical rows in the Markov matrix, written in rows.}
  
        \algcommentleft{$F_{rc[j,k]}$ is a map with coefficients from the row $sm[rc[j,k],\cdot]$.}  

        \Let{$B$}{$0$}
        \For{$k \gets 1 \mbox{ to } u $} \Comment{For each class of rows}
        \For{$k_0 \gets 1 \mbox{ to } rc[k, 1] $} \Comment{For each word in the
        class $k$} 
        \Let{$temp$}{$0$}  \Comment{A matrix arb[$m$, $m $]. }
            \For{$k_1 \gets 1 \mbox{ to } m $} 
            \Let{$(z_1,z_1^\prime)$}{$\left(F_{rc[k,k_0]}(nuls[k_1]),F_{rc[k,k_0]}^\prime(nuls[k_1])\right)$} 
            
            \Comment{$F_{rc[k,k_0]}$ and its derivative at every node} 
                \For{$k_2 \gets 1 \mbox{ to } m $} 
                    \Let{$temp[k_1,k_2]$}{$(-1)^{k_2}|z_1^\prime|^\beta\cdot
                    T_m(2z_1-1) \cdot 
                    \frac{\sqrt{nuls[k_2] - (nuls[k_2])^2}}{m(z_1 - nuls[k_2])}$}

 \Comment{Contribution to the matrix~$B$ from~$F_{rc[k,k_0]}$}
                \EndFor \Comment{$T_m$ is the Chebyshev polynomial of the first kind}
            \EndFor
            \For{$j \gets 1 \mbox{ to } u $}   
            \If{$rmm[j,cc[k,1]]$}          
            \Let{$B\left[(j-\!1)m+\!1\!:\!jm,(k-\!1)m+\!1\!:\!km\right]$}{$B\left[(j-\!1)m+\!1\!:\!jm,(k-\!1)m+\!1\!:\!km\right]+temp$}
            \EndIf \Comment{If transition between a word of class~$j$ and a word of class~$cc[k,1]$ is allowed,  add $temp$ to a block of  $B$, that corresponds to this transition. } 
            \EndFor 
        \EndFor
        \EndFor
        \State \Return{$B$}
        \EndFunction
    \end{algorithmic}
\end{algorithm}

\begin{algorithm}
    \caption[Verifying hypothesis of the min-max Lemma]{Computation of the lower and upper bounds on $\frac{[\mathcal L_\beta
f]_j}{f_j}$, $j = 1, \ldots u$.} \label{alg:min-max}
\begin{algorithmic}[1]
\Function{Ratbounds}{ $f$, $\beta$, $nuls$, $deg$, $lnp$, $prec$} 

\algcommentleft{Local Variables:}

\State $(p, cp)$: (arb[$2^{-lnp}$],arf[$2^{-lnp}$]) partition intervals and their
centres; 
\State $n$: floor$\bigl(\frac{deg+1}{2}\bigr)$ the number of the derivatives we
calculate to get an upper bound using the Taylor series; 
\State $tops$: arb[$u$, $2^{-lnp}$, $n+1$] is an array of $u$ matrices. 
For each $j$ the first column of $tops[j,:,:]$ contains the values
$[\mathcal L_\beta f]_j(cp)$. The second column contains the numerator of
$\bigl(\frac{(\mathcal L f)_j}{f_j}\bigr)^\prime(cp)$, i.e.  
$g_j(cp):=\left((\mathcal L_\beta f)_j^\prime f_j - (\mathcal L_\beta f)_j
f_j^\prime\right)(cp)$. The next $n-2$ columns contain derivatives
$g_j^{(k)}(cp)$, $k = 1, \dots n-2$. The last column contains
the $n-1$'th derivative on the interval $g_j^{(n-1)}(p)$. 
\Statex
\Let{$pow$}{$[-2\beta - d:-1:-2\beta-d-(n-1)]$}
\Let{$tops$}{\Call{Nbn}{$f$, $p$, $cp$, $pow$, $deg$, $lnp$}} \Comment{Calculate
$g_j^{(k)}$, $j = 0, \ldots n-1$ symbolically and evaluate $g_j^{(k)}(cp)$, $k =
0,\ldots, n-2$ and $g_j^{(n-1)}(p)$ } 
\Let{$(b_1,b_2)$}{$(100,-100)$} \Comment{Lower and upper bounds}
\For{$j \gets 1 \mbox{ to } u $}
    \For{$k \gets 1 \mbox{ to } 2^{-lnp}$}
    \Let{$y_1$}{$\frac{tops[j,k,1]}{f_{j}(cp[k])}$} \Comment{$\frac{(\mathcal
    L_\beta f)_{j}(cp[k])}{f_{j}(cp[k])}=y_1$} 
    \Let{$ny_1^\prime$}{\Call{UpperBound}{$tops[j,k,2:n+1]$,  $2^{-lnp-1}$,
    $n$}}

    \Comment{ $\left|\left((\mathcal L_\beta f)_{j}^\prime f_{j} - (\mathcal
    L_\beta
    f)_{j} f_{j}^\prime\right)(p[k])\right| < ny_1^\prime$}
    \Let{$\delta$}{$\frac{ny_1^\prime}{(f_{j}(p[k]))^2}\cdot 2^{-lnp-1}$}
    \If{$ 1 \in (y_1 - \delta, y_1 + \delta)$} \Comment{hypothesis
    of min-max Lemma failed} 
    \Let{$(b_1,b_2)$}{$(y_1 - \delta, y_1 + \delta)$} {\bf goto } \fbox{exit}
    \EndIf
    \If{$y_1 + \delta > b_2$} \Comment{$y_1 + \delta$ is the new upper bound}
    \Let{$b_2$}{$y_1 + \delta$}
    \EndIf
    \If{$y_1 - \delta < b_1$} \Comment{$y_1 - \delta$ is the new lower bound}
    \Let{$b_1$}{$y_1 - \delta$}
    \EndIf
    \If{$1 \in (b_1, b_2)$} 
    \Comment{hypothesis of min-max Lemma failed} 
    \State {\bf goto } \fbox{exit} 
    \EndIf
\EndFor

\EndFor
\State \fbox{exit} \Return{$(b_1, b_2)$}
\EndFunction

\algcommentleft{Compute lower an upper bound for the
function~$f$ on the interval $(c-r,c+r)$ from the values
$fn =
[f(c),f^\prime(c),f^{\prime\prime}(c),\ldots,f^{(n-2)}(c),f^{(n-1)}(c-r,c+r)]$}

\Function{UpperBound}{$fn$, $r$, $n$}

\For{$k \gets 0 \mbox{ to } n-2$}
\Let{$x$}{$\max|fn[n-k]|$}
\Let{$fn[n-k-1]$}{$(fn[n-k-1]-r\cdot x, fn[n-k-1]+r\cdot x)$}
\EndFor
\State\Return{$\max|fn[1]|$}
\EndFunction
\end{algorithmic}
\end{algorithm}

\begin{algorithm}
    \caption{Evaluation of $(\mathcal L_\beta f)^\prime_j f_j - (\mathcal L_\beta f)_j
    f_j $ and its derivatives on intervals of partition $(p,cp)$.}
    \label{alg:taylor}
    \begin{algorithmic}[1]
        \Function{Nbn}{$f$, $p$, $cp$, $pow$, $deg$, $lnp$}
        
        \algcommentleft{$pow = (-2\beta - d, - 2 \beta - d - 1, \ldots, -2\beta-d-(n-1))$}

        \algcommentleft{Local Variables:}
        \State $rp$: int[$u$]; $rp[j] \gets rc[k,2]$, where $k = k(j)$ is such that $j \in
        rc[k,:]$. 
        \State $yp$: arb[$2^{-lnp}$]
        \State $ycp$: arb[$2^{-lnp}, n$]
        \State $g_0(\cdot), g_1(\cdot)$: polynomials of degree $deg$
        \State $h(\cdot)$: polynomial of degree $2 deg$
        \State $lin(\cdot)$: linear function (polynomial of degree~$1$)
        \Statex
        \Let{$tops$}{$0$}
        \For{$k_0 \gets 1 \mbox{ to } N$} 
        \Let{$(g_0, lin)$}{\Call{San}{$f_{k_0}$, $sm[k_0,:]$, $deg$}}
        
        \Let{$yp[k_2]$}{$(lin(p[k_2]))^{pow[n]}$, $k_2 = 1, \ldots,   2^{-lnp}$}
        \Let{$ycp[k_2,k_1]$}{$(lin(cp[k_2]))^{pow[k_1]}$, $k_2=1, \ldots, 2^{-lnp}$, $k_1 = 1, \ldots,n-1$}

        \Let{$tops[k_3,k_2,1]$}{$rmm[k_3,cc[k_0]] \cdot g_0(cp[k_2])\cdot
        ycp[k_2,1]$,  $k_2 = 1 \ldots 2^{-lnp}$, $k_3 = 1 \ldots u $ }

      
        \Let{$g_1$}{$g^\prime \cdot lin -(2\beta+d) \cdot sm[k_0,3] \cdot  g_0  $}

        \For{$k_3 \gets 1 \mbox{ to } u$}
        \If{$rmm[k_3,cc[k_0]]$}

        \Let{$h$}{$g_1\cdot f_{k_3} - f_{k_3}^\prime \cdot lin \cdot g_0$} 
        \For{$k_1 \gets 2 \mbox{ to } n-1$}
        \For{$k_2 \gets 1 \mbox{ to } 2^{-lnp}$}
            \Let{$tops[k_3,k_2,k_1]$}{$tops[k_3,k_2,k_1] + 
            h(cp[k_2])\cdot ycp[k_2,k_1]$}
       \EndFor

       \Let{$h$}{$ sm[k_0,3] \cdot pow[k_1] \cdot h + h^\prime \cdot lin $}
       \EndFor
       \For{$k_2 \gets 1 \mbox{ to } 2^{-lnp}$}
        \Let{$tops[k_3,k_2,k_1]$}{$tops[k_3,k_2,k_1] + h(p[k_2])\cdot yp[k_2]$}
       \EndFor
       \EndIf
       \EndFor
       \EndFor
       \State \Return{$tops$}
       \EndFunction
       \Function{San}{$f$, $(a,b,c,d)$, $deg$} \Comment{Returns the
       numerator of the function $f\bigl(\frac{ax +b}{cx+d}\bigr)$.  }
       \Let{$qc$[$deg+1$]}{Coefficients of $f$}
       \Let{$lin(x)$}{$cx+d$}
       \Let{$g_0(x)$}{$\sum\limits_{j=0}^{deg} qc[j]\cdot  (ax+b)^j \cdot (cx+d)^{deg-j} $}
       \smallskip
       \State\Return{$(g,lin)$}
       \EndFunction
    \end{algorithmic}
\end{algorithm}

\begin{algorithm}
    \caption[Checking that approximating polynomials are all positive]{Checking
    that approximating polynomials $f_j$, $j=1,\dots,u$ are all positive.}
    \label{alg:extra}
    \begin{algorithmic}[1]
    \Procedure{AllPositive}{$f$, $lnp$, $prec$} 
    \For{$ j \gets 1 \mbox{ to } u$}
    \If{$f_j(0)<0$}
    \Let{$f_j$}{$-f_j$}
    \EndIf
    \EndFor
    \Let{$sw$}{YES}
    \Let{$lnp$}{$lnp+1$}
    \While{$sw=$ YES and $lnp > -20$} 
    \Let{$lnp$}{$lnp-1$}
    \Let{$p$}{Partition of the interval $[0,1]$ into~$2^{-lnp}$ intervals.}
    \Let{$sw$}{NO}
    \For{$ j \gets 1 \mbox{ to } u$} 
    \For{$ k \gets 1 \mbox{ to } 2^{-lnp}$}
    \Let{$sw$}{$0 \in f_j(p[k])$?} \Comment{Relying on the ball arithmetic from
    the arb library}
    \EndFor
    \EndFor
    \EndWhile
    \State \Return{$sw$, $lnp$}
    \EndProcedure
    \end{algorithmic}
\end{algorithm}

\clearpage

\end{document}